\documentclass[a4paper,10pt,leqno]{amsart}
\title[Inheritance properties  of the  Farrell-Jones Conjecture]
{Inheritance properties  of the  Farrell-Jones Conjecture for totally disconnected groups}
       \author{Bartels, A.}
       \address{Westf\"alische Wilhelms-Universit\"at M\"unster\\
               Mathematicians Institut\\
               Einsteinium.~62,
               D-48149 M\"unster, Germany}
        \email{bartelsa@math.uni-muenster.de}
        \urladdr{http://www.math.uni-muenster.de/u/bartelsa}
        \author{L\"uck, W.}
        \address{Mathematicians Institut der Universit\"at Bonn\\
                Endenicher Allee 60\\
                53115 Bonn, Germany}
         \email{wolfgang.lueck@him.uni-bonn.de}
          \urladdr{http://www.him.uni-bonn.de/lueck}
         \date{June, 2023}
         \keywords{$K$-theoretic Farrell-Jones Conjecture, Hecke algebras of totally disconnected groups,
         inheritance to closed subgroups.}
     \makeatletter
     \@namedef{subjclassname@2020}{%
  \textup{2020} Mathematics Subject Classification}
\makeatother
\subjclass[2020]{18F25, 20C08}


  \usepackage[utf8]{inputenc}
  \usepackage{hyperref}
  \usepackage{comment}
  \usepackage{calc}
  \usepackage{enumerate,amssymb,bm}
  \usepackage[arrow,curve,matrix,tips,2cell]{xy}
    \SelectTips{eu}{10} \UseTips
    \UseAllTwocells
  \usepackage{tikz}
  \usetikzlibrary{decorations.pathreplacing}
  \usepackage{color}
  \usepackage{scalerel}
  \usepackage{braket}
  \usepackage{mathtools}
  \usepackage{bbm}
  \usepackage{enumitem}
  \usepackage{float}
  \DeclareMathAlphabet{\matheurm}{U}{eur}{m}{n}


\DeclareMathAlphabet{\matheurm}{U}{eur}{m}{n}


\newcommand{\contrCatUcoef}[3]{#3_{#1}(#2)}

\newcommand{\MODcat}[1]{#1\text{-}\matheurm{Mod}}

\newcommand{\OrG}[1]{\matheurm{Or}(#1)}

\newcommand{\OrsmG}[1]{\matheurm{Or}_{\operatorname{sm}}(#1)}

\newcommand{\Spaces}{\matheurm{Spaces}}

\newcommand{\Spectra}{\matheurm{Spectra}}

\newcommand{\SubsmG}[1]{\matheurm{Sub}_{\operatorname{sm}}(#1)}
\newcommand{\Sub}{\matheurm{Sub}}






\newcommand{\twovec}[2]{{\begin{psmallmatrix}{#1}\\{#2}\end{psmallmatrix}}}


\DeclareMathOperator{\aut}{aut}

\DeclareMathOperator{\cent}{cent}

\DeclareMathOperator{\cok}{cok}
\DeclareMathOperator{\colim}{colim}
\DeclareMathOperator*{\colimunder}{colim}

\DeclareMathOperator{\conhom}{conhom}

\DeclareMathOperator{\GL}{GL}

\DeclareMathOperator{\id}{id}
\DeclareMathOperator{\im}{im}

\DeclareMathOperator{\ind}{ind}

\DeclareMathOperator{\Idem}{Idem}

\DeclareMathOperator{\Kgroup}{K}
\DeclareMathOperator{\map}{map}
\DeclareMathOperator{\mor}{mor}

\DeclareMathOperator{\ob}{ob}

\DeclareMathOperator{\pr}{pr}
\DeclareMathOperator{\res}{res}

\DeclareMathOperator{\sm}{sm}

\DeclareMathOperator{\supp}{supp}


\newcommand{\COM}{{\calc\hspace{-1pt}\mathrm{om}}}

\newcommand{\COP}{{\calc\hspace{-1pt}\mathrm{op}}}

\newcommand{\CVCYC}{{\calc\hspace{-1pt}\mathrm{vcy}}}

\newcommand{\FIN}{{{\mathcal F}\mathrm{in}}}


  \newcommand{\IC}{\mathbb{C}}

  \newcommand{\IQ}{\mathbb{Q}}

  \newcommand{\IZ}{\mathbb{Z}}


  \newcommand{\EA}{\matheurm{A}}


  \newcommand{\cala}{\mathcal{A}}
  \newcommand{\calb}{\mathcal{B}}
  \newcommand{\calc}{\mathcal{C}}

  \newcommand{\calh}{\mathcal{H}}

  \newcommand{\cals}{\mathcal{S}}


  \newcommand{\bfA}{\mathbf{A}}
  \newcommand{\bfa}{\mathbf{a}}
  \newcommand{\bfB}{\mathbf{B}}

  \newcommand{\bfE}{\mathbf{E}}

  \newcommand{\bfK}{\mathbf{K}}

  \newcommand{\bfU}{\mathbf{U}}
  
  \newcommand{\bfV}{\mathbf{V}}

 \newcommand{\bfKinfty}{\mathbf{K}^{\infty}}




\newcommand{\EGF}[2]{E_{#2}(#1)}
\newcommand{\JGF}[2]{J_{#2}(#1)}

\newcommand{\SSETS}[1]{#1\text{-}\matheurm{SETS}_{\operatorname{sm}}}

\newcommand{\suppX}{\supp_2}
\newcommand{\suppG}{\supp_G}
\newcommand{\suppobj}{\supp_1}

\newcounter{commentcounter}


\theoremstyle{plain}
\newtheorem{theorem}{Theorem}[section]

\newtheorem{lemma}[theorem]{Lemma}

\newtheorem{proposition}[theorem]{Proposition}
\newtheorem{conjecture}[theorem]{Conjecture}
\newtheorem{assumption}[theorem]{Assumption}

\newtheorem*{theorem*}{Theorem}
\newtheorem*{theoremA*}{Theorem A}
\newtheorem*{theoremB*}{Theorem B}

\theoremstyle{definition}
\newtheorem{definition}[theorem]{Definition}

\newtheorem{example}[theorem]{Example}

\newtheorem{problem}[theorem]{Problem}

\newtheorem{remark}[theorem]{Remark}
\newtheorem{notation}[theorem]{Notation}

\newtheorem*{definition*}{Definition}

\theoremstyle{remark}

\makeatletter\let\c@equation=\c@theorem\makeatother

\theoremstyle{definition}

\newcounter{othercommentcounter}

\hyphenation{equi-variant}





\newcommand{\x}{{\times}}


\newcommand{\version}[1]              
{\begin{center} last edited on #1\\
last compiled on \today\\
name of tex-file: \jobname
\end{center}
}




  \begin{document}

  \begin{abstract}
    In this paper we formulate and lay the foundations for the $K$-theoretic Farrell-Jones
    Conjecture for the Hecke algebra of totally disconnected groups. The main  result of
    this paper is the proof that it passes to closed subgroups. Moreover, we carry out some
    constructions such as the diagonal tensor product and prove some results that will be
    used in the actual proof of the Farrell-Jones Conjecture for reductive $p$-adic
    groups, which will appear in a different paper.
  \end{abstract}

  \maketitle

\newlength{\origlabelwidth} \setlength\origlabelwidth\labelwidth


\typeout{------------------- Introduction -----------------}
\section{Introduction}\label{sec:introduction}


\subsection{The $\COP$-Farrell-Jones Conjecture for Hecke algebras of td-groups}%
\label{subsec:The_COP-Farrell-Jones_Conjecture_for_Hecke_algebras_of_td-groups}

Let $R$ be a (not necessarily commutative) associative unital ring with $\IQ \subseteq
R$. Let $G$ be a td-group i.e., locally compact second countable totally disconnected
topological Hausdorff group.  Let $\calh(G;R)$ be the associated \emph{Hecke algebra}.  We
are interested in the algebraic $K$-groups $K_n(\calh(G;R))$. In particular the projective
class group $K_0(\calh(G;R))$ is important for the theory of smooth representations of $G$
with coefficients in $R$.

The following Conjecture~\ref{con:FJC_for_Hecke_algebras} was stated
in~\cite[Conjecture~119 on page~773]{Lueck-Reich(2005)} for $R = \IC$.  A ring is called
\emph{uniformly regular}, if it is Noetherian and there exists a natural number $l$ such
that any finitely generated projective $R$-module admits a resolution by projective
$R$-modules of length at most $l$.  We write $\IQ \subseteq R$, if for any integer $n$ the
element $n \cdot 1_R$ is a unit in $R$.  Examples for uniformly regular rings $R$ with
$\IQ \subseteq R$ are fields of characteristic zero.

\begin{conjecture}[$\COP$-Farrell-Jones Conjecture for Hecke algebras]\label{con:FJC_for_Hecke_algebras}
  A td-group $G$ satisfies the \emph{$\COP$-Farrell-Jones Conjecture for Hecke algebras}
  if for every uniformly regular ring $R$ with $\IQ \subseteq R$
 the map induced by the projection $\EGF{G}{\COP} \to G/G$ induces
  for every $n \in \IZ$ an isomorphism
  \[
  H^G_n(\EGF{G}{\COP};\bfK_R) \xrightarrow{\cong} H^G_n(G/G;\bfK_R) = K_n(\calh(G;R)).
  \]
\end{conjecture}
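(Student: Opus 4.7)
The plan is to reduce the conjecture to a controlled-algebra statement and then verify that condition geometrically, following the well-established template for $K$-theoretic Farrell--Jones type assembly isomorphisms. Concretely, I would first identify the source $H^G_n(\EGF{G}{\COP};\bfK_R)$ with the $K$-theory of a ``controlled'' version of the Hecke category $\calh(G;R)$ over a suitable cocompact $G$-CW model $X$ for $\EGF{G}{\COP}$, in such a way that the assembly map induced by $X \to G/G$ corresponds to the forget-control functor into a Morita-equivalent form of $\calh(G;R)$. The identification of the target with $K_n(\calh(G;R))$ is built into the formalism of equivariant homology theories for td-groups, so the problem becomes a purely controlled-algebraic one: show that the forget-control map is a $K$-theory equivalence.

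Next, I would prove this geometrically. For the groups one most cares about (reductive $p$-adic $G$), the Bruhat--Tits building is the ideal candidate for $X$: it is finite-dimensional, CAT$(0)$, and the isotropy of the $G$-action is compact open, hence in $\COP$. Using the CAT$(0)$ geometry one constructs a transfer --- a family of $G$-equivariant self-maps of the building whose dynamics contract every morphism of $\calh(G;R)$ to arbitrarily small $X$-support --- and combines it with an Eilenberg swindle (using uniform regularity of $R$ and the hypothesis $\IQ\subseteq R$ to invert the Haar volumes of the compact open stabilizers) to conclude that the forget-control map is an isomorphism in every degree. This mirrors the strategy carried out for discrete groups by Bartels--L\"uck--Reich and successors, transported to the Hecke setting; uniform regularity eliminates spurious negative $K$-theory and allows bounded-complex reductions.

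The main obstacle, and apparently the reason for the foundational work of this paper, is building the controlled Hecke category itself. In the discrete case a morphism is a matrix with entries in $RG$ and ``support'' is a finite subset of pairs; in the Hecke case a morphism is a locally constant compactly supported measure on $G\times X\times X$ and ``support'' must be interpreted measure-theoretically, with composition by convolution against a Haar system. Establishing the requisite additivity, cofinality, Morita, and swindle properties of the $K$-theory of such categories, and proving that they behave well under the constructions invoked above --- in particular, induction from closed subgroups (the main result of this paper) and the diagonal tensor product --- is the technical heart of the programme. Once those foundations are in place, the transfer-plus-swindle argument on the Bruhat--Tits building should go through along the lines of the discrete proofs, which is exactly what the companion paper on reductive $p$-adic groups announced in the abstract is set up to carry out.
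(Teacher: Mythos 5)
The statement you are addressing is a \emph{conjecture}, not a theorem, and the paper offers no proof of it: it is stated in the introduction and its truth for reductive $p$-adic groups is deferred to the companion paper~\cite{Bartels-Lueck(2023K-theory_red_p-adic_groups)}. There is therefore no ``paper's own proof'' to compare yours against. What can be said is that your sketch correctly describes, at a high level, the programme the authors pursue: controlled Hecke categories, a forget-control description of the assembly map, transfer plus Eilenberg swindle on the Bruhat--Tits building, and the role of uniform regularity and $\IQ \subseteq R$.

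Two substantive points, however. First, your geometric argument via the Bruhat--Tits building would at best establish the conjecture for reductive $p$-adic groups, and then, using the inheritance result of this paper, for td-groups that are, modulo a compact normal subgroup, closed subgroups of such groups. Conjecture~\ref{con:FJC_for_Hecke_algebras} as stated concerns \emph{all} td-groups, and no one claims to prove that generality; a proof proposal should make this restriction explicit. Second, and more importantly, the paper does not attack Conjecture~\ref{con:FJC_for_Hecke_algebras} directly, precisely because it does not pass to closed subgroups --- which is fatal, since closed subgroups such as Borel subgroups of a reductive $p$-adic group need not be reductive $p$-adic groups. The authors therefore introduce Hecke categories with $G$-support, state the more flexible Conjecture~\ref{con:COP-Farrell-Jones_Conjecture}, prove (Theorem~\ref{the:inheritance}) that this version \emph{does} pass to closed subgroups modulo compact normal subgroups, and then specialize coefficients (Subsection~\ref{subsec:The_Hecke_category_with_Q-support_associated_to_Hecke_algebras}, Lemma~\ref{lem_Idem(calh)_and_calb_calh}, Lemma~\ref{lem:Assumption_(ReG)_holds_for_calb(G,R,rho,omega)}) to deduce Conjecture~\ref{con:FJC_for_Hecke_algebras}. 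Your proposal skips this intermediate reformulation; it is not a technical convenience but the enabling step for the inheritance argument, and it is in fact the principal content of the present paper.
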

Here $H_n^G(-;\bfK_R)$ is a \emph{smooth $G$-homology theory} satisfying
$H_n^G(G/U;\bfK_R) \cong K_n(\calh(U;R))$ for every open subgroup $U$ of $G$ and
$n \in \IZ$, see Subsection~\ref{subsec:Smooth_G-homology_theories},
and $\EGF{G}{\COP}$  is the \emph{classifying space for proper smooth $G$-actions},
see Section~\ref{sec:The_COP_assembly_map_for_categories_with_G-support}.

The isomorphism above  yields a computation of the $K$-theory of $\calh(G;R)$ in terms of the
$K$-theory of the compact open subgroups of $G$. In particular it implies that the
canonical map induced by the various inclusions $K \subseteq G$ for the
set $\COP$ of compact open subgroups of $G$

\begin{equation}
  \bigoplus_{K \in \COP} K_0(\calh(K;R))  \to K_0(\calh(G;R))
  \label{induction_direct_sums}
\end{equation}
is  surjective. This  and further consequences of  Conjecture~\ref{con:FJC_for_Hecke_algebras}
will be discussed in Theorem~\ref{the:K_0_and_negative_K-theory_for_Hecke_algebras}.

We will show in~\cite[Cor.~1.8]{Bartels-Lueck(2023K-theory_red_p-adic_groups)} that
Conjecture~\ref{con:FJC_for_Hecke_algebras} is true if $G$ is a reductive $p$-adic group.

Dat~\cite{Dat(2007)} has shown that the map~\eqref{induction_direct_sums} is rationally
surjective for $G$ a reductive $p$-adic group and $R=\IC$.  In particular, the cokernel of
it is a torsion group.  Dat~\cite[Conj.~1.11]{Dat(2003)} conjectured that this cokernel is
$\widetilde w_G$-torsion.  Here $\widetilde w_G$ is a certain multiple of the order of the
Weyl group of $G$.  Dat proved this conjecture for
$G = \GL_n(F)$~\cite[Prop.~1.13]{Dat(2003)} and asked about the integral version, see the
comment following~\cite[Prop.~1.10]{Dat(2003)}. A consequence of the proof of
Conjecture~\ref{con:FJC_for_Hecke_algebras} is that the integral version is true.


\subsection{The $\COP$-Farrell-Jones Conjecture for Hecke categories with $G$-support}%
\label{subsec:The_COP-Farrell-Jones_Conjecture_for_Hecke_categories_with_G-support}

The Farrell-Jones Conjecture~\ref{con:FJC_for_Hecke_algebras} for Hecke algebras does not
pass to subgroups.  Note  that subgroups are always understood to be closed.
It is interesting to have this inheritance to subgroups,
since important subgroups of a reductive $p$-adic group such that the Borel subgroup are
not necessarily reductive $p$-adic groups again. Therefore we develop in this paper a more
general version of the Farrell-Jones Conjecture~\ref{con:FJC_for_Hecke_algebras}, which
has the inheritance to closed subgroups more or less built in and for which the proof of
the Farrell-Jones Conjecture for reductive $p$-adic groups
in~\cite{Bartels-Lueck(2023K-theory_red_p-adic_groups)} carries over. The idea is to allow
more general coefficients than just a ring $R$.

We will introduce in Definition~\ref{def:category_with_G-support} the notion of a \emph{category
  with $G$-support} $\calb$ and associate to it a $G$-homology theory $H_n^G(\--;\bfKinfty_{\calb})$
in Section~\ref{sec:The_smooth_K-theory_spectrum_and_the_associated_smooth_G-homology_theory}.
Then one can consider the assembly map 
\[
      H_n^G(\EGF{G}{\COP};\bfKinfty_{\calb}) \to H_n^G(G/G;\bfKinfty_{\calb}) =
      \pi_n(\bfKinfty(\calb_{\oplus}))
    \]
and ask whether it is bijective for all $n \in \IZ$. In this generality this is not true.
However, if one uses the stronger notion   of a \emph{Hecke category with $G$-support} of
Definition~\ref{def:Hecke_categories_with_G-support} and requires a regularity assumption,
then the following version is realistic.

\begin{conjecture}[The $\COP$-Farrell-Jones Conjecture]%
\label{con:COP-Farrell-Jones_Conjecture}
A td-group $G$ satisfies the \emph{$\COP$-Farrell-Jones Conjecture} if for every Hecke
category with $G$-support $\calb$ satisfying condition (Reg), 
see Definition~\ref{def:(Reg)_for_calb}, the $\COP$-assembly map induced by the projection
$\EGF{G}{\COP} \to G/G$
    \[
      H_n^G(\EGF{G}{\COP};\bfKinfty_{\calb}) \to H_n^G(G/G;\bfKinfty_{\calb}) =
      \pi_n(\bfKinfty(\calb_{\oplus}))
    \]
    is bijective for all $n \in \IZ$.
  \end{conjecture}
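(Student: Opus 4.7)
The plan is to reduce the statement to a controlled $K$-theory version of the assembly map, in the spirit of the Farrell-Jones program but adapted to the smooth/td setting and to Hecke-category coefficients. First I would fix a concrete geometric model $X$ for $\EGF{G}{\COP}$ that admits a proper smooth $G$-action with compact open stabilizers and carries a useful control structure; for a reductive $p$-adic group $G$ the natural candidate is the enlarged Bruhat-Tits building, a finite dimensional CAT(0) space with precisely the stabilizer properties required. Using the description of $H_n^G(-;\bfKinfty_{\calb})$ to be developed in Section~\ref{sec:The_smooth_K-theory_spectrum_and_the_associated_smooth_G-homology_theory}, I would then identify the assembly map with a forget-control map between $K$-theory spectra of categories of $X$-controlled objects with values in $\calb$.

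Next I would attack this forget-control map by the standard two-sided argument from the Farrell-Jones literature. Concretely, one constructs a homotopy coherent $G$-action on a suitable thickening of $X$ or, equivalently, an equivariant open cover of $G \times X$ of bounded dimension whose members remain small in the $X$-direction, together with a contracting flow. These combine into a transfer map $\bftrf$ such that $\bftrf$ composed with the assembly map, in either order, is controlled-homotopic to the identity, giving bijectivity. The regularity hypothesis (Reg) on $\calb$ plays its usual role: it kills potential $\bfNK$-contributions and permits free passage between connective and non-connective $K$-theory, so that the target genuinely behaves like a generalized smooth $G$-homology theory and a Mayer-Vietoris/covering argument applies on the source.

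The principal obstacle, as for every new instance of the Farrell-Jones Conjecture, is the geometric construction of the covers of $G \times X$: they must be simultaneously invariant under the diagonal $G$-action, of uniformly bounded dimension, and small in the fibre direction. For reductive $p$-adic groups this is feasible via the building and a flow-space construction, but each step must be verified to be compatible with a categorical coefficient system $\calb$ rather than a plain ring, and with the inheritance to closed subgroups proved in the present paper; the latter is essential since closed parabolic subgroups of a reductive $p$-adic group are not themselves reductive $p$-adic, and the whole reason for enlarging from Conjecture~\ref{con:FJC_for_Hecke_algebras} to Conjecture~\ref{con:COP-Farrell-Jones_Conjecture} is precisely to accommodate them. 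A second, more structural obstacle is to set up controlled algebra over a Hecke category with $G$-support in the first place, so that constructions such as the diagonal tensor product, the transfer, and the swindle remain well-defined; this is what the foundational work of this paper is designed to accomplish, leaving the geometric/dynamical input to the companion paper~\cite{Bartels-Lueck(2023K-theory_red_p-adic_groups)}.
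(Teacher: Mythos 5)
What you have sketched is not a proof of the statement, and no proof exists to compare it against: Conjecture~\ref{con:COP-Farrell-Jones_Conjecture} is a \emph{conjecture}, not a theorem, and the paper does not purport to prove it. The text makes this explicit: the paper's job is to \emph{formulate} the conjecture, to establish the structural machinery (categories with $G$-support, the smooth $K$-theory spectrum, the diagonal tensor product), and to prove the inheritance result Theorem~\ref{the:inheritance}; the actual verification of the conjecture for reductive $p$-adic groups is deferred to the companion paper~\cite{Bartels-Lueck(2023K-theory_red_p-adic_groups)} and is stated here only as Theorem~\ref{the:FJC_for_reductive_p-adic_groups} by citation. For general td-groups the conjecture is open.

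Your proposal is, in effect, a high-level sketch of the strategy that~\cite{Bartels-Lueck(2023K-theory_red_p-adic_groups)} is expected to carry out for reductive $p$-adic groups: a geometric model (the Bruhat--Tits building), controlled algebra over a Hecke category with $G$-support, a transfer built from equivariant covers of $G\times X$ and a flow space, and the role of (Reg) in killing $\bfNK$-contributions. As a description of the anticipated proof \emph{in that special case} it is plausible and roughly aligned with the program the present paper is setting up (Section~\ref{sec:Some_input_for_the_proof_of_the_Farrell-Jones_Conjecture} is explicitly labelled as input for that forthcoming proof). But it cannot be a proof of the conjecture as stated, since the conjecture quantifies over all td-groups $G$ and is known, even by the authors, not to follow from the techniques currently available. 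If you want to match what the paper actually proves, you should instead be addressing Theorem~\ref{the:(FJ_cs)_passes_to_closed_subgroup_modulo_compact_subgroups} together with Lemma~\ref{lem:induction_for_G-categories_with_G-action}, i.e.\ the statement that the validity of Conjecture~\ref{con:COP-Farrell-Jones_Conjecture} for $G$ implies it for any $G'$ with a compact normal subgroup $K'$ such that $G'/K'$ embeds as a closed subgroup of $G$; that is where the paper supplies an actual argument.
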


We will show in~\cite[Thm.~1.11]{Bartels-Lueck(2023K-theory_red_p-adic_groups)}.

  \begin{theorem}\label{the:FJC_for_reductive_p-adic_groups}
    Every reductive $p$-adic group satisfies the
    $\COP$-Farrell-Jones Conjecture~\ref{con:COP-Farrell-Jones_Conjecture}.
  \end{theorem}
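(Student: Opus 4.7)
The plan is to transport to the td-setting the geometric strategy that has been successful for Farrell--Jones type statements for discrete groups acting nicely on CAT(0)-spaces, exploiting the Bruhat--Tits building $X = X(G)$ of a reductive $p$-adic group $G$. The building is a finite-dimensional CAT(0)-polyhedron on which $G$ acts properly, cocompactly and smoothly by isometries, with every point stabilizer lying in the family $\COP$ of compact open subgroups. It is therefore a natural geometric model on which to mount a controlled algebra proof of Conjecture~\ref{con:COP-Farrell-Jones_Conjecture}.

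First I would reformulate the bijectivity of the assembly map
\[
H_n^G(\EGF{G}{\COP};\bfKinfty_{\calb}) \to H_n^G(G/G;\bfKinfty_{\calb})
\]
for an arbitrary Hecke category with $G$-support $\calb$ satisfying (Reg) as an axiomatic criterion of transfer reducibility relative to $\COP$. Concretely, one asks for a $G$-space $Z$ together with, for every $\alpha>0$, an open $G$-invariant cover $\calu_\alpha$ of $G\times Z$ of bounded covering dimension whose members are ``$\alpha$-long in the flow direction'' and whose isotropy lies in $\COP$. The formalism set up in the present paper (smooth $G$-homology theories, the $\COP$-assembly map, regularity for categories with $G$-support) is tailored so that the discrete-group derivation of Farrell--Jones-type statements from such covers goes through verbatim, provided one works with smooth $G$-CW approximations and obstruction categories defined via the Hecke structure on $\calb$.

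Second I would construct the required covers from the CAT(0) geometry of $X$. One first replaces $X$ by an associated flow space $FS(X)$ parametrizing generalized geodesics in $X$, endowed with the geodesic flow; the cocompactness of the $G$-action on $X$ and the contractibility of links control the dynamics of this flow. Adapting the long thin cover construction from the CAT(0)-group setting (carefully replacing cell-wise arguments with smooth-$G$-set arguments) yields, for every $\alpha>0$, the required $\calu_\alpha$ on $G\times FS(X)$. The inheritance results of the present paper are used at this stage to pass to the compact open isotropy groups of the cover and to handle subgroups of parabolic type that arise in the induction.

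The main obstacle is this cover construction in the td-setting: equivariance is now required under a non-discrete topological group, and one must exhibit members of the cover whose stabilizers are compact open, not merely compact. Making the construction smooth and $\COP$-equivariant while preserving the long-thinness and the bounded dimension is the delicate technical heart of the argument, and it is precisely where the framework of Hecke categories with $G$-support developed here, together with the closed-subgroup inheritance, becomes indispensable. Once the covers are in place, the deduction of Theorem~\ref{the:FJC_for_reductive_p-adic_groups} from Conjecture~\ref{con:COP-Farrell-Jones_Conjecture} reduces to the standard controlled algebra machinery applied to $\bfKinfty(\calb_\oplus)$.
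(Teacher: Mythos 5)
The present paper does not prove Theorem~\ref{the:FJC_for_reductive_p-adic_groups}; it defers to~\cite[Thm.~1.11]{Bartels-Lueck(2023K-theory_red_p-adic_groups)} and only supplies inputs for that argument. Your high-level template --- a controlled algebra / transfer reducibility argument over the Bruhat--Tits building, passage to a flow space, long-thin smooth $G$-invariant covers with $\COP$-isotropy, and an obstruction-category reduction --- is indeed the expected shape of the proof, and you correctly identify that making the cover construction equivariant under a non-discrete $G$ with compact open (not just compact) stabilizers is the delicate point. But as written this is a strategy outline, not a proof: no covers are constructed, no axiomatic criterion is stated, no reduction is carried out.

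Beyond its vagueness, two of your pointers are misplaced. Invoking Theorem~\ref{the:inheritance} ``to handle subgroups of parabolic type that arise in the induction'' is circular: the inheritance theorem is a companion result that \emph{follows} once the reductive case is established, and using it as an ingredient to prove that same case begs the question. More importantly, you do not engage with the actual transfer construction, which Remark~\ref{rem:(FJ)_(cs)} singles out as the step that fails for general categories with $G$-support and necessitates the Hecke axioms. The concrete input this paper supplies for the transfer is the diagonal tensor product of Section~\ref{sec:Some_input_for_the_proof_of_the_Farrell-Jones_Conjecture}: the bilinear pairing $\cals^G(\Omega)\times\calb_G(\Lambda)\to\Idem(\calb_G(\Omega\times\Lambda))$ of~\eqref{final_diagonal_pairing}, whose compatibility with composition (Lemma~\ref{lem:the_diagonal_tensor_product_is_compatible_with_composition}) hinges precisely on the \emph{Morphism Additivity}, \emph{Translation}, and \emph{Support cofinality} axioms of Definition~\ref{def:Hecke_categories_with_G-support}. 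Any honest reconstruction of the proof must explain how this pairing produces the transfer maps over the flow space of the building; without that, the proposal remains a plausible-looking plan rather than an argument.
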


  Given a uniformly regular ring $R$ with $\IQ \subseteq R$, one can construct a Hecke
  category with $G$-support $\calb$ satisfying the condition  (Reg),
see Definition~\ref{def:(Reg)_for_calb}, such that the assembly map appearing in
  Conjecture~\ref{con:COP-Farrell-Jones_Conjecture} is the assembly map appearing in
  Conjecture~\ref{con:FJC_for_Hecke_algebras}. Hence
  Conjecture~\ref{con:COP-Farrell-Jones_Conjecture} implies
  Conjecture~\ref{con:FJC_for_Hecke_algebras}. All this is explained in
  Section~\ref{subsec:The_Hecke_category_with_Q-support_associated_to_Hecke_algebras}.


\subsection{The main theorem about inheritance to subgroups}%
\label{subsec:The_main_theorem_about_inheritance_to_subgroups}

The main theorem of this papers is

\begin{theorem}\label{the:inheritance}
  Suppose that the $\COP$-Farrell-Jones Conjecture~\ref{con:COP-Farrell-Jones_Conjecture} holds for
  the td-group $G$.

  Then it also holds for every  td-group $G'$ that contains a (not
  necessarily open) normal compact subgroup $K' \subseteq G'$ such that $G'/K'$ is
  isomorphic to some  subgroup of $G$.
\end{theorem}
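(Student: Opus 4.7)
The plan is to decompose the theorem into two separate inheritance statements. Set $H := G'/K'$, which by hypothesis is a closed subgroup of $G$. Then passing from $G$ to $G'$ factors as first passing from $G$ to the closed subgroup $H$, and then passing from $H$ to the compact extension $1 \to K' \to G' \to H \to 1$. Each step can be proved on its own; combining them gives the theorem.

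For the closed-subgroup step, the key construction is an induction functor $\ind_H^G$ that sends a Hecke category with $H$-support $\calb$ satisfying (Reg) to a Hecke category with $G$-support $\ind_H^G \calb$, with the same underlying additive coefficient category but with the supports pushed forward along the inclusion $H \hookrightarrow G$. Condition (Reg) is inherited since it depends only on the coefficient category, which is untouched. Next I would establish a Shapiro-type induction isomorphism
\[
H_n^G\bigl(G \times_H Y; \bfKinfty_{\ind_H^G \calb}\bigr) \;\cong\; H_n^H\bigl(Y; \bfKinfty_\calb\bigr)
\]
for smooth $H$-spaces $Y$. Applied to $Y = H/H$ this identifies the targets of the $G$- and $H$-assembly maps, while applied to (a thickening of) a model for $\EGF{H}{\COP}$ it identifies the sources. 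The hypothesis that $G$ satisfies Conjecture~\ref{con:COP-Farrell-Jones_Conjecture} with coefficients $\ind_H^G\calb$ then forces the $H$-assembly map for $\calb$ to be an isomorphism.

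For the compact-extension step, let $q\colon G' \to H$ denote the quotient map. The starting observation is that for any compact open $U' \subseteq G'$ the product $K' U'$ is again compact open in $G'$ and coincides with $q^{-1}(q(U'))$; hence compact open subgroups of $G'$ of the form $q^{-1}(V)$ with $V$ compact open in $H$ are cofinal, and one obtains a $G'$-equivariant equivalence $\EGF{G'}{\COP} \simeq q^{*}\EGF{H}{\COP}$. Given a Hecke category with $G'$-support $\calb'$ satisfying (Reg), averaging the smooth structure over the compact normal subgroup $K'$ produces a Hecke category with $H$-support whose $\bfKinfty$-spectrum agrees (up to weak equivalence) with that of $\calb'$ and which satisfies (Reg); under the above equivalence of classifying spaces, the $G'$- and $H$-assembly maps are identified.

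The principal obstacle, in both steps, is to fit the induction and $K'$-averaging constructions cleanly into the axiomatic framework of Hecke categories with $G$-support satisfying (Reg), and in particular to deal with the fact that $\res_H^G\EGF{G}{\COP}$ has stabilizers of the form $H \cap K$ for compact open $K \subseteq G$, which are compact in $H$ but need not be open. Some thickening (or passage to a cofinal subfamily within the axiomatic setup) will be required before the Shapiro-type identification can be applied. Once the induction isomorphism, the classifying-space identifications, and the cofinality of $K'$-stable compact open subgroups are in place, the theorem follows by chasing the resulting commutative squares of assembly maps.
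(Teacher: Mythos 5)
The overall two-step decomposition (closed subgroup, then compact quotient) is logically sound, and your second step is essentially what the paper does; but the first step has a genuine gap, because you have the adjunction pointing the wrong way. With the Shapiro isomorphism $H_n^G(G\times_H Y;\bfKinfty_{\ind_H^G\calb})\cong H_n^H(Y;\bfKinfty_\calb)$, taking $Y=H/H$ yields $G\times_H(H/H)=G/H$, so you identify $H_n^H(H/H;\bfKinfty_\calb)$ with $H_n^G(G/H;\bfKinfty_{\ind_H^G\calb})$ --- not with $H_n^G(G/G;\bfKinfty_{\ind_H^G\calb})$, which is the actual target of the $G$-assembly map. Likewise $G\times_H\EGF{H}{\COP}$ is not a model for $\EGF{G}{\COP}$: its isotropy groups are $G$-conjugates of compact open subgroups of $H$, which are compact but need not be open in $G$, so no amount of ``thickening'' makes this a proper smooth $G$-CW-complex, let alone a model for $\EGF{G}{\COP}$. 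So this adjunction simply does not translate the $G$-assembly map into the $H$-assembly map. The $(\mathrm{Reg})$ claim is also not quite right as stated: the condition is not a property of the bare coefficient $\IZ$-category but of $\calb[G/K]_{\oplus}[\IZ^d]$ for all compact open $K\subseteq G$; one must decompose $\res^G_H(G/K)$ into proper smooth $H$-orbits and apply $(\mathrm{Reg})$ for $\calb$ orbitwise, which is where compactness of the kernel is actually used.

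The correct adjunction, and the one the paper uses, goes via \emph{restriction of spaces and pushforward of coefficients}. For $\alpha\colon G'\to G$ a homomorphism with compact kernel and closed image, the paper constructs a category with $G$-support $\ind_\alpha\calb$ from a category with $G'$-support $\calb$ (Subsection~\ref{subsec:The_definition_of_the_induced_category_with_G'-support}) and proves an adjunction isomorphism $H_n^{G'}(\alpha^*X;\bfKinfty_\calb)\cong H_n^{G}(X;\bfKinfty_{\ind_\alpha\calb})$ natural in the smooth $G$-CW-complex $X$ (Theorem~\ref{the:Adjunction_Theorem_for_categories_with_G-support}, built from Proposition~\ref{pro:weak_equivalence_U} and Theorem~\ref{the:Adjunction_Theorem_for_spectra}). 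Crucially $\alpha^*\EGF{G}{\COP}$ \emph{is} a model for $\EGF{G'}{\COP}$ --- one passes through the family $\COM$ of all compact subgroups, where compactness of $\ker\alpha$ makes the comparison immediate, and then uses $\EGF{-}{\COP}\simeq\EGF{-}{\COM}$ --- and $\alpha^*(G/G)=G'/G'$. That yields the commutative square with vertical isomorphisms in Theorem~\ref{the:(FJ_cs)_passes_to_closed_subgroup_modulo_compact_subgroups}~\ref{the:(FJ_cs)_passes_to_closed_subgroup_modulo_compact_subgroups:FJ_cs}. The paper also does not need your two-step decomposition: applying this machinery directly to the single homomorphism $\alpha\colon G'\to G'/K'\cong H\hookrightarrow G$, which has compact kernel $K'$ and closed image $H$, together with Lemma~\ref{lem:induction_for_G-categories_with_G-action} to see that $\ind_\alpha$ preserves the Hecke-category structure, gives Theorem~\ref{the:inheritance} in one stroke.
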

  Theorem~\ref{the:inheritance} will follow from
  Theorem~\ref{the:(FJ_cs)_passes_to_closed_subgroup_modulo_compact_subgroups} and
  Lemma~\ref{lem:induction_for_G-categories_with_G-action}.


\subsection{Some input for the proof of the $\COP$-Farrell-Jones Conjecture for Hecke algebras of td-groups}%
\label{subsec:Some_input_for_the_proof_of_the_COP-Farrell-Jones_Conjecture_for_Hecke_algebras_of_td-groups}

In Section~\ref{sec:Some_input_for_the_proof_of_the_Farrell-Jones_Conjecture} we present
some constructions and results which will be needed in the proof of
Conjecture~\ref{con:COP-Farrell-Jones_Conjecture} for every reductive $p$-adic group
in~\cite{Bartels-Lueck(2023K-theory_red_p-adic_groups)}. There we mainly deal with the
construction and the main properties of the so called \emph{diagonal tensor product}.


\subsection{Acknowledgments}\label{subsec:Acknowledgements}

The paper is funded by the ERC Advanced Grant \linebreak ``KL2MG-interactions'' (no.
662400) of the second author granted by the European Research Council, by the Deutsche
Forschungsgemeinschaft (DFG, German Research Foundation) under Germany's Excellence
Strategy \--- GZ 2047/1, Projekt-ID 390685813, Hausdorff Center for Mathematics at Bonn,
and by the Deutsche Forschungsgemeinschaft (DFG, German Research Foundation) under
Germany's Excellence Strategy EXC 2044 \--- 390685587, Mathematics M\"unster: Dynamics
\--- Geometry \--- Structure.

The paper is organized as follows:
\tableofcontents


\typeout{------------- Section 2: The smooth $K$-theory spectrum ----------------------}

\section{The smooth $K$-theory spectrum and the associated smooth $G$-homology theory}%
\label{sec:The_smooth_K-theory_spectrum_and_the_associated_smooth_G-homology_theory}

\subsection{The definition of a category with $G$-support}%
\label{subsec:The_definition_of_a_category_with_G-support}

A \emph{$\IZ$-category} is
a small category $\cala$ enriched over the category of $ \IZ$-modules, i.e., for every
two objects $A$ and $A'$ in $\cala$ the set of morphisms $\mor_{\cala}(A,A')$ has the
structure of a $\IZ$-module such that composition is a $\IZ$-bilinear map.

\begin{definition}\label{def:category_with_G-support} Let $G$ be a td-group.  A \emph{category with
  $G$-support} is a $\IZ$-category $\calb$ together with a map that assigns to every
morphism $\varphi$ in $\calb$ a compact subset $\supp_{\calb}(\varphi)$ of $G$, often denoted
by $\supp(\varphi)$ for short.
For  $B \in \calb$ we set $\supp(B) := \supp(\id_B)$.

We require the following axioms for 
any  object $B$, and any morphisms $\varphi , \varphi' \colon B \to B'$:
  \begin{enumerate}
  \item\label{def:category_with_G-support:(emptyset)}
    $\supp(\varphi) = \emptyset \Longleftrightarrow \varphi = 0$; 
  \item\label{def:category_with_G-support:(v_circ_u)}
    $\supp(\varphi' \circ \varphi) \subseteq \supp(\varphi') \cdot \supp(\varphi)$;
  \item\label{def:category_with_G-support:(plus)}
    $\supp(\varphi+\varphi') \subseteq \supp(\varphi) \cup \supp(\varphi')$ and
    $\supp(-\id_B) = \supp(B)$.
          \end{enumerate}
\end{definition}

We are \emph{not} requiring that
$\supp(s) \subseteq G$ is open, since later we want to allow not necessarily open group
homomorphism $\alpha \colon G \to G'$ in
Theorem~\ref{the:(FJ_cs)_passes_to_closed_subgroup_modulo_compact_subgroups}.


\subsection{The smooth $K$-theory spectrum associated to a category with $G$-support}%
\label{subsec:The_smooth_K-theory_Or(G)-spectrum_associated_to_a_category_with_G_suppport}

Let $\calb$ be a category with $G$-support in the sense of
Definition~\ref{def:category_with_G-support}.  Let $S$ be a \emph{smooth $G$-set}, i.e.,
a $G$-set such that the isotropy group of each point in $S$ is an open subgroup of $G$.
Define the $\IZ$-category $\calb[S]$ as follows. Objects are pairs $(x,B)$ consisting of an
element $x \in S$ and an object $B \in \calb$ such that $\supp(B) \subseteq G_x$. A
morphism $\varphi \colon (x,B) \to (x',B')$ is a morphism $\varphi \colon B \to B'$ in $\calb$
satisfying $\supp(\varphi) \subseteq G_{x,x'} := \{g \in G \mid x' = gx\}$.  Composition is
given by the composition in $\calb$.  The identity morphism $\id_B$ yields the identity
morphism $(x,B) \to (x,B)$ in $\calb[S]$. The structure of a $\IZ$-category on $\calb$
induces the structure of a $\IZ$-category on $\calb[S]$. 

Let $f \colon S \to S'$ be a $G$-map. It induces a functor of $\IZ$-categories
$\calb[f] \colon \calb[S] \to \calb[S']$ by sending $(x,B)$ to $(f(x),B)$ and a morphism
$\varphi \colon (x,B) \to (x',B')$ given by a morphism $\varphi \colon B \to B'$ in
$\calb$ to the morphism $\varphi \colon (f(x),B) \to (f(x'),B')$ given by
$\varphi \colon B \to B'$ in $\calb$ again.

Given a $\IZ$-category $\cala$, one can associate to it an additive $\IZ$ category
$\cala_{\oplus}$ with functorial finite sums.  One can assign to any additive category
$\cala$ its non-connective $K$-theory spectrum $\bfKinfty(\cala)$.  All these classical
notions are summarized with references to the relevant papers in~\cite[Section~2
and~3]{Bartels-Lueck(2020additive)}.

\begin{definition}[The smooth  $K$-theory spectrum]\label{def:Smooth_K-theory_spectrum}
  We obtain a functor called the \emph{smooth $K$-theory spectrum associated to the
    category with $G$-support $\calb$} with the category of smooth $G$-sets as source
  \[
    \bfKinfty_{\calb} \colon \SSETS{G} \to \Spectra, \quad S \mapsto
    \bfKinfty(\calb[S]_{\oplus}).
  \]
\end{definition}

\begin{lemma}\label{lem:additivity_of_the_smooth_spectrum}
  Let $S$ be a smooth $G$-set. For an orbit $O \subseteq G\backslash S$, let $j_O \colon O \to S$
  be the inclusion of  $G$-sets.

  Then the induced map
  \[
    \bigvee_{O \in G\backslash  S} \bfKinfty_{\calb}(j_O) \colon \bigvee_{O\in G\backslash S}
    \bfKinfty_{\calb}(O) \to \bfKinfty_{\calb}(S)
  \]
  is a weak homotopy equivalence of spectra.
\end{lemma}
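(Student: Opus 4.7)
The plan is to exploit the fact that, thanks to axiom~\ref{def:category_with_G-support:(emptyset)} in Definition~\ref{def:category_with_G-support}, the category $\calb[S]$ has no non-zero morphisms between objects lying over different orbits, and then to combine the additivity and continuity properties of non-connective $K$-theory of additive categories.

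First I would check the following key observation. If $\varphi \colon (x,B) \to (x',B')$ is a morphism in $\calb[S]$ with $x,x'$ in different $G$-orbits of $S$, then $G_{x,x'} = \emptyset$ and hence $\supp(\varphi) \subseteq G_{x,x'} = \emptyset$, forcing $\varphi = 0$ by Definition~\ref{def:category_with_G-support}~\ref{def:category_with_G-support:(emptyset)}. Consequently the full subcategories $\calb[O] \subseteq \calb[S]$ for $O \in G\backslash S$ are pairwise Hom-orthogonal, so that on the level of $\IZ$-categories $\calb[S] = \coprod_{O \in G\backslash S} \calb[O]$.

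Next I would describe $\calb[S]_{\oplus}$ as a filtered colimit indexed by the poset $\cali$ of finite sub-$G$-sets $T \subseteq S$ (equivalently, finite unions of orbits): any object of $\calb[S]_{\oplus}$ is a finite formal sum $\bigoplus_i (x_i,B_i)$ of objects of $\calb[S]$, which already lies in $\calb[T]_{\oplus}$ for $T = Gx_1 \cup \cdots \cup Gx_n$, and similarly any morphism in $\calb[S]_{\oplus}$ is supported on finitely many orbits. Thus
\[
\calb[S]_{\oplus} \;=\; \colim_{T \in \cali} \calb[T]_{\oplus}
\]
as additive categories. Moreover for each $T = O_1 \sqcup \cdots \sqcup O_k \in \cali$, the orthogonality above implies that $\calb[T]_{\oplus}$ is canonically equivalent to the product (equivalently coproduct) of additive categories $\calb[O_1]_{\oplus} \times \cdots \times \calb[O_k]_{\oplus}$.

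Finally I would apply $\bfKinfty$. Standard properties of non-connective $K$-theory of additive categories (as recalled in the references of~\cite{Bartels-Lueck(2020additive)}) give (a) continuity, i.e.\ $\bfKinfty$ commutes with filtered colimits of additive categories, and (b) additivity, i.e.\ for a finite product of additive categories the canonical map $\bigvee_j \bfKinfty(\cala_j) \to \bfKinfty(\prod_j \cala_j)$ is a weak equivalence of spectra. Combining these,
\[
\bfKinfty(\calb[S]_{\oplus}) \;\simeq\; \colimunder_{T \in \cali} \bigvee_{O \subseteq T} \bfKinfty(\calb[O]_{\oplus}) \;\simeq\; \bigvee_{O \in G\backslash S} \bfKinfty(\calb[O]_{\oplus}),
\]
where in the last step we use that the wedge is the coproduct in spectra and hence commutes with filtered colimits, so that the filtered colimit of finite sub-wedges of a fixed family is the total wedge. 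Chasing through the construction shows this composite equivalence is exactly $\bigvee_O \bfKinfty_{\calb}(j_O)$.

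The only real content is the Hom-orthogonality observation; the rest is formal. The step most likely to need care is making the filtered-colimit presentation $\calb[S]_{\oplus} = \colim_T \calb[T]_{\oplus}$ precise, since one must verify that both objects and morphisms in the additive completion factor through $\calb[T]_{\oplus}$ for some finite $T$, and that $\bfKinfty$ indeed commutes with such colimits in the form needed here.
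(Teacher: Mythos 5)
Your proposal is correct and rests on exactly the same core observation as the paper's proof: axiom~\ref{def:category_with_G-support:(emptyset)} forces $\mor_{\calb[S]}\bigl((x,B),(x',B')\bigr)=\{0\}$ when $x,x'$ lie in different orbits, and therefore $\calb[S]_{\oplus}$ decomposes as a direct sum of additive categories $\bigoplus_{O\in G\backslash S}\calb[O]_{\oplus}$. Where the paper then simply cites the fact that non-connective $K$-theory of additive categories is compatible with direct sums over arbitrary index sets (Corollary~7.2 of L\"uck--Steimle), you instead re-derive that fact from two ingredients: continuity of $\bfKinfty$ under filtered colimits, applied to the presentation of $\calb[S]_{\oplus}$ as $\colim_{T}\calb[T]_{\oplus}$ over finite sub-$G$-sets $T$, together with finite additivity. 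This is in fact precisely the standard proof of the cited result, so your route is a slightly more explicit unwinding of the same argument rather than a genuinely different one; it costs a bit more bookkeeping (checking that objects and morphisms of $\calb[S]_{\oplus}$ factor through finite $T$, and that wedge commutes with filtered colimits of spectra), but makes the proof self-contained modulo the two more elementary $K$-theoretic facts.
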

\begin{proof} Let $(x,B)$ and $(x',B')$ be objects in $\calb[S]$. Then
  $\mor_{\calb[S]}((x,B),(x',B)) \not= \{0\}$ holds only, if $x$ and $x'$ belong to the
  same $G$-orbit in $S$. Therefore we obtain an equivalence of additive categories
  \begin{equation}
    \bigoplus_{O \in G\backslash S} \calb[j_O]_{\oplus}  \colon  \bigoplus_{O \in G\backslash S} \calb[O]_{\oplus}
    \to \calb[S]_{\oplus}.
    \label{decomposing_calb(S)_into_(calb(O)_for_orbits_O}
  \end{equation}
  Now the claim follows from the fact that algebraic $K$-theory of additive categories is
  compatible with direct sums over arbitrary index sets,
  see~\cite[Corollary~7.2]{Lueck-Steimle(2014delooping)}.
 \end{proof}

\begin{remark}\label{rem:why_smoothK-theory_spectrum}
  All the definitions and results of this
  Subsection~\ref{subsec:The_smooth_K-theory_Or(G)-spectrum_associated_to_a_category_with_G_suppport}
  do make sense, if one drops the condition smooth. However, then the resulting spectrum
  over the orbit category $\OrG{G}$ will turn out not to be the appropriate one, when we
  will give proofs of the Farrell-Jones Conjecture and will have to consider homogeneous
  spaces, which are not necessarily smooth, in forthcoming papers, e.g.~\cite{Bartels-Lueck(2023K-theory_red_p-adic_groups)}.
  This will actually be
  one of the main technical difficulties.  To avoid such problems, we will consider in
  this paper only smooth spaces and the smooth orbit category. This will be sufficient to
  state the Farrell-Jones Conjecture and prove some inheritance properties.
\end{remark}


\subsection{Smooth $G$-homology theories}\label{subsec:Smooth_G-homology_theories}

Let $\OrsmG{G}$  be the \emph{smooth orbit category}.
Objects are homogeneous spaces $G/H$ for $H \subseteq G$ open and
morphisms are $G$-maps.

Note that $G/H'$ is for any open subgroup $H' \subseteq G$ a
discrete space and hence $\map_G(G/H,G/H')$ carries the discrete topology for any subgroup
$H \subseteq G$. Hence we can view $\OrsmG{G}$ just as a
category without taking any topology on the set of objects or set of morphisms between
two objects into account. So in particular all the material of
Davis-L\"uck~\cite{Davis-Lueck(1998)} applies, if we take the category $\calc$ to be
$\OrsmG{G}$.

Let $X$ be a smooth $G$-$CW$-complex, i.e., a $G$-$CW$-complex, all whose isotropy groups
are open. For an introduction to $G$-$CW$-complexes we refer for instance
to~\cite[Chapter~1 and~2]{Lueck(1989)}. We can assign to $X$ a contravariant
$\OrsmG{G}$-space
\begin{equation}
  O^G(X) \colon  \OrsmG{G} \to \Spaces, \quad G/H \mapsto \map_G(G/H,X).
  \label{O_upper_G(X)}
\end{equation}

\begin{remark}[$O^G(X)$ is a $\OrsmG{G}$-$CW$-complex]\label{rem:free_orsm(G)-complex}
  If $X$ a smooth $G$-$CW$-complex, then $O^G(X)$ is a free $\OrsmG{G}$-$CW$-complex in
  the sense of~\cite[Definition~3.8]{Davis-Lueck(1998)}. In the sequel we will just talk about a
  $\OrsmG{G}$-$CW$-complex instead of a free $\OrsmG{G}$-$CW$-complex.
  Note that in~\eqref{O_upper_G(X)} we consider $\map_G(G/H,X)$ as a topological space in
  order to ensure that the canonical bijection $\map_G(G/H,X) \xrightarrow{\cong} X^H$
  sending $f$ to $f(eH)$ is a homeomorphism.  If $G/L$ is an object in $\OrsmG{G}$,
  then $G/L$ is a discrete space and the topology on $\map_G(G/H,G/L)$ is the discrete
  one. Hence the topological space $\map_G(G/H,G/L)$ agrees with the set of morphisms
  $\mor_{\OrsmG{G}}(G/H,G/L)$ equipped with the discrete topology. This is one key
  ingredient in the proof that $O^G(X)$ is a $\OrsmG{G}$-$CW$-complex. The other two
  ingredients are that for a $G$-pushout
  \[\xymatrix{\coprod_{i \in I} G/L_i \times S^{n-1} \ar[r] \ar[d]
      &
      X_{n-1} \ar[d]
      \\
      \coprod_{i \in I} G/L_i \times D^n \ar[r] 
      &
      X_{n-1}}
    \]
    and a subgroup $H \subseteq G$, we obtain after applying $\map_G(G/H,-)$ the pushout 
    \[\xymatrix{\coprod_{i \in I}\map_G(G/H,G/L_i) \times S^{n-1} \ar[r] \ar[d]
      &
      \map_G(G/H,X_{n-1}) \ar[d]
      \\
      \coprod_{i \in I} \map_G(G/H,G/L_i) \times D^n \ar[r] 
      &
      \map_G(G/H,X_n)}
    \]
    and that $\map_G(G/H;X)$ carries the weak topology with respect to the filtration by
    the subspaces $\map_G(G/H,X_n)$.
\end{remark}

Let $\bfE \colon \OrsmG{G} \to \Spectra$ be any covariant $\OrsmG{G}$-spectrum. Given a
smooth $G$-$CW$-complex $X$, we obtain a spectrum
$\bfE(X) := O^G(X)_+ \wedge_{\OrsmG{G}} \bfE$.  The smash product $\wedge_{\OrsmG{G}}$ is
defined for instance in~\cite[Section~1]{Davis-Lueck(1998)} and denoted by
$\otimes_{\OrsmG{G}}$ there.  Given a pair $(X,A)$ of smooth $G$-$CW$-complexes, let
$\bfE(X,A)$ be the cofiber of the maps of spectra $\bfE(A) \to \bfE(X)$ induced by the
inclusion $A \to X$.  Define 
\begin{equation}
  H_n^G(X,A;\bfE) := \pi_n(\bfE(X,A)).
  \label{H_n_upper_G(X,A;bfE)}
\end{equation}  

Then we obtain a $G$-homology theory $H_*^G(-,\bfE)$ on the category of smooth
$G$-$CW$-complexes, i.e., we obtain a covariant functor from the category of pairs of
smooth $G$-$CW$-complexes to the category of $\IZ$-graded abelian groups sending $(X,A)$
to $H_*(X,A;\bfE)$ satisfying the obvious axioms, namely, $G$-homotopy invariance, the
long exact sequence of a pair, excision, and the disjoint union axiom.  We get for
every object $G/H$ in $\OrsmG{G}$ and every $n \in \IZ$ an isomorphism
\begin{equation}
  \pi_n(\bfE(G/H)) \xrightarrow{\cong} H_n(G/H;\bfE),
  \label{H_n(G/H;bfE)}
\end{equation}
which is natural in $G/H$ and $\bfE$. We leave it to the reader to figure out the
straightforward proof that all these claims follow from~\cite[Sections~4 and~7]{Davis-Lueck(1998)}.


\typeout{--- Section 3: The $\COP$-assembly map  for categories with $G$-support -------}

\section{The $\COP$-assembly map for categories with $G$-support}%
\label{sec:The_COP_assembly_map_for_categories_with_G-support}

Let $G$ be a td-group. We denote by $\EGF{G}{\COP}$ its classifying
$G$-$CW$-complex for the family $\COP$ of compact open subgroups. This is a proper smooth 
$G$-$CW$-complex such that the $H$-fixed point set $\EGF{G}{\COP}^H$ is weakly
contractible for every compact open subgroup $H \subseteq G$.  A $G$-$CW$-complex $X$ is
 proper and smooth if and only if each of its isotropy group is compact and open,
see~\cite[Theorem~1.23 on page~18]{Lueck(1989)}. Two models for $\EGF{G}{\COP}$ are
$G$-homotopy equivalent. This follows from the universal property that for any proper
smooth $G$-$CW$-complex $X$ there is up to $G$-homotopy precisely one $G$-map from $X$ to
$\EGF{G}{\COP}$. We mention that the canonical $G$-map $\EGF{G}{\COP} \to \JGF{G}{\COP}$
is a $G$-homotopy equivalence, if $\JGF{G}{\COP}$ denotes the numerable version of the
classifying space for the family $\COP$, see~\cite[Lemma~3.5]{Lueck(2005s)}.  For more
information about classifying spaces for families, we refer for instance
to~\cite{Lueck(2005s)}.

\begin{problem}\label{pro:Which_td-groups_G_have_the_property_(FJ_cs)?}
 For which categories $\calb$ with $G$-support is the $\COP$-assembly map 
induced by the projection $\EGF{G}{\COP} \to G/G$
    \[
      H_n^G(\EGF{G}{\COP};\bfKinfty_{\calb}) \to H_n^G(G/G;\bfKinfty_{\calb}) =
      \pi_n(\bfKinfty(\calb_{\oplus}))
    \]
   bijective for all $n \in \IZ$?
\end{problem}

Given an additive category $\cala$, we have defined
in~\cite[Definition~6.2~(iii)]{Bartels-Lueck(2020additive)} the notion \emph{$l$-uniformly
  regular coherent} for a natural number $l$. The additive category $\cala$ is
$l$-uniformly regular coherent, if and only if its idempotent completion $\Idem(\cala)$ is
$l$-uniformly regular coherent, see~\cite[Lemma~6.4~(vi)]{Bartels-Lueck(2020additive)}.
Intrinsic equivalent definitions of the notion $l$-uniformly regular coherent for
idempotent categories are presented in~\cite[Lemma~6.6]{Bartels-Lueck(2020additive)}.  For
instance, if $l \ge 2$ and $\cala$ is idempotent complete, $\cala $ is $l$-uniformly
regular coherent, if and only if for every morphism $f_1\colon A_1 \to A_0$ we can find a
sequence of length $l$ in $\cala$
\[0 \to A_l \xrightarrow{f_l} A_{l-1} \xrightarrow{f_{l-1}} \cdots \xrightarrow{f_2} A_{1}
  \xrightarrow{f_1} A_0,
\]
which is exact at $A_i$ for $i = 1,2, \ldots, n$ in the sense that for any object $B$ in
$\cala$ the induced sequence
$\hom_{\cala}(B,A_{i+1}) \xrightarrow{(f_{i+1})_*} \hom_{\cala}(B,A_{i})
\xrightarrow{(f_{i})_*} \hom_{\cala}(B,A_{i-1})$ is exact.

Given an additive category $\cala$, we define by $\cala[\IZ]$ the associated additive
category of finite Laurent series over $\cala$ as follows.  It has the same objects as
$\cala$. Given two objects $A$ and $B$, a morphism $f \colon A \to B$ in $\cala[\IZ]$ is a
formal sum $f = \sum_{i \in \IZ} f_i \cdot t^i$, where $f_i \colon A \to B$ is a morphism
in $\cala$ from $A$ to $B$ and only finitely many of the morphisms $f_i$ are
non-trivial. If $g = \sum_{j \in \IZ} g_j \cdot t^j$ is a morphism in $\cala[\IZ]$ from
$B$ to $C$, we define the composite $g \circ f \colon A \to C$ by
\[
  g \circ f := \sum_{k \in \IZ} \biggl( \sum_{\substack{i,j \in \IZ,\\i+j = k}}
  g_j \circ  f_i\biggr) \cdot t^k.
\]
For a natural number $d$ we define inductively $\cala[\IZ^d]= (\cala[\IZ^{d-1}])[\IZ]$.

Without some regularity assumptions the answer to
Problem~\ref{pro:Which_td-groups_G_have_the_property_(FJ_cs)?}  is in general not
positive, as the example $G=\IZ$ and $R=Z[t]/t^2$ together with the Bass-Heller-Swan
decomposition shows, see~\cite[Theorem~3.2.22 on page~149 and Exercise~3.2.23 on
page~151]{Rosenberg(1994)}

\begin{definition}[Reg]\label{def:(Reg)_for_calb}  A category $\calb$ be with $G$-support in the sense of
  Definition~\ref{def:category_with_G-support} satisfies the condition (Reg), if for every
  natural number $d$ there is a natural number $l(d)$ such that for every compact open
  subgroup $K \subseteq G$ the additive category $\calb[G/K]_{\oplus}[\IZ^d]$ is
  $l(d)$-uniformly regular coherent.
\end{definition}

\begin{remark}\label{rem:(FJ)_(cs)}  
  The Farrell-Jones Conjecture formulated for categories with $G$-support would predict
  that the answer to Problem~\ref{pro:Which_td-groups_G_have_the_property_(FJ_cs)?}
  is positive for every td-group $G$ and every
  category $\calb$ with $G$-support that  satisfies condition  (Reg) of
  Definition~\eqref{def:(Reg)_for_calb}. 

  However, this is  already for discrete groups $G$ a far too optimistic statement, since the
  notion of a category with $G$-support is very general and the actual proofs of the
  Farrell-Jones Conjecture for certain classes of discrete groups have no chance to go
  through in this general setting, the problem is the construction of certain   transfer,
  see~\cite[Section~13]{Bartels-Lueck(2023K-theory_red_p-adic_groups)}.
  The adequate formulation of the Farrell-Jones Conjecture has been given in
  Conjecture~\ref{con:COP-Farrell-Jones_Conjecture}. 
  
\end{remark}


\typeout{--------- Section 4: Inheritance to subgroups modulo normal compact subgroups ------}


\typeout{--------- Section 4: Inheritance to subgroups modulo normal compact subgroups ------}

\section{Inheritance to subgroups modulo normal compact subgroups}%
\label{sec:Inheritance_to_subgroups_modulo_normal_compact_subgroups}

The main results of this section is

\begin{theorem}[Inheritance to closed subgroups modulo normal compact groups]%
\label{the:(FJ_cs)_passes_to_closed_subgroup_modulo_compact_subgroups}
  Let $\alpha \colon G \to G'$ be a (not necessarily open) group homomorphism with compact
  kernel.  Let $\calb$ be a category with $G$-support.

  Then there exists a category with
  $G'$-support $\ind_{\alpha} \calb$ with the following  properties:
  \begin{enumerate}
  \item\label{the:(FJ_cs)_passes_to_closed_subgroup_modulo_compact_subgroups:(Reg)}
    If $\calb$ satisfies condition (Reg), see Definition~\ref{def:(Reg)_for_calb}, 
    then $\ind_{\alpha} \calb$ also satisfies condition (Reg);
    
  \item\label{the:(FJ_cs)_passes_to_closed_subgroup_modulo_compact_subgroups:FJ_cs}
    There is a a commutative diagram
  \[
    \xymatrix{H_n^G(\EGF{G}{\COP};\bfK_{\calb})\ar[d]_{\cong} \ar[r] &
      H_n^G(G/G;\bfK_{\calb}) \ar[d]^{\cong}
      \\
      H_n^{G'}(\EGF{G'}{\COP};\bfK_{\ind_{\alpha} \calb}) \ar[r] &
      H_n^{G'}(G'/G';\bfK_{\ind_{\alpha} \calb})}
  \]
  whose vertical arrows are bijective. 
  \end{enumerate}
\end{theorem}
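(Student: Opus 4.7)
The plan is to take $\ind_{\alpha} \calb$ to have the same underlying $\IZ$-category as $\calb$ but with $G'$-support defined by
$$\supp_{\ind_{\alpha} \calb}(\varphi) \;:=\; \alpha(\supp_{\calb}(\varphi)),$$
which is a compact subset of $G'$ by continuity of $\alpha$. Verification of the three axioms in Definition~\ref{def:category_with_G-support} is immediate: $\alpha$ preserves $\emptyset$, products, unions, and sends nonempty sets to nonempty sets.

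The heart of the proof is the natural identification of $\IZ$-categories
$$(\ind_{\alpha} \calb)[S'] \;=\; \calb[\alpha^{*}S']$$
for every smooth $G'$-set $S'$, where $\alpha^{*}S'$ is $S'$ viewed as a smooth $G$-set via $\alpha$ (smooth since $(\alpha^{*}S')_{x} = \alpha^{-1}(G'_x)$ is open in $G$ by continuity of $\alpha$). An object $(x,B)$ on the left side requires $\alpha(\supp_{\calb}(B)) \subseteq G'_x$, i.e., $\supp_{\calb}(B) \subseteq \alpha^{-1}(G'_x) = (\alpha^{*}S')_x$, which is exactly the defining condition on the right; morphisms match in the same way. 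Consequently $\bfKinfty_{\ind_{\alpha} \calb}(S') = \bfKinfty_{\calb}(\alpha^{*}S')$ as spectra, naturally in $S'$.

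For property (i), take a compact open subgroup $K' \subseteq G'$. The preimage $\alpha^{-1}(K') \subseteq G$ is open (continuity) and, using that $\ker\alpha$ is compact, compact as well. By Lemma~\ref{lem:additivity_of_the_smooth_spectrum} and the underlying orbit decomposition~\eqref{decomposing_calb(S)_into_(calb(O)_for_orbits_O}, the additive category $(\ind_{\alpha} \calb)[G'/K']_{\oplus}$ decomposes (up to equivalence) into a direct sum of summands $\calb[G/H_i]_{\oplus}$ with each $H_i$ of the form $\alpha^{-1}(g_i' K' (g_i')^{-1})$ and hence compact open in $G$. Since $l$-uniform regular coherence commutes with the additional $[\IZ^d]$ factor (by standard Bass–Heller–Swan-type arguments) and is preserved under direct sums of additive categories, (Reg) for $\calb$ yields (Reg) for $\ind_{\alpha} \calb$.

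For property (ii), the natural iso $\bfKinfty_{\ind_{\alpha} \calb}(S') = \bfKinfty_{\calb}(\alpha^{*}S')$ extends from $G'$-sets to smooth $G'$-$CW$-complexes $X'$, yielding isomorphisms
$$H_n^{G'}(X'; \bfKinfty_{\ind_{\alpha} \calb}) \;\cong\; H_n^{G}(\alpha^{*}X'; \bfKinfty_{\calb}).$$
The right vertical arrow of the diagram follows since $\alpha^{*}(G'/G') = G/G$. The left vertical arrow requires identifying $\alpha^{*}\EGF{G'}{\COP}$ with $\EGF{G}{\COP}$ as smooth proper $G$-$CW$-complexes up to $G$-homotopy: the pulled-back isotropies $\alpha^{-1}(G'_x)$ are compact open in $G$ by the same argument as in (i), and for $H \in \COP(G)$ the $H$-fixed set of $\alpha^{*}\EGF{G'}{\COP}$ equals $(\EGF{G'}{\COP})^{\alpha(H)}$, which is contractible because $\alpha(H)$ is a compact subgroup of $G'$ and can be written as an intersection of compact open subgroups of $G'$. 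Commutativity of the diagram is then the naturality of the identification $\bfKinfty_{\ind_{\alpha} \calb} = \bfKinfty_{\calb} \circ \alpha^{*}$ applied to the classifying map $\EGF{G'}{\COP} \to G'/G'$. The main obstacle is precisely this last identification: showing that $\alpha^{*}\EGF{G'}{\COP}$ models $\EGF{G}{\COP}$ requires combining two td-group facts — that every compact subgroup of a td-group sits inside compact open subgroups, and that the combined hypotheses on $\alpha$ (compact kernel, with image a subgroup of $G'$) pull compact opens back to compact opens — and correctly tracking these through the universality characterization of classifying spaces for proper smooth actions.
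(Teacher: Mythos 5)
Your construction of $\ind_{\alpha}\calb$ is genuinely different from the paper's, and it is worth being clear about what this does and does not buy you. You keep the same objects and morphisms as $\calb$ and set $\supp_{\ind_{\alpha}\calb}(\varphi)=\alpha(\supp_{\calb}(\varphi))$; the paper instead takes objects to be pairs $(B,g')$ with $g'\in G'$ and defines $\supp$ with a conjugation twist by $g'$. Your version is simpler and gives an \emph{isomorphism} of $\IZ$-categories $(\ind_{\alpha}\calb)[S']\cong\calb[\alpha^{*}S']$ rather than merely the equivalence of Lemma~\ref{lem:W_natural_equivalence}, which streamlines the argument. For the theorem as literally stated (``there exists a category with $G'$-support\,\dots'') this suffices. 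However, your $\ind_{\alpha}\calb$ fails the \emph{Translation} axiom of Definition~\ref{def:Hecke_categories_with_G-support} in general: all your supports land inside $\alpha(G)$, so for $g'\notin\alpha(G)$ you cannot realize $g'\supp(B)g'^{-1}$ as a support. The paper's $g'$-twist is there precisely so that Lemma~\ref{lem:induction_for_G-categories_with_G-action} goes through, which is what Theorem~\ref{the:inheritance} needs downstream. So your construction proves the theorem but not the version of it the paper actually uses.

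For part~(i) your argument is essentially the paper's: decompose $\alpha^{*}(G'/K')$ into $G$-orbits $G/\alpha^{-1}(g'K'g'^{-1})$, note these are compact open by the compact-kernel hypothesis together with the standing assumption that $G\to\im(\alpha)$ is open and $\im(\alpha)$ closed, and invoke preservation of $l$-uniform regular coherence under direct sums. Your appeal to ``Bass--Heller--Swan-type arguments'' to commute $[\IZ^d]$ past the decomposition is a red herring: what is needed is only the formal fact that $[\IZ^d]$ commutes with arbitrary direct sums of additive categories, since finite Laurent series involve only finitely many summands.

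For part~(ii) there are two gaps. First, you assert that the identification $\bfKinfty_{\ind_{\alpha}\calb}(S')=\bfKinfty_{\calb}(\alpha^{*}S')$ ``extends from $G'$-sets to smooth $G'$-CW-complexes''; this is exactly the content of Theorem~\ref{the:Adjunction_Theorem_for_spectra} and Theorem~\ref{the:Adjunction_Theorem_for_categories_with_G-support}, whose proof requires the additivity of $K$-theory over arbitrary index sets (Lemma~\ref{lem:additivity_of_the_smooth_spectrum}) to identify $\alpha_{*}\bfKinfty_{\calb}$ with $\bfKinfty_{\ind_{\alpha}\calb}$ over $\OrsmG{G'}$, followed by CW-induction and Mayer--Vietoris. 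It is not a formal consequence of the pointwise identification. Second, your argument that $\alpha^{*}\EGF{G'}{\COP}$ models $\EGF{G}{\COP}$ via ``$(\EGF{G'}{\COP})^{\alpha(H)}$ is contractible because $\alpha(H)$ is an intersection of compact opens'' needs justification: an intersection of weakly contractible fixed-point sets is not automatically weakly contractible, and one has to argue that for a smooth $G'$-CW-complex $X'$ and $\alpha(H)=\bigcap_i K'_i$, one has $X'^{\alpha(H)}=\bigcup_i X'^{K'_i}$ (a directed colimit, using that isotropies are open and a compactness argument) and then pass contractibility through the colimit. The paper sidesteps this entirely by working with the family $\COM$ of all compact subgroups: since $\ker\alpha$ is compact, $\alpha^{*}\EGF{G'}{\COM}$ is immediately a model for $\EGF{G}{\COM}$, and then Lemma~3.5 of Lück's survey gives that $\EGF{G}{\COP}\to\EGF{G}{\COM}$ is a $G$-homotopy equivalence. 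Your route can be completed, but the $\COM$ detour is both shorter and avoids the directed-colimit argument; you should either supply the colimit argument or adopt the $\COM$ comparison.
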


Its proof needs some preparation. The notion of categories with $G$-support has been designed to with Theorem~\ref{the:(FJ_cs)_passes_to_closed_subgroup_modulo_compact_subgroups} in mind. Its proof needs some preparation.

\subsection{The definition of the induced category with $G'$-support}%
\label{subsec:The_definition_of_the_induced_category_with_G'-support}

Fix a (not necessarily open) group homomorphism of td-groups $\alpha \colon G \to G'$.
We always require for $\alpha$ that $\im(\alpha) \subseteq G'$ is closed and that
the induced group homomorphism $G \to \im(\alpha)$ is an identification, or equivalently, is open. 
We want to assign to a category with $G$-support $\calb$ a category with $G'$-support
$\ind_{\alpha} \calb$  as follows.

We first define a $\IZ$-category $\ind_{\alpha} \calb$.  An object $(B_0,g_0')$ in
$\ind_{\alpha} \calb$ is a pair 
consisting of elements $B_0 \in \ob(\calb)$ and $g_0' \in G'$.
 Given two objects $(B_0,g_0')$ and $(B_1,g_1')$ in
$\ind_{\alpha} \calb$, define the $\IZ$-module of morphisms between them by
\begin{equation}
  \mor_{\ind_{\alpha} \calb}\bigl((B_0,g_0'),(B_1,g_1')\bigr)
  = \mor_{\calb}(B_0,B_1). 
  \label{morphisms_in_ind_iota_calb}
\end{equation}
Composition and the identity elements in $\ind_{\alpha} \calb$ are given by the
corresponding ones in $\calb$.

The support function for $\ind_{\alpha} \calb$ assigs to a morphism
$\varphi\colon (B_0,g_0') \to (B_1,g_1)$
the compact subset of $G'$ given by
\begin{equation}
  \supp_{\ind_{\alpha} \calb}(\varphi)  := g_1'\alpha(\supp_{\calb}(\varphi)) g_0'^{-1}.
  \label{supp_ind_iota_calb}
\end{equation}
 In particular we get
\begin{equation}
\supp_{\ind_{\alpha} \calb}\bigl((B_0,g_0')\bigr) = g_0'\alpha(\supp_{\calb}(B_0))g_0'^{-1}
\label{supp_ind_iota_calb_object}
\end{equation}
for an object $(B_0,g_0')$.  This finishes the definition of the category with $G'$-support
$\ind_{\alpha} \calb$.


\subsection{The smooth $K$-theory spectrum is compatible with induction}%
\label{subsubsec:The_smooth_K-theory_spectum_is_compatible_with_induction}

The smooth $K$-theory spectrum $\bfKinfty_{\calb}$ of Definition~\ref{def:Smooth_K-theory_spectrum} induces a covariant
$\OrsmG{G} $-spectrum denoted in the same way
\[
  \bfKinfty_{\calb} \colon \OrsmG{G} \to \Spectra, \quad G/H \mapsto
  \bfKinfty(\calb[G/H]_{\oplus}).
\]
Given any covariant $\OrsmG{G}$-spectrum $\bfE \colon \OrsmG{G} \to \Spectra$, define the
covariant $\OrsmG{G'}$-spectrum $\alpha_* \bfE$,
\begin{multline}
  \alpha_*\bfE \colon \OrsmG{G'} \to \Spectra, \\
  G'/H' \mapsto \map_{G'}(\alpha_*G/?,G'/H')_+ \wedge_{\OrsmG{G}} \bfE(G/?).
  \label{alpha_ast_bfE}
\end{multline}
Note that for an open subgroup $H \subseteq G$ the subgroup $\alpha(H)$ of $G'$ is
automatically closed, since $\im(\alpha) \subseteq G'$ is closed and
$\alpha(H) \subseteq \im(\alpha)$ is an open and hence a closed subgroup of $\im(\alpha)$
as $\alpha \colon G \to \im(\alpha)$ is an identification.  It does not matter that
$\alpha_*(G/H) = G'/\alpha(H)$ is not necessarily a smooth $G'$-space, since $G'/H'$ is
discrete and hence $\map_{G'}(\alpha_*G/H,G'/H') \cong (G'/H')^{\alpha(H)}$ carries the
discrete topology.  In particular we get the covariant $\OrsmG{G'}$-spectrum
$\alpha_*\bfKinfty_{\calb}$.  The construction above applied to $\ind_{\alpha} \calb$
instead of $\calb$ yields another covariant $\OrsmG{G'}$-spectrum
$\bfKinfty_{\ind_{\alpha} \calb}$.

\begin{proposition}\label{pro:weak_equivalence_U}
  There is a weak homotopy equivalence of covariant $\OrsmG{G'}$-spectra, natural in
  $\calb$,
  \[
    \bfU \colon \alpha_* \bfKinfty_{\calb} \xrightarrow{\simeq}
    \bfKinfty_{\ind_{\alpha} \calb}.
  \]
\end{proposition}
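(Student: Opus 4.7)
The plan is to construct $\bfU$ out of evident level-wise functors, and then to establish the weak equivalence by recognising both sides, up to an equivalence of additive categories, as two presentations of $\bfKinfty_\calb$ evaluated on the smooth $G$-set $\alpha^*(G'/H')$ obtained by restricting $G'/H'$ via $\alpha$. Here $\alpha^*(G'/H')$ is smooth as a $G$-set because each $G$-isotropy group $\alpha^{-1}(g_1'H'g_1'^{-1})$ is open in $G$ whenever $H'$ is open in $G'$.

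\emph{Construction of $\bfU$.} For an open $H \subseteq G$ and a $G'$-map $\phi \colon \alpha_*(G/H) \to G'/H'$, which corresponds to a point $g'H' \in (G'/H')^{\alpha(H)}$ (i.e., to $g' \in G'$ with $\alpha(H) \subseteq g'H'g'^{-1}$), I will define the $\IZ$-functor
\[
F_\phi \colon \calb[G/H] \to (\ind_\alpha \calb)[G'/H'], \quad (gH, B) \mapsto (\alpha(g) g' H', (B, e)),
\]
sending a morphism $\varphi$ to $\varphi$. The object-level support condition is automatic: $\alpha(\supp B) \subseteq \alpha(gHg^{-1}) \subseteq \alpha(g) g'H' g'^{-1} \alpha(g)^{-1} = G'_{\alpha(g)g'H'}$; the morphism-level support condition is verified analogously. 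These $F_\phi$ are natural in $G/H \in \OrsmG{G}$ and in $G'/H' \in \OrsmG{G'}$, and assembling them through the coend that defines $\alpha_*\bfKinfty_\calb$ yields a map $\bfU$ of $\OrsmG{G'}$-spectra that is plainly natural in $\calb$.

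\emph{A common additive model.} To prove the weak equivalence I will introduce the functor
\[
\calg \colon (\ind_\alpha \calb)[G'/H']_\oplus \to \calb[\alpha^*(G'/H')]_\oplus, \quad (y,(B,g_0')) \mapsto (g_0'^{-1} y, B).
\]
A direct calculation, using $G'_{g_0'^{-1} y} = g_0'^{-1} G'_y g_0'$ and $G_{g_0'^{-1} y} = \alpha^{-1}(G'_{g_0'^{-1} y})$, shows that the source support condition $g_0'\alpha(\supp B)g_0'^{-1} \subseteq G'_y$ is exactly the target condition $\supp B \subseteq G_{g_0'^{-1} y}$, and the analogous translation works for morphisms. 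Hence $\calg$ is fully faithful, and essential surjectivity is immediate from $(x,B) = \calg(x,(B,e))$. So $\calg$ is an equivalence of additive categories and induces a weak homotopy equivalence on non-connective $K$-theory.

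\emph{Recognition.} The composite $\calg \circ F_\phi$ sends $(gH,B)$ to $(\tilde\phi(gH), B)$, where $\tilde\phi \colon G/H \to \alpha^*(G'/H')$, $gH \mapsto \alpha(g)g'H'$, is the $G$-adjoint of $\phi$ under $\map_{G'}(\alpha_* -, -) \cong \map_G(-, \alpha^* -)$. Consequently the composite of $\bfU_{G'/H'}$ with the equivalence induced by $\calg$ is the canonical assembly map
\[
\map_G(G/?, \alpha^*(G'/H'))_+ \wedge_{\OrsmG{G}} \bfKinfty_\calb(G/?) \;\to\; \bfKinfty(\calb[\alpha^*(G'/H')]_\oplus).
\]
Since $\alpha^*(G'/H')$ is a $0$-dimensional smooth $G$-$CW$-complex, this map splits over $G$-orbits into the natural identifications $\bfKinfty_\calb(G/G_O) \simeq \bfKinfty(\calb[G/G_O]_\oplus)$ and is therefore a weak equivalence by Lemma \ref{lem:additivity_of_the_smooth_spectrum}. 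The main technical point, and the subtlety that drives the proof, will be the full faithfulness of $\calg$: a priori the extra datum $g_0' \in G'$ attached to objects of $\ind_\alpha\calb$ could enlarge the isomorphism and morphism structure, but the support bookkeeping shows it merely translates the position in $G'/H'$.
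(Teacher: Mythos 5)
Your proposal is correct and follows essentially the same strategy as the paper: reduce to the additivity Lemma~\ref{lem:additivity_of_the_smooth_spectrum} by identifying both sides, level-wise, with $\bfKinfty$ of $\calb[\alpha^*(G'/H')]$ via an equivalence of $\IZ$-categories between $\calb[\alpha^*S']$ and $\ind_\alpha\calb[S']$ together with the adjunction $\map_{G'}(\alpha_*-,-) \cong \map_G(-,\alpha^*-)$. The only cosmetic difference is that your comparison functor $\calg$ is the quasi-inverse of the paper's $W \colon \calb[\alpha^*S'] \to \ind_\alpha\calb[S']$, $(x,B)\mapsto (x,(B,e'))$, which makes essential surjectivity immediate from $\calg\circ W = \id$ rather than requiring the explicit translation isomorphisms used in the paper's Lemma~\ref{lem:W_natural_equivalence}.
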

Its proof needs some preparation.

Given a smooth $G'$-set $S'$, we construct a functor of $\IZ$-categories
\begin{equation}
  W \colon \calb[\alpha^*S'] \to \ind_{\alpha} \calb[S']
  \label{W_calb(iota_upper_ast_S)_to_ind_(iota)calb(S')}
\end{equation}
as follows. It sends an object $(x,B)$ in $\calb[\alpha^*S']$, which consists of an element
$x \in S'$ and an object $B$ in $\calb$ to the object $\bigl(x,(B,e')\bigr)$ in
$\ind_{\alpha} \calb[S']$ given by $x \in S'$ and the object $(B,e')$ in
$\ind_{\alpha} \calb$ for $e' \in G'$ the unit.  This makes sense, since the object
$(x,B)$ satisfies $\supp_{\calb}(B) \subseteq G_x$, we have $G_{x} = \alpha^{-1}(G'_x)$ and we
compute
\[
  \supp_{\ind_{\alpha} \calb[S']}(B,e') \stackrel{\eqref{supp_ind_iota_calb_object}}{=} e'
  \alpha(\supp_{\calb}(B)) e'^{-1} = \alpha(\supp_{\calb}(B))
  \subseteq \alpha(G_x) \subseteq G'_x.
\]
Consider two objects $(x_0,B_0)$ and $(x_1,B_1)$. From the definitions we get
identifications
\begin{eqnarray*}
  \mor_{\calb[\alpha^*S']}\bigl((x_0,B_0),(x_1,B_1)\bigr)
  & = &
        \{\varphi \in \mor_{\calb}(B_0,B_1) \mid \supp_{\calb}(\varphi) \subseteq G_{x_0,x_1}\},
\end{eqnarray*}
and
\begin{eqnarray*}
  \lefteqn{\mor_{\ind_{\alpha} \calb[S']}\bigl(W(x_0,B_0),W(x_1,B_1)\bigr)}
  & &
  \\
  & = &
  \mor_{\ind_{\alpha} \calb[S']}\bigl((x_0,(B_0,e')),(x_1,(B_1,e'))\bigr)
  \\
  & = &
   \{\varphi \in \mor_{\ind_{\alpha} \calb}\bigl((B_0,e'),(B_1,e')\bigr)
  \mid \supp_{\ind_{\alpha} \calb}(\varphi) \subseteq G'_{x_0,x_1}\}
  \\
  & \stackrel{\eqref{morphisms_in_ind_iota_calb},~\eqref{supp_ind_iota_calb}}{=}  &
   \{\varphi \in \mor_{\calb}(B_0,B_1) \mid e' \alpha(\supp_{\calb}(\varphi)) e'^{-1} \subseteq G'_{x_0,x_1}\}
  \\
  & = &
        \{\varphi \in \mor_{\calb}(B_0,B_1) \mid  \alpha(\supp_{\calb}(\varphi)) \subseteq G'_{x_0,x_1}\}
  \\
  & = &
 \{\varphi \in \mor_{\calb}(B_0,B_1) \mid  \supp_{\calb}(\varphi) \subseteq \alpha^{-1}(G'_{x_0,x_1})\}
  \\
  & = &
  \{\varphi \in \mor_{\calb}(B_0,B_1) \mid  \supp_{\calb}(\varphi) \subseteq G_{x_0,x_1}\}.
\end{eqnarray*}
Under these identification we define
\[W \colon \mor_{\calb[\alpha^*S']}\bigl((x_0,B_0),(x_1,B_1)\bigr) \to
  \mor_{\ind_{\alpha}\calb[S']}\bigl(W(x_0,B_0),W(x_1,B_1)\bigr)
\]
by the identity on
$\{\varphi \in \mor_{\calb}(B_0,B_1) \mid \supp_{\calb}(\varphi) \subseteq G_{x_0,x_1}\}$.
One easily checks that $W$ is a well-defined functor of
$\IZ$-categories.

\begin{lemma}\label{lem:W_natural_equivalence}
  The functor $W \colon \calb[\alpha^*S']\to \ind_{\alpha} \calb[S']$
  of~\eqref{W_calb(iota_upper_ast_S)_to_ind_(iota)calb(S')} is an equivalence of
  $\IZ$-categories and is natural in $S'$ and $\calb$.
\end{lemma}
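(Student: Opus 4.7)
The plan is to verify the two standard conditions (fully faithful and essentially surjective) and to note that naturality in $S'$ and $\calb$ is immediate by inspection. Fullness and faithfulness need no further argument: the chain of identifications displayed just above the statement of the lemma already exhibits both hom-sets as canonically identified with
\[
\{\varphi \in \mor_{\calb}(B_0, B_1) \mid \supp_{\calb}(\varphi) \subseteq G_{x_0, x_1}\},
\]
and $W$ acts as the identity under these identifications. So the only genuine content is essential surjectivity.

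For essential surjectivity, let $(x, (B, g'))$ be an arbitrary object of $\ind_{\alpha} \calb[S']$. The object axiom combined with the support formula for objects reads $g' \alpha(\supp_{\calb}(B)) g'^{-1} \subseteq G'_x$, which is equivalent to $\alpha(\supp_{\calb}(B)) \subseteq G'_{g'^{-1} x}$. Setting $y := g'^{-1} x$, viewed as an element of $\alpha^* S'$, and using that the $G$-action on $\alpha^* S'$ is pulled back along $\alpha$ so that $G_y = \alpha^{-1}(G'_y)$, this inclusion becomes $\supp_{\calb}(B) \subseteq G_y$. Hence $(y, B)$ is a legitimate object of $\calb[\alpha^* S']$, and $W(y, B) = (y, (B, e'))$.

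I then exhibit mutually inverse isomorphisms between $(y, (B, e'))$ and $(x, (B, g'))$ in $\ind_{\alpha} \calb[S']$, both given by $\id_B \in \mor_{\calb}(B, B)$ under the identification of hom-sets in $\ind_{\alpha} \calb$ with hom-sets in $\calb$. The required support inclusions $g' \alpha(\supp_{\calb}(B)) \subseteq G'_{y, x}$ and $\alpha(\supp_{\calb}(B)) g'^{-1} \subseteq G'_{x, y}$ are each direct restatements of the object axiom, using the elementary identities $G'_{y, x} = G'_x g'$ and $G'_{x, y} = g'^{-1} G'_x$. Since the composite of the two morphisms in either order is $\id_B$, which is also the identity morphism of each endpoint, they are genuinely mutually inverse. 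Naturality of $W$ in $S'$ and in $\calb$ then follows at once, because $W$ is defined by the simple rule $(x, B) \mapsto (x, (B, e'))$ on objects and is the identity on morphisms under the above identifications, so all naturality squares commute strictly. The only non-formal step is the essential surjectivity, but since the required isomorphism is produced simply by translating the $G'$-coordinate $g'$ into the $S'$-coordinate via $x \mapsto g'^{-1} x$, I do not expect any serious obstacle.
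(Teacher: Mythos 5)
Your proof is correct and follows essentially the same route as the paper: fullness and faithfulness are read off from the displayed identification of hom-sets, and essential surjectivity is proved by translating $(x,(B,g'))$ to $(g'^{-1}x,(B,e'))$ via the isomorphism $\id_B$, whose required support inclusions are exactly the ones you verify.
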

\begin{proof} The naturality statements are obvious. In view of the definition of $W$ on
  morphisms, it remains to show that for any object $\bigl(x,(B,g')\bigr)$
  in $\ind_{\alpha} \calb[S']$ there is an object of the shape
  $\bigl(x',(B',e')\bigr)$ in $\ind_{\alpha} \calb[S']$ such that
  $\bigl(x,(B,g')\bigr)$ and $\bigl(x',(B',e')\bigr)$ are
  isomorphic.  Since $\bigl(x,(B,g')\bigr)$ belongs to $\ind_{\alpha} \calb[S']$, we have
  \begin{equation}
    g' \alpha(\supp_{\calb}(B))g'^{-1}
    \stackrel{\eqref{supp_ind_iota_calb}}{=}  \supp_{\ind_{\alpha} \calb}(g',B) \subseteq G'_x.
    \label{eggv9r8zeorg8zo8z}
  \end{equation}
  The identity $\id_B \colon B \to B$ in $\calb$ determines an isomorphism
  $\varphi \colon (B,e'\alpha(\supp_{\calb}(B)))
    \xrightarrow{\cong} (B,g'\alpha(\supp_{\calb}(B)))$ in $\ind_{\alpha} \calb$. We compute
  \[
    \supp_{\ind_{\alpha} \calb}(\varphi)
    \stackrel{\eqref{supp_ind_iota_calb}}{=} 
    g' \alpha(\supp_{\calb}(B))e'^{-1}
    = 
    g' \alpha(\supp_{\calb}(B)) g'^{-1} g'
    \stackrel{\eqref{eggv9r8zeorg8zo8z}}{\subseteq} 
    G'_x g'
     = 
    G'_{g'^{-1}x,x}.
   \]
   Analogously we get $\supp_{\ind_{\alpha} \calb}(\varphi^{-1}) \subseteq G'_{x,g'^{-1}x}$.
   Hence we  obtain an isomorphism
  \[
    \varphi \colon \bigl(g'^{-1}x,(B,e'\alpha(\supp_{\calb}(B)))\bigr)
    \xrightarrow{\cong} \bigl(x,(B,g'\alpha(\supp_{\calb}(B)))\bigr)
  \]
  in   $\ind_{\alpha} \calb[S']$. This finishes the proof of
  Lemma~\ref{lem:W_natural_equivalence}.
\end{proof}

Given a smooth  $G$-set $S$, we define a map of spectra
\begin{equation}
  V(S) \colon \map_G(G/?, S)_+ \wedge_{\OrsmG{G}} \bfKinfty_{\calb}(G/?)  \to
  \bfK_{\calb}(S)
  \label{V(S)}
\end{equation}
by sending $f \otimes z$ for $f\in \map_G(G/H, S)_+$,
$z \in \bfKinfty_{\calb}(G/H)$ and an open subgroup $H \subseteq G$ to
$\bfKinfty_{\calb}(f)(x)$. 

  \begin{lemma}\label{V(S)_is_a_weak_homotopy_equivalence}
    The map of spectra $V(S)$ is a weak homotopy equivalence
  \end{lemma}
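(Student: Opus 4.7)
The strategy is the standard one for assembly-type maps against a spectrum over the orbit category: show that both sides are compatible with disjoint unions of smooth $G$-sets, reduce to the case $S = G/H$ for an open subgroup $H \subseteq G$, and verify that $V(G/H)$ is (essentially) the identity by a co-Yoneda argument.

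First I would observe that $V(S)$ is natural in $S \in \SSETS{G}$. Writing $S$ as the disjoint union of its $G$-orbits, $S = \coprod_{O \in G\backslash S} O$, I would check that both source and target split compatibly as wedges indexed by $O \in G\backslash S$ and that $V(S)$ is the wedge of the maps $V(O)$. For the target this is exactly the content of Lemma~\ref{lem:additivity_of_the_smooth_spectrum}. For the source it follows from the fact that $\map_G(G/H,-)$ sends disjoint unions of $G$-sets to disjoint unions of sets, so $O^G(\coprod_O O)_+ \simeq \bigvee_O O^G(O)_+$ as pointed $\OrsmG{G}$-spaces, together with the fact that $(-)_+\wedge_{\OrsmG{G}} \bfKinfty_{\calb}$ preserves wedges.

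It then suffices to show that $V(G/H)$ is a weak equivalence for every open $H \subseteq G$. In this orbit case, the contravariant $\OrsmG{G}$-space $\map_G(G/?,G/H)$ is just the representable functor $\mor_{\OrsmG{G}}(?,G/H)$, and the smash product
\[
\mor_{\OrsmG{G}}(?,G/H)_+ \wedge_{\OrsmG{G}} \bfKinfty_{\calb}(G/?) \; \xrightarrow{\cong} \; \bfKinfty_{\calb}(G/H)
\]
is the standard (co-)Yoneda isomorphism for the coend defining the smash product over a small category (see~\cite[Sections~4 and~7]{Davis-Lueck(1998)}); concretely the equivalence sends $(\id_{G/H},z)$ to $z$. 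A direct check of the definition of $V(G/H)$ shows that, under this identification, $V(G/H)$ is precisely the identity map on $\bfKinfty_{\calb}(G/H)$.

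There is one mildly delicate point, namely that $S$ may have infinitely many orbits and we need the wedge decompositions to be true weak homotopy equivalences of non-connective spectra, not just finite coproducts. For the target this is the content of Lemma~\ref{lem:additivity_of_the_smooth_spectrum} (which in turn rests on~\cite[Corollary~7.2]{Lueck-Steimle(2014delooping)}); for the source the wedge decomposition is automatic from the pointwise definition of the smash product as a bar construction / coend. Combining these three ingredients — naturality, wedge additivity on both sides, and the orbit case — yields that $V(S)$ is a weak homotopy equivalence for every smooth $G$-set $S$.
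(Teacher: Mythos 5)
Your proof is correct and follows essentially the same route as the paper: decompose $S$ into $G$-orbits, use the fact that the smash product over $\OrsmG{G}$ distributes over wedges so that the source of $V(S)$ becomes $\bigvee_{O} \bigl(\map_G(G/?,O)_+\wedge_{\OrsmG{G}}\bfKinfty_{\calb}(G/?)\bigr)$, identify each summand with $\bfKinfty_{\calb}(O)$ by the co-Yoneda lemma, and then invoke Lemma~\ref{lem:additivity_of_the_smooth_spectrum}. The only cosmetic difference is that you phrase the argument as a reduction to the orbit case $V(G/H)$ (where the map is the identity), whereas the paper identifies the whole map $V(S)$ directly with the wedge map of Lemma~\ref{lem:additivity_of_the_smooth_spectrum}; the ingredients and the logic are identical.
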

  \begin{proof}
    This follows from Lemma~\ref{lem:additivity_of_the_smooth_spectrum}, since under the
    obvious identifications
    \begin{eqnarray*}
      \lefteqn{\map_G(G/?, S)_+ \wedge_{\OrG{G}} \bfKinfty_{\calb}(G/?)}
      & &
      \\
      & = &
            \map_G\biggl(G/?, \coprod_{O \in G\backslash S} O\biggr)_+ \wedge_{\OrG{G}} \bfKinfty_{\calb}(G/?)
      \\
      & = &
            \biggl(\coprod_{O \in G\backslash S} \map_G(G/?, O)\biggr)_+ \wedge_{\OrG{G}} \bfKinfty_{\calb}(G/?)
      \\
      & = &
            \biggl(\bigvee_{O \in G\backslash S} \map_G(G/?, O)_+\biggr) \wedge_{\OrG{G}} \bfKinfty_{\calb}(G/?)
      \\
      & = &
            \bigvee_{O \in G\backslash S} \biggl(\map_G(G/?, O)_+\wedge_{\OrG{G}} \bfKinfty_{\calb}(G/?)\biggr)
      \\
      & = &
            \bigvee_{O \in G\backslash S} \bfKinfty_{\calb}(O),
    \end{eqnarray*}      
    the map $V(S)$ becomes the map appearing in
    Lemma~\ref{lem:additivity_of_the_smooth_spectrum}.
  \end{proof}

  Now we are ready to give the proof of Proposition~\ref{pro:weak_equivalence_U}.

\begin{proof}
  Given a smooth $G'$-set $S'$, we have the natural
  adjunction isomorphism of discrete $\OrsmG{G}$-sets
  \begin{equation}
    a \colon \map_{G'}(\alpha_* G/?,S') \xrightarrow{\cong} \map_G(G/?, \alpha^*S').
    \label{adjunction_alpha_ast,_versus_alpha_upper_ast}
  \end{equation}
  If we precompose $V(\alpha^*S')$ defined in~\eqref{V(S)} with the induced isomorphism
  \begin{multline*}
    a_+ \wedge_{\OrsmG{G}} \id \colon \map_{G'}(\alpha_* G/?,S')_+ \wedge_{\OrsmG{G}}
    \bfKinfty_{\calb}(G/?)
    \\
    \xrightarrow{\cong} \map_G(G/?, \alpha^*S')_+ \wedge_{\OrsmG{G}} \bfKinfty_{\calb}(G/?),
  \end{multline*}
  we obtain a weak homotopy equivalence of spectra, natural in $S'$,
  \[
    V'(S') \colon \map_{G'} (\alpha_* G/?,S')_+ \wedge_{\OrsmG{G}} \bfKinfty_{\calb}(G/?)
    \xrightarrow{\simeq} \bfKinfty_\calb[\alpha^*S'].
  \]
  If we compose $V'(S')$ with the weak homotopy equivalence $\bfKinfty(W_{\oplus})$
  induced on the $K$-theory spectrum by the equivalence of $\IZ$-categories $W$,
  see~\eqref{W_calb(iota_upper_ast_S)_to_ind_(iota)calb(S')} and
  Lemma~\ref{lem:W_natural_equivalence}, we obtain a weak homotopy equivalence of spectra,
  natural in $S'$,
  \[
    \bfU(S') \colon \map_{G'}(\alpha_* G/?,S')_+ \wedge_{\OrsmG{G}} \bfKinfty_{\calb}(G/?)
    \xrightarrow{\simeq} \bfKinfty_{\ind_{\alpha} \calb}(S').
  \]
  If we let $S'$ run through the objects of $\OrsmG{G'}$, we get from the collection of
  the $\bfU(S')$-s the desired functor $\bfU$. This finishes the proof of
  Proposition~\ref{pro:weak_equivalence_U}.
\end{proof}


\subsection{The Adjunction Theorem for spectra and categories with $G$-support}%
\label{subsec:The_Adjunction_Theorem_for_spectra_and_categories_with_G-support}
Let $\alpha \colon G \to G'$ be a (not necessarily open) group homomorphism.
Let $\bfE \colon \OrsmG{G} \to \Spectra$ be a covariant $\OrsmG{G}$-spectrum.  We
have defined the covariant $\OrsmG{G'}$ spectrum $\alpha_*\bfE$
in~\eqref{alpha_ast_bfE}.

\begin{theorem}[Adjunction Theorem for spectra]\label{the:Adjunction_Theorem_for_spectra}
  Let $(X',A')$ be a pair of smooth $G'$-$CW$-complexes. Then:

  \begin{enumerate}
  \item\label{the:Adjunction_Theorem_for_spectra:ioat_upper_ast_Y_is_smooth} The $G'$-pair
    $\alpha^*(X',A')$ obtained from $(X',A') $ by restriction with $\alpha$ is a pair of
    smooth $G$-$CW$-complexes;

  \item\label{the:Adjunction_Theorem_for_spectra:iso_alpha_ast} We obtain an isomorphism of
    $\IZ$-graded abelian groups
    \[
      \alpha^{\sm}_*(X',A') \colon H_n^G(\alpha^*(X',A');\bfE) \xrightarrow{\cong}
      H_n^{G'}(X',A';\alpha_*\bfE),
    \]
    which is natural in $(X',A')$ and $\bfE$;
  \item\label{the:Adjunction_Theorem_for_spectra:equivalence_of_smooth_G_homology_theories}
    The  collection of the isomorphisms $\alpha^{\sm}_*(X',A')$ yield an isomorphism of smooth
    $G'$-homology theories.

  \end{enumerate}
\end{theorem}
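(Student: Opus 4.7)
The plan is to produce, for every smooth $G'$-CW complex $X'$, a natural isomorphism of spectra $\bfE(\alpha^*X') \cong \alpha_*\bfE(X')$; parts (ii) and (iii) then follow by applying $\pi_n$ and appealing to the cofiber sequences that define the relative spectra. Part (i) is a direct unpacking of definitions: every point stabilizer of $\alpha^*(X', A')$ has the form $\alpha^{-1}(G'_x)$, which is open in $G$ by continuity of $\alpha$, and each equivariant cell $G'/H' \times D^n$ of $(X',A')$ restricts to a disjoint union of $G$-cells $G/L \times D^n$ with $L \subseteq G$ open (indexed by the $G$-orbits in the smooth $G$-set $\alpha^*G'/H'$).

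For the spectrum-level isomorphism, the adjunction $\alpha^*\dashv\alpha_*$ for $G$- and $G'$-sets supplies a natural identification of $\OrsmG{G}$-spaces
\[
O^G(\alpha^*X')(G/H) = (X')^{\alpha(H)} = \map_{G'}(\alpha_*G/H, X'),
\]
so $\bfE(\alpha^*X') \cong \map_{G'}(\alpha_*G/?, X')_+ \wedge_{\OrsmG{G}} \bfE$. On the other hand, unwinding definitions,
\[
\alpha_*\bfE(X') = \map_{G'}(G'/?_1, X')_+ \wedge_{\OrsmG{G'}} \bigl(\map_{G'}(\alpha_*G/?_2, G'/?_1)_+ \wedge_{\OrsmG{G}} \bfE(G/?_2)\bigr).
\]
Interchanging the two coends via Fubini, producing the desired isomorphism reduces to establishing, for every fixed $G/H \in \OrsmG{G}$, the natural identity
\[
\int^{G'/H' \in \OrsmG{G'}} \map_{G'}(G'/H', X')_+ \wedge \map_{G'}(\alpha_*G/H, G'/H')_+ \;\cong\; \map_{G'}(\alpha_*G/H, X')_+.
\]
On a cell $X' = G'/K'$ with $K' \subseteq G'$ open, the contravariant factor $\map_{G'}(-, G'/K')$ is representable in $\OrsmG{G'}$, so the coend on the left collapses by coYoneda to $\map_{G'}(\alpha_*G/H, G'/K')_+$, which agrees with the right-hand side. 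Both sides are functors of $X'$ that turn $G'$-pushouts into pushouts and preserve filtered colimits, so the identity extends to every smooth $G'$-CW complex by induction on skeleta.

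Part (iii) then comes for free: the resulting $\alpha^{\sm}_*(X', A')$ is manifestly natural in $(X',A')$ and in $\bfE$ and commutes with the boundary homomorphisms of pair sequences, because it arises from an isomorphism of spectra compatible with the defining cofiber sequences. Both $(X',A') \mapsto H_n^G(\alpha^*(X',A'); \bfE)$ and $(X',A') \mapsto H_n^{G'}(X',A'; \alpha_*\bfE)$ are smooth $G'$-homology theories — the target by the construction of Subsection~\ref{subsec:Smooth_G-homology_theories}, the source because $\alpha^*$ sends $G'$-homotopies to $G$-homotopies, $G'$-pushouts to $G$-pushouts, and disjoint unions to disjoint unions. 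The main technical subtlety is the coYoneda step: since $\alpha(H) \subseteq G'$ is only closed and not in general open, $\alpha_*G/H = G'/\alpha(H)$ need not lie in $\OrsmG{G'}$, so coYoneda applied to the covariant functor $\map_{G'}(\alpha_*G/H, -)$ is not directly available; the cellular induction above is designed to circumvent this by using coYoneda on the contravariant representable on the left factor, which does lie in $\OrsmG{G'}$ for every cell, very much in the spirit of the remark following Proposition~\ref{pro:weak_equivalence_U}.
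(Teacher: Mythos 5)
Your proof is correct and takes essentially the same route as the paper: both construct the comparison map via composition $(u,v) \mapsto u \circ v$ after unwinding $(\alpha_*\bfE)(X')$ and using associativity of the smash/coend, and both reduce to the case of an orbit $X' = G'/H'$ by cellular induction (the paper phrases this via Mayer--Vietoris, the disjoint union axiom and the Five Lemma applied to the two smooth $G'$-homology theories, which is the homotopy-group shadow of your pushout-and-filtered-colimit argument at the spectrum level). Your closing remark about why the naive coYoneda on the covariant factor $\map_{G'}(\alpha_*G/H,-)$ is unavailable because $\alpha_*G/H$ need not lie in $\OrsmG{G'}$, forcing the reduction to cells, is a useful clarification that the paper leaves implicit.
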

\begin{proof} For simplicity we consider only the case $A' = \emptyset$.
  \\[1mm]~\ref{the:Adjunction_Theorem_for_spectra:ioat_upper_ast_Y_is_smooth} The functor sending
  a $G'$-space $X'$ to the $G$-space $\alpha^*X'$ obtained from $X'$ by restriction with
  $\alpha$ is compatible with pushouts and directed
  colimits. If $H' \subseteq G'$ is an open subgroup, then $\alpha^*G'/H'$ is
  $G$-homeomorphic to the disjoint union of its $G$-orbits and each of these $G$-orbit is
  $G$-homeomorphic to $G/H$ for some open subgroup $H \subseteq G$. This implies that
  $\alpha^*X'$ is a smooth $G$-$CW$-complex, if $X'$ is a smooth $G$-$CW$-complex, the
  $n$-skeleton $(\alpha^*X)_n$ of $\alpha^*X$ is defined to be $\alpha^*(X_n)$.
  \\[1mm]~\ref{the:Adjunction_Theorem_for_spectra:iso_alpha_ast}
  and~\ref{the:Adjunction_Theorem_for_spectra:equivalence_of_smooth_G_homology_theories} Given a
  smooth $G'$-$CW$-complex $X'$, we construct a map of spectra
  \[\bfa(X') \colon (\alpha_*\bfE)(X') \to \bfE(\alpha^*X').
  \]
  We get from the definitions, the adjunction $(\alpha_*,\alpha^*)$, and the associativity
  of the smash products over the smooth orbit categories identifications
  \[(\alpha_*\bfE)(X')
    = \bigl(\map_{G'}(G'/{?'},X') \times_{\OrsmG{G'}}
    \map_{G'}(\alpha_*G/{?},G'/{?'})\bigr)_+ \wedge_{\OrsmG{G}} \bfE(G/{?}),
  \]
  and
  \[
    \bfE(\alpha^*X')
    = 
     \map_{G'}(\alpha_*G/?,X')_+   \wedge_{\OrsmG{G}} \bfE(G/{?}).
  \]
  Hence it suffices to construct for every object $G/H$ in $\OrsmG{G}$ a map of
  (unpointed) spaces
  \[\map_{G‘}(G'/{?'},X') \times_{\OrsmG{G'}} \map_{G'}(\alpha_*G/H,G'/{?'}) \to
    \map_{G'}(\alpha_*G/H,X'),
  \]
  which is natural in $G/H$. It is given by $(u,v) \mapsto u \circ v$.

  The collection of the maps of spectra $\bfa(X')$ defines a transformation of smooth
  $G'$-homology theories
  \[\alpha^{\sm}_*(-) \colon H_*^G(\alpha^*(-);\bfE) \to H_n^{G'}(-;\alpha_*\bfE),
  \]
  where the left hand side is indeed a smooth $G'$-homology theory because of
  assertion~\ref{the:Adjunction_Theorem_for_spectra:ioat_upper_ast_Y_is_smooth}.

  It remains to show that $\alpha^{\sm}_*(X')$ is an isomorphism for every 
  smooth $G'$-$CW$-complex $X'$.  Since the
  $G'$-homology theories satisfy the disjoint union axiom, the canonical maps
  \begin{eqnarray*}
    \colim_{n \to \infty} H_*^G(\alpha^*X'_n;\bfE)
    & \xrightarrow{\cong} &
                            H_*^G(\alpha^*X';\bfE);                     
    \\
    \colim_{n \to \infty} H_*^{G'}(X'_n;\alpha_*\bfE)
    & \xrightarrow{\cong} &
                            H_*^{G'}(X';\alpha_*\bfE),    
  \end{eqnarray*}
  are isomorphisms, since the non-equivariant proof in~\cite[Proposition~7.53 on
  page~121]{Switzer(1975)} carries  directly over to the equivariant setting. Hence we can
  assume without loss of generality that $X'$ is $n$-dimensional.  Now using the
  Mayer-Vietoris sequences, the disjoint union axiom, $G$-homotopy invariance and the
  Five-Lemma, one reduces the proof to the special case $X' = G'/H'$ for $H' \subseteq G'$
  an open subgroup.  This special case follows from the definition~\eqref{alpha_ast_bfE}
  and the adjunction~\eqref{adjunction_alpha_ast,_versus_alpha_upper_ast}. This finishes
  the proof of Theorem~\ref{the:Adjunction_Theorem_for_spectra}.
\end{proof}

We conclude from Proposition~\ref{pro:weak_equivalence_U} and
Theorem~\ref{the:Adjunction_Theorem_for_spectra}
using  the obvious version of~\cite[Theorem~3.11]{Davis-Lueck(1998)}.

\begin{theorem}[Adjunction Theorem for categories with $G$-support]%
\label{the:Adjunction_Theorem_for_categories_with_G-support}
Let $(X',A')$ be a pair of smooth $G'$-$CW$-complexes. Let $\calb$ be a category with $G$-support. Then:

  \begin{enumerate}
  \item\label{the:Adjunction_Theorem_for_categories_with_G-support:iso_alpha_ast}
    We obtain an isomorphism of $\IZ$-graded abelian groups
    \[
      \alpha^{\sm}_*(X',A') \colon H_n^G(\alpha^*(X',A');\bfKinfty_{\calb}) \xrightarrow{\cong}
      H_n^{G'}(X',A';\bfKinfty_{\ind_{\alpha} \calb}),
    \]
    which is natural in in $(X',A')$  and $\calb$;
  \item\label{the:Adjunction_Theorem_for_categories_with_G-support:equivalence_of_smooth_G_homology_theories}
    The    collection of the isomorphisms $\alpha^{\sm}_*(X',A')$ yields an isomorphism of smooth
    $G'$-homology theories.

  \end{enumerate}
\end{theorem}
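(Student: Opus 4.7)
The plan is to obtain the isomorphism $\alpha^{\sm}_*(X',A')$ as a composition of two natural isomorphisms: one coming from the Adjunction Theorem for spectra applied to $\bfE = \bfKinfty_{\calb}$, and one coming from the weak equivalence $\bfU$ of $\OrsmG{G'}$-spectra produced by Proposition~\ref{pro:weak_equivalence_U}. No new geometric input is needed; the work is purely one of assembling the two preceding results, together with the standard fact that a weak equivalence of $\OrsmG{G'}$-spectra induces an isomorphism on the associated smooth $G'$-equivariant homology theories (the smooth analogue of~\cite[Theorem~3.11]{Davis-Lueck(1998)} cited in the statement).

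First I would apply Theorem~\ref{the:Adjunction_Theorem_for_spectra} to the covariant $\OrsmG{G}$-spectrum $\bfKinfty_{\calb}$. This directly yields a natural isomorphism
\[
H_n^G(\alpha^*(X',A');\bfKinfty_{\calb}) \xrightarrow{\cong} H_n^{G'}(X',A';\alpha_*\bfKinfty_{\calb}).
\]
Next I would invoke Proposition~\ref{pro:weak_equivalence_U} to get the weak homotopy equivalence of covariant $\OrsmG{G'}$-spectra
\[
\bfU \colon \alpha_* \bfKinfty_{\calb} \xrightarrow{\simeq} \bfKinfty_{\ind_{\alpha} \calb},
\]
which is natural in $\calb$. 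Applying the smooth version of~\cite[Theorem~3.11]{Davis-Lueck(1998)} to $\bfU$, which transports weak equivalences of $\OrsmG{G'}$-spectra to isomorphisms of the associated smooth $G'$-equivariant homology theories, I obtain a second natural isomorphism
\[
H_n^{G'}(X',A';\alpha_*\bfKinfty_{\calb}) \xrightarrow{\cong} H_n^{G'}(X',A';\bfKinfty_{\ind_{\alpha} \calb}).
\]
Composing these two isomorphisms defines the desired $\alpha^{\sm}_*(X',A')$ in assertion~\ref{the:Adjunction_Theorem_for_categories_with_G-support:iso_alpha_ast}.

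Naturality in $(X',A')$ and $\calb$ follows because both building blocks are natural in the appropriate arguments: Theorem~\ref{the:Adjunction_Theorem_for_spectra} is natural in the pair and in $\bfE$, and Proposition~\ref{pro:weak_equivalence_U} is natural in $\calb$, so naturality of the composite is automatic. For assertion~\ref{the:Adjunction_Theorem_for_categories_with_G-support:equivalence_of_smooth_G_homology_theories}, the third part of Theorem~\ref{the:Adjunction_Theorem_for_spectra} already gives an isomorphism of smooth $G'$-homology theories, and the second isomorphism is induced from a weak equivalence of $\OrsmG{G'}$-spectra, which again yields an isomorphism of smooth $G'$-homology theories by the smooth analogue of~\cite[Theorem~3.11]{Davis-Lueck(1998)}. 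Since the composite of two isomorphisms of smooth $G'$-homology theories is again one, the claim follows. The only mildly delicate point, which is not a genuine obstacle but needs to be checked once, is that the smooth variant of~\cite[Theorem~3.11]{Davis-Lueck(1998)} applies in the present setting; this is immediate because $O^G(X')$ is a $\OrsmG{G'}$-$CW$-complex for smooth $G'$-$CW$-complexes $X'$ (Remark~\ref{rem:free_orsm(G)-complex}), so the standard cellular induction over the skeleta carries over verbatim.
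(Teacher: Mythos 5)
Your proof is correct and follows exactly the route the paper takes: the paper states that Theorem~\ref{the:Adjunction_Theorem_for_categories_with_G-support} is deduced by combining Proposition~\ref{pro:weak_equivalence_U} and Theorem~\ref{the:Adjunction_Theorem_for_spectra} via the smooth analogue of~\cite[Theorem~3.11]{Davis-Lueck(1998)}. Your write-up simply makes explicit the two-step composition and the naturality check that the paper leaves to the reader.
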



\subsection{Proof of Theorem~\ref{the:(FJ_cs)_passes_to_closed_subgroup_modulo_compact_subgroups}}%
\label{subsec:Proof_of_Theorem_ref(the:(FJ_cs)_passes_to_closed_subgroup_modulo_compact_subgroups)}
We begin with assertion~\ref{the:(FJ_cs)_passes_to_closed_subgroup_modulo_compact_subgroups:(Reg)}.
Consider a compact open subgroup $K' \subseteq G'$.
We obtain from~\eqref{decomposing_calb(S)_into_(calb(O)_for_orbits_O}
and Lemma~\ref{lem:W_natural_equivalence}
an equivalence of additive categories
  \[
    \bigoplus_{O' \in G\backslash \alpha^*(G'/K')} \calb[O']_{\oplus}[\IZ^d]
    \xrightarrow{\simeq}
    \ind_{\alpha} \calb[G'/K']_{\oplus}[\IZ^d].
  \]
  Since the kernel of $\alpha$ is compact, each $O'$ is a proper smooth $G$-orbit. We
  conclude from condition  (Reg) that $\calb(O')$ is $l(d)$- uniformly regular
  coherent. Since the direct sum (over an arbitrary index set) of $l(d)$-uniformly regular
  coherent categories is again $l(d)$-uniformly regular coherent,
  see~\cite[Lemma~11.3~(ii)]{Bartels-Lueck(2020additive)},
  $\ind_{\alpha} \calb[G'/K']_{\oplus}[\IZ^d]$ is $l(d)$- uniformly regular coherent.

  Finally we prove
  assertion~\ref{the:(FJ_cs)_passes_to_closed_subgroup_modulo_compact_subgroups:FJ_cs}.
  If $\COM$ is the family of compact subgroups, then the canonical map
  $\EGF{G}{\COP} \to \EGF{G}{\COM}$ is a $G$-homotopy equivalence and the canonical map
  $\EGF{G'}{\COP} \to \EGF{G'}{\COM}$ is a $G'$-homotopy equivalence,
  see~\cite[Lemma~3.5]{Lueck(2005s)}.  Since the kernel of $\alpha$ is compact,
  $\alpha^*\EGF{G'}{\COM}$ is a model for $\EGF{G}{\COM}$. We conclude that
  $\alpha^*\EGF{G'}{\COP}$ is a model for $\EGF{G}{\COP}$.  Obviously
  $\alpha^*G'/G' = G/G$ holds. Hence we get from the Adjunction
  Theorem~\ref{the:Adjunction_Theorem_for_categories_with_G-support} for categories with
  $G$-support a commutative diagram
  \[
    \xymatrix{H_n^G(\EGF{G}{\COP};\bfK_{\calb})\ar[d]_{\cong} \ar[r] &
      H_n^G(G/G;\bfK_{\calb}) \ar[d]^{\cong}
      \\
      H_n^{G'}(\EGF{G'}{\COP};\bfK_{\ind_{\alpha} \calb}) \ar[r] &
      H_n^{G'}(G'/G';\bfK_{\ind_{\alpha} \calb})}
  \]
  whose vertical arrows are bijective.  This finishes the proof of
  Theorem~\ref{the:(FJ_cs)_passes_to_closed_subgroup_modulo_compact_subgroups}.


\typeout{------------- Section 5: Hecke categories with $G$-support ------------------}

\section{Hecke categories with $G$-support}%
\label{sec:Hecke_categories_with_G-support}

Next we enrich the notion of category with $G$-support in the sense of
Definition~\ref{def:category_with_G-support} so that with this new notion one can hope
that the answer to Problem~\ref{pro:Which_td-groups_G_have_the_property_(FJ_cs)?}  has a
chance to be positive.

Recall that for two subsets $A,B \subseteq G$ we put
$A \cdot B = \{a\cdot b \mid a \in A, b \in B\} \subseteq G$.

\begin{definition}[Hecke categories with $G$-support]%
  \label{def:Hecke_categories_with_G-support}
  A \emph{Hecke category with $G$-support} is a category $\calb$ with $G$-support such that the following holds.

  \begin{enumerate}
        \item\label{def:Hecke_categories_with_G-support:subgr} \emph{Subgroups}\\
          $\supp B$ is a compact subgroup of $G$ for all objects $B$. 
          For any morphism $\varphi \colon B \to B'$ we have $\supp \varphi = \supp B' \cdot \supp \varphi \cdot \supp B'$.
         The sets $\supp B' \backslash \supp \varphi$ and $\supp \varphi / \supp B$ are both finite;
  
 \item\label{def:Hecke_categories_with_G-support:transitivity} \emph{Translation}\\
    For every object $B$ in $\calb$ and element $g \in G$ there exists an object $B'$
    together with an isomorphism $\psi \colon B \xrightarrow{\cong} B'$ in $\calb$
    such that $\supp(B') = g\supp(B)g^{-1}$, $\supp(\psi) =g\supp(B)$, and $\supp(\psi^{-1}) \subseteq g^{-1}\supp(B'')$
    holds; 

\item\label{def:Hecke_categories_with_G-support:Morphisms-Addivity} \emph{Morphism Additivity}\\
  For any finite disjoint decomposition
  \[
    \supp(\varphi) = \coprod_{i = 1}^m L_i
  \]
  for closed subsets
  $L_i \subseteq \supp(\varphi)$ satisfying $\supp(B') \cdot L_i \cdot \supp(B) = L_i$ for
  $i = 1,2, \ldots, m$, there is a  collection of morphisms
  $\varphi_i \colon B \to B'$ for $i = 1,2, \ldots, m$ such that
  $\varphi = \sum_{i = 1}^m \varphi_i$ and $\supp(\varphi_i)= L_i$ hold;

\item\label{def:Hecke_categories_with_G-support:Cofinality_for_the_support_of_objects}
  \emph{Support cofinality}\\
   For any object $B$ and any subgroup $L \subseteq \supp(B)$ of finite index, there is
an object $B|_L$ and morphisms $i_{B,L} \colon B \to B|_L$ and
$r_{B,L} \colon B|_L\to B$ such that $\supp(B|_L) = L$,
$\supp(i_{B,L}) = \supp(r_{B,L}) = \supp(B)$, and $r_{B;L} \circ i_{B,L} = \id_B$
hold.

Moreover, for any object $B$ and any subgroups $L' \subseteq L \subseteq \supp(B)$
of finite index we require $(B|_L)_{L'} = B|_{L'}$,
$i_{B,L'} = i_{B|_L,L'} \circ i_{B,L}$, and
$r_{B,L'} = r_{B,L} \circ r_{B|_L,L'}$ and for $L = \supp(B)$ we require  $B|_L = B$ and $i_{B,L} = r_{B,L} = \id_B$.
\end{enumerate}
\end{definition}

One can view condition  \emph{Morphism Additivity} as  a kind of sheaf-condition.
Note  that \emph{Translation} and \emph{Support cofinality} are not  just  conditions, an additional datum
is required.

\begin{lemma}\label{lem:uniqueness_in_morphisms_additivity}\
\begin{enumerate}
\item\label{lem:uniqueness_in_morphisms_additivity:general} Let
  $\varphi_i \colon B \to B'$ be a collection of morphisms for $i = 1, \ldots , r$ such
  that $\supp(\varphi_i) \cap \supp(\varphi_j) = \emptyset$ holds for $i \not= j$ and we
  have $\sum_{i=1}^r \varphi_i = 0$.  Then $\varphi_i = 0$ for $i = 1, \ldots , r$;

\item\label{lem:uniqueness_in_morphisms_additivity:axiom}
  The collection of morphisms $\varphi_i$ appearing in the axiom \emph{Morphism Additivity}
  is unique.

\end{enumerate}
\end{lemma}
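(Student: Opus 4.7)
The plan is to deduce \ref{lem:uniqueness_in_morphisms_additivity:axiom} from \ref{lem:uniqueness_in_morphisms_additivity:general}, and to prove \ref{lem:uniqueness_in_morphisms_additivity:general} via a short preliminary observation about the support of negatives. The support axioms of Definition~\ref{def:category_with_G-support} handle sums and compositions gracefully, but a priori they say almost nothing about $\supp(-\psi)$; the first order of business is to nail this down in the Hecke setting.

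The key auxiliary claim I would prove is that for every morphism $\psi \colon B \to B'$ in a Hecke category with $G$-support, $\supp(-\psi) \subseteq \supp(\psi)$. To see this I would write $-\psi = (-\id_{B'}) \circ \psi$, so that Definition~\ref{def:category_with_G-support}\ref{def:category_with_G-support:(v_circ_u)} combined with $\supp(-\id_{B'}) = \supp(B')$ from~\ref{def:category_with_G-support:(plus)} gives $\supp(-\psi) \subseteq \supp(B') \cdot \supp(\psi)$. The right-hand side then collapses back to $\supp(\psi)$ because of Definition~\ref{def:Hecke_categories_with_G-support}\ref{def:Hecke_categories_with_G-support:subgr}: $\supp(B')$ is a subgroup, and $\supp(\psi) = \supp(B') \cdot \supp(\psi) \cdot \supp(B)$, so left multiplication by $\supp(B')$ is idempotent on $\supp(\psi)$.

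Granted this, part \ref{lem:uniqueness_in_morphisms_additivity:general} will be essentially immediate. From $\sum_{i=1}^r \varphi_i = 0$ I rewrite $\varphi_j = \sum_{i \neq j}(-\varphi_i)$, and iterated use of axiom~\ref{def:category_with_G-support:(plus)} together with the preliminary claim yields $\supp(\varphi_j) \subseteq \bigcup_{i \neq j}\supp(-\varphi_i) \subseteq \bigcup_{i \neq j}\supp(\varphi_i)$. By the disjointness hypothesis this forces $\supp(\varphi_j) = \emptyset$, so $\varphi_j = 0$ by axiom~\ref{def:category_with_G-support:(emptyset)}.

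For part \ref{lem:uniqueness_in_morphisms_additivity:axiom} I would apply \ref{lem:uniqueness_in_morphisms_additivity:general} to componentwise differences. Given two collections $\{\varphi_i\}$ and $\{\varphi'_i\}$ as in Morphism Additivity for the same $\varphi$ and the same decomposition $\supp(\varphi) = \coprod_i L_i$, the morphisms $\psi_i := \varphi_i - \varphi'_i$ sum to zero and satisfy $\supp(\psi_i) \subseteq \supp(\varphi_i) \cup \supp(-\varphi'_i) \subseteq L_i$ by axiom~\ref{def:category_with_G-support:(plus)} and the preliminary claim. Since the $L_i$ are disjoint, part~\ref{lem:uniqueness_in_morphisms_additivity:general} forces $\psi_i = 0$, i.e.\ $\varphi_i = \varphi'_i$. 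The only non-routine step is really the reduction $\supp(-\psi) \subseteq \supp(\psi)$, and this is precisely the place where the Hecke-subgroup axiom does the work that the bare support axioms cannot; everything else is formal bookkeeping.
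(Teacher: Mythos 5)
Your proof is correct and rests on the same pivotal observation as the paper's, namely that $\supp(-\psi) \subseteq \supp(\psi)$, which the paper establishes inline via the identical computation $\supp(-\psi) = \supp((-\id_{B'})\circ\psi) \subseteq \supp(B')\cdot\supp(\psi) = \supp(\psi)$ using the subgroup axiom of Definition~\ref{def:Hecke_categories_with_G-support}. The one organizational difference is that the paper proves part \ref{lem:uniqueness_in_morphisms_additivity:general} by induction on $r$, peeling off $\varphi_{r+1} = -\sum_{i\le r}\varphi_i$, whereas you instead observe directly that each $\varphi_j = \sum_{i\neq j}(-\varphi_i)$ already forces $\supp(\varphi_j) \subseteq \bigcup_{i\neq j}\supp(\varphi_i)$ and hence $\supp(\varphi_j)=\emptyset$ by disjointness; your version is a touch shorter and dispenses with the induction, though the content is identical. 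For part \ref{lem:uniqueness_in_morphisms_additivity:axiom} the paper simply refers to part \ref{lem:uniqueness_in_morphisms_additivity:general}, and your argument via the differences $\psi_i := \varphi_i - \varphi_i'$ is exactly the intended reduction, including the point that $\supp(\psi_i)\subseteq L_i$ follows from the support-of-negatives claim together with $\supp(\varphi_i)=\supp(\varphi_i')=L_i$.
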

\begin{proof}~\ref{lem:uniqueness_in_morphisms_additivity:general} We use induction over
  $r$. The induction $r = 1$ beginning is trivial, the induction step from $r$ to $r+1$
  done as follows.  Put $\varphi' = \sum_{i=1}^r \varphi_i$. Then
  $\supp(\varphi') \subseteq \bigcup_{i = 1}^r \supp(\varphi_i)$ and hence
  $\supp(\varphi') \cap \supp(\varphi_{r+1}) = \emptyset$. Since
  $0 = \sum_{i=1}^{r+1} \varphi_i = 0$, we have $\varphi_{r+1} = - \varphi'$. Since
  \begin{multline*}
    \supp(-\varphi') = \supp((-\id_{B'}) \circ \varphi') \subseteq  \supp(-\id_{B'}) \cdot  \supp(\varphi')
    \\
    = \supp(B') \cdot  \supp(\varphi') = \supp(\varphi')
  \end{multline*}
  holds, we get  $\supp(\varphi_{r+1}) = \supp(-\varphi') =\supp(\varphi')$. We conclude 
  $\supp(\varphi_{r+1}) = \supp(\varphi') = \emptyset$ which implies $\varphi_{r+1} = 0$ and
  $\varphi' = 0$. We get $\varphi_i = 0$ for $i = 1,2, \ldots, r$ from the induction hypothesis
  applied to $\varphi' = \sum_{i=1}^r \varphi_i$.
  \\[1mm]~\ref{lem:uniqueness_in_morphisms_additivity:axiom} This follows directly from
  assertion~\ref{lem:uniqueness_in_morphisms_additivity:general}
\end{proof}

One easily checks

\begin{lemma}\label{lem:induction_for_G-categories_with_G-action}
  Let $\alpha \colon G \to G'$ be a group homomorphism of td-groups.
  Consider a Hecke category $\calb$ with $G$-support in the sense of 
  Definition~\ref{def:Hecke_categories_with_G-support}

  Then the category with $G'$-support
  $\ind_{\alpha} \calb$ associated to the underlying category with $G$-support $\calb$
  defined in~Subsection~\ref{subsec:The_definition_of_the_induced_category_with_G'-support}
  inherits the structure of a Hecke category with $G'$-support.
\end{lemma}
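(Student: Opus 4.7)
The plan is simply to verify, one by one, the four Hecke axioms of Definition~\ref{def:Hecke_categories_with_G-support} for the category $\ind_{\alpha}\calb$, whose underlying $\IZ$-category and support function on $G'$ are already fixed in~\eqref{morphisms_in_ind_iota_calb},~\eqref{supp_ind_iota_calb}, and~\eqref{supp_ind_iota_calb_object}. Throughout, for a morphism $\varphi \colon (B_0,g_0')\to(B_1,g_1')$ in $\ind_\alpha \calb$ I will use the conjugation map
\[
\rho \colon \supp_{\calb}(\varphi) \longrightarrow \supp_{\ind_\alpha \calb}(\varphi),
\qquad x \longmapsto g_1' \alpha(x) g_0'^{-1},
\]
which is continuous and surjective.

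For the \emph{Subgroups} axiom, $\supp(B_0,g_0')= g_0'\alpha(\supp B_0)g_0'^{-1}$ is a compact subgroup of $G'$ since $\alpha$ is a continuous group homomorphism and $\supp B_0$ is compact subgroup of $G$. The identity $\supp B_1' \cdot \supp\varphi \cdot \supp B_0' = \supp\varphi$ in $\ind_\alpha \calb$ is a direct calculation from the definitions, using the corresponding identity in $\calb$. Finiteness of the double cosets follows because $\alpha$ and the two-sided translation by $(g_1'^{-1}, g_0')$ induce surjections $\supp B_1 \backslash \supp_\calb\varphi \twoheadrightarrow \supp B_1' \backslash \supp\varphi$ and similarly on the right. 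For \emph{Translation}, given an object $(B_0,g_0')$ and $g' \in G'$, take $(B_0, g'g_0')$ as the new object and $\id_{B_0}$ as the witness isomorphism $\psi$; the three support identities are immediate from~\eqref{supp_ind_iota_calb} and~\eqref{supp_ind_iota_calb_object}.

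For \emph{Morphism Additivity}, given a finite decomposition $\supp_{\ind_\alpha \calb}(\varphi) = \coprod_{i=1}^{m} L_i'$ into closed subsets stable under the required two-sided multiplication, set $L_i := \rho^{-1}(L_i') \subseteq \supp_\calb \varphi$. These $L_i$ are closed since $\rho$ is continuous, they partition $\supp_\calb \varphi$, and a short calculation using $\rho(\supp B_1 \cdot L_i \cdot \supp B_0) \subseteq L_i'$ (together with the fact that $\supp B_0, \supp B_1$ are subgroups, hence contain the identity) shows $\supp B_1 \cdot L_i \cdot \supp B_0 = L_i$. The \emph{Morphism Additivity} axiom in $\calb$ then produces morphisms $\varphi_i \colon B_0 \to B_1$ with $\sum \varphi_i = \varphi$ and $\supp_\calb(\varphi_i)=L_i$. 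Viewed as morphisms in $\ind_\alpha \calb$ their supports are $\rho(L_i) = L_i'$, as required.

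Finally, for \emph{Support cofinality}, let $(B_0,g_0')$ be an object and $M' \subseteq \supp(B_0,g_0') = g_0'\alpha(\supp B_0)g_0'^{-1}$ a finite-index subgroup. Let $M$ be the preimage of $g_0'^{-1}M'g_0' \subseteq \alpha(\supp B_0)$ under the surjection $\supp B_0 \twoheadrightarrow \alpha(\supp B_0)$; then $M \subseteq \supp B_0$ is a finite-index subgroup with $g_0'\alpha(M)g_0'^{-1}=M'$. Define $(B_0,g_0')|_{M'} := (B_0|_M, g_0')$ with $i,r$ induced by $i_{B_0,M}, r_{B_0,M}$ in $\calb$; the support and composition relations transfer directly from $\calb$, and the compatibility conditions for nested subgroups $L' \supseteq M'$ follow from those in $\calb$ once one checks that the assignment $M' \mapsto M$ above respects inclusions. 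The only mild care needed is in the last step, namely that finite-index subgroups of $\supp(B_0,g_0')$ correspond to finite-index subgroups of $\supp B_0$ containing $\supp B_0 \cap \ker \alpha$; this is the main (and essentially only) place where $\ker\alpha \neq 1$ matters, and it gives no obstruction because we pass through preimages under $\alpha$.
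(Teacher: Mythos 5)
Your proposal is correct and takes essentially the same approach as the paper: the paper's proof verifies only \emph{Translation} explicitly, using the same witness (left-translate the second coordinate, with $\id_B$ as the isomorphism), and leaves the other axioms to the reader. Your writeup simply carries out those remaining routine verifications, and the key technical points — surjectivity of the conjugation map $x \mapsto g_1'\alpha(x)g_0'^{-1}$ for finiteness of double cosets and for $\rho(L_i)=L_i'$, and the preimage construction of $M$ from $M'$ for support cofinality — are handled correctly.
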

\begin{proof}
  We leave the elementary proof that all the axioms appearing in
Definition~\ref{def:category_with_G-support} are satisfied for
$\ind_{\alpha} \calb$ to the reader except for \emph{Translation}.
Given an  object $(B,g')$ in $\ind_{\alpha} \calb$ and an element $g'_0 \in G'$,
we consider the object $(B,g_0'g')$. Its support satisfies
\begin{multline*}
  \supp_{\ind_{\alpha} \calb}(B,g_0'g')
  \stackrel{\eqref{supp_ind_iota_calb_object}}{=} 
  g_0'g'\alpha(\supp_{\calb}(B))(g_0'g)^{-1}
  \\
  =
  g_0'\bigl(g'\alpha(\supp_{\calb}(B)g^{-1})\bigr)g_0'^{-1}
  \stackrel{\eqref{supp_ind_iota_calb_object}}{=}   g_0'\supp_{\ind_{\alpha} \calb}\bigl(B,g')g_0'^{-1}.
\end{multline*}
Let
$\psi \colon (B,g')\to  (B,g_0'g')$ be the morphism in
$\ind_{\alpha} \calb$ given by $\id_B$ in $\calb$. Its support
satisfies
\begin{multline*}
\supp_{\ind_{\alpha} \calb}(\psi)
\stackrel{\eqref{supp_ind_iota_calb}}{=}  g_0'g'\alpha(\supp_{\calb}(\id_{\calb}))g'^{-1}
= g_0'g'\alpha(\supp_{\calb}(B))g'^{-1}
\\
\stackrel{\eqref{supp_ind_iota_calb_object}}{=}  g_0'\supp_{\ind_{\alpha} \calb}(B,g').
\end{multline*}
Let $\psi' \colon (B,g_0'g') \to (B,g') $ be the morphism in
$\ind_{\alpha} \calb$ given by $\id_B$ in $\calb$. Its support satisfies
\begin{multline*}
\supp_{\ind_{\alpha} \calb}(\psi')
\stackrel{\eqref{supp_ind_iota_calb}}{=}  g'\alpha(\supp_{\calb}(\id_{\calb}))(g_0'g')^{-1}
= g'\alpha(\supp_{\calb}(B))g'^{-1}g_0'
\\
= g_0'^{-1} \bigl((g_0'g')\alpha(\supp_{\calb}(B))(g_0'g')^{-1}\bigr)
\stackrel{\eqref{supp_ind_iota_calb_object}}{=}  g_0'^{-1}\supp_{\ind_{\alpha} \calb}(B,g_0g').
\end{multline*}
The morphisms $\psi$ and $\psi'$ in $\ind_{\alpha} \calb$ are inverse to one another.
\end{proof}

  \begin{notation}[$\calb|_H$]\label{not:calb|_H}
 For a subgroup $H \subseteq G$, define $\calb|_H$ to be $\IZ$-subcategory of $\calb$
  consisting of objects $B$ and morphisms $\varphi \colon B \to B'$ in $\calb$ for which
  $\supp_{\calb}(B)$ and $\supp_{\calb}(\varphi)$ are contained in $H$.
\end{notation}

The main benefit of the axiom \emph{Translation}  appearing in
  Definition~\ref{def:Hecke_categories_with_G-support} is the following lemma

\begin{lemma}\label{lem:comparing_calb|_H_and_calb(G(H)}
  There is an equivalence of $\IZ$-categories
  \[
    F \colon \calb|_H \xrightarrow{\simeq} \calb[G/H].
  \]
\end{lemma}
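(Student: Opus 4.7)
The plan is to define the functor $F$ by sending an object $B \in \calb|_H$ to $(eH,B) \in \calb[G/H]$ and a morphism $\varphi \colon B \to B'$ to the same morphism viewed in $\calb[G/H]$. This is well-defined on objects because $\supp_{\calb}(B) \subseteq H = G_{eH}$, and on morphisms because the isotropy $G_{eH,eH} = H$ coincides with the support condition $\supp(\varphi) \subseteq H$ defining $\calb|_H$. Functoriality and $\IZ$-linearity are immediate from the corresponding properties in $\calb$.

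Full faithfulness is essentially a tautology: for objects $B,B'$ in $\calb|_H$, a morphism $(eH,B) \to (eH,B')$ in $\calb[G/H]$ is by definition a morphism $\varphi \colon B \to B'$ in $\calb$ with $\supp(\varphi) \subseteq G_{eH,eH} = H$, which is precisely an element of $\mor_{\calb|_H}(B,B')$. So $F$ induces the identity on morphism groups.

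The main step is essential surjectivity, where the \emph{Translation} axiom of Definition~\ref{def:Hecke_categories_with_G-support} does the work. Given an arbitrary object $(x,B) \in \calb[G/H]$, write $x = gH$ for some $g \in G$, so that $\supp_{\calb}(B) \subseteq G_{gH} = gHg^{-1}$. Apply \emph{Translation} with the element $g^{-1}$ to produce an object $B'$ with $\supp_{\calb}(B') = g^{-1}\supp_{\calb}(B)g \subseteq H$ and an isomorphism $\psi \colon B \xrightarrow{\cong} B'$ in $\calb$ satisfying $\supp(\psi) = g^{-1}\supp_{\calb}(B)$ and $\supp(\psi^{-1}) \subseteq g\supp_{\calb}(B')$. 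In particular $B' \in \calb|_H$, so $F(B') = (eH,B')$. It remains to check that $\psi$ defines an isomorphism $(gH,B) \cong (eH,B')$ in $\calb[G/H]$; this amounts to verifying the inclusions
\[
\supp(\psi) \subseteq G_{gH,eH} = Hg^{-1}, \qquad \supp(\psi^{-1}) \subseteq G_{eH,gH} = gH,
\]
which follow from $g^{-1}\supp_{\calb}(B) \subseteq g^{-1}(gHg^{-1}) = Hg^{-1}$ and $g\supp_{\calb}(B') \subseteq gH$ respectively.

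The only non-trivial ingredient is the \emph{Translation} axiom, which is precisely designed to allow one to conjugate the support of an object by arbitrary group elements; without it essential surjectivity would fail because an object $B \in \calb$ with $\supp_{\calb}(B)\subseteq gHg^{-1}$ need not be isomorphic to one whose support lies in $H$. Once this axiom is invoked, all remaining verifications are routine bookkeeping with the support axioms of Definition~\ref{def:category_with_G-support}.
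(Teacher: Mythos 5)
Your proof is correct and follows essentially the same route as the paper's: define $F$ as the evident inclusion $B \mapsto (eH,B)$, observe full faithfulness is tautological, and establish essential surjectivity via the \emph{Translation} axiom applied to $g^{-1}$. You actually spell out the support verifications ($\supp(\psi) \subseteq G_{gH,eH} = Hg^{-1}$ and $\supp(\psi^{-1}) \subseteq G_{eH,gH} = gH$) more carefully than the paper's terse argument does.
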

\begin{proof}
  The functor $F$ sends an object $B$ to the object $(B,eH)$ and a morphism
  $\varphi \colon B \to B'$ to the morphism $(B,eH) \to (B,eH)$ given by $\varphi$ again.
  Obviously $F$ is faithfull and full. In order to show that it is an equivalence of
  $\IZ$-categories, it suffices to show that any object $(B,gH)$ in $\calb[G/H]$ is
  isomorphic to an object in the image of $F$.  This follows from the fact that we obtain
  an isomorphism $\psi \colon (B,gH)  \xrightarrow{\cong} (B',eH)$ in $\calb[G/H]$,
  if $B'$ is an object and $\psi' \colon B \xrightarrow{\cong} B'$ is an isomorphism
  in $\calb$ with $\supp(B') = g^{-1}\supp(B)g$, $\supp(\psi) = g^{-1}\supp(B)$,
  and $\supp(\psi^{-1}) = g\supp(B)$.
  The existence of the pair $(B',\psi)$ is guaranteed by  \emph{Translation}.
 \end{proof}

 In particular we get for every subgroup $H \subseteq G$ and $n \in \IZ$ an isomorphism
 \begin{equation}
   K_n((\calb|_H)_{\oplus}) \cong \pi_n(\bfK_{\calb}(G/H)).
   \label{K_n(calb|_H)_cong_pi_n(bfK_calb)(G/H))}
 \end{equation}

 Our main example of a Hecke category with $G$-support coming from Hecke algebras will be
 discussed in Section~\ref{sec:The_example_coming_from_Hecke_algebras}.

\begin{remark}[Discrete group $G$]\label{rem:FJC_discrete_groups}
  Suppose that the  td-group $G$ is discrete.

  Then the $\COP$-Farrell-Jones Conjecture for Hecke
  algebras~\ref{con:FJC_for_Hecke_algebras} is the same as the $K$-theoretic Farrell-Jones
  Conjecture with coefficients in the ring $R$ and the family $\FIN$ of finite subgroups for a
  uniformly regular ring $R$, see~\cite[Conjecture~12.1 and
  Theorem~12.39]{Lueck(2022book)}. Moreover, the $\CVCYC$-Farrell-Jones Conjecture
  of~\cite[Conjecture~5.12]{Bartels-Lueck(2023K-theory_red_p-adic_groups)} agrees with
  $K$-theoretic Farrell-Jones Conjecture with coefficients in  additive categories,
  see~\cite[Conjecture~12.11]{Lueck(2022book)}. This follows from the following
  considerations concerning Hecke categories with $G$-support and additive $G$-categories.

  Let $\cala$ be a $G$-$\IZ$-category, i..e, a $\IZ$-category with $G$-action by
  automorphisms of $\IZ$-categories. Then we can consider the $\IZ$-category
  $\cala[G]$. It has the same set of objects as $\cala$. A morphism
  $\sum_{g \in G} f_g \cdot g \colon A \to A'$ is a finite sum of morphisms in $\cala$,
  where $f_g$ has $gA$ as source and $A'$ as target.  The composition is given by
  \[\biggl(\sum_{g'' \in G} f'_{g''}  \cdot g''\biggr) \circ \biggl(\sum_{g' \in G} f_{g'} \cdot
    g'\biggr) := \sum_{g \in G} \biggl(\sum_{\substack{g'g'' \in G\\ g''g' = g}} f'_{g''}
    \circ g''f_{g'}\biggr).
  \]
  It becomes a Hecke category with $G$-support in the sense of
  Definition~\ref{def:Hecke_categories_with_G-support}, if we define the support of every
  object $A$ to be $\{e\}$ and the support of a morphism $\sum_{g \in G} f_g \cdot g$ to
  be $\{g \in G  \mid  f_g \not=0\}$.

  Now let $\calb$ be a Hecke category with $G$ support.  Let $\calb_e$ be the subcategory
  of $\calb$ consisting of all morphisms and objects with support $\{e\}$.  Thanks to the axiom
  \emph{Translation} we can choose for $g_0 \in G$ and $A_0 \in \ob(\calb_e)$ an isomorphism
  $\psi(g_0,A_0) \colon A_0 \xrightarrow{\cong} B(g_0,A_0)$ in $\calb$ with $B(g_0,A_0) \in \ob(\calb_e)$ and
  $\supp(\psi(g_0,A_0)) = \{g_0\}$.  We require $B(e,A_0) = A_0$ and $\psi(e,A_0) = \id_{A_0}$.

  Let $\cala$ be the following $G$-$\IZ$-category.  Objects are pairs $(g_0,A_0)$ with
  $g_0 \in G$ and $A_0 \in \ob(\calb_e)$.  Morphisms $(g_0,A_0) \to (g_1,A_1)$ are
  morphisms $\varphi \colon B(g_0,A_0) \to B(g_1,A_1)$ in $\calb_e$.  We define a
  $G$-action on $\cala$ as follows.  For $g \in G$ and $(g_0,A_0) \in \ob(\cala)$ we set
  $g\cdot(g_0,A_0) := (gg_0,A_0)$.  To define the $G$-action on morphisms, let
  $\varphi \colon (g_0,A_0) \to (g_0,A_1)$ be a morphism in $\cala$, i.e.,
  $\varphi \colon B(g_0,A_0) \to B(g_1,A_1)$ is a morphism in $\calb_e$.  Then
  $g \cdot \varphi \colon (gg_0,A_0) \to (gg_1,A)_1$ is the morphism
  $B(gg_0,A_0) \to B(gg_1,A_1)$ in $\calb_e$ defined by requiring that the following
  diagram in $\calb$ commutes
    \begin{equation*}
    	\xymatrix{B(gg_0,A_0)  \ar[d]_{\Psi(gg_0,A)^{-1}} \ar[r]^{g \cdot \varphi}
    	           & B(gg_1,A_1)  
    	           \\
    	           A \ar[d]_{\psi(g_0,A_0)} & A' \ar[u]_{\psi(gg_1,A_1)}
    	           \\
    	           B(g_0,A_0) \ar[r]^{\varphi}
    	           & B(g_1,A_1) \ar[u]_{\psi(g_1,A_1)^{-1}}
    	}
    \end{equation*} 
    Now consider $\cala[G]$.  It is a Hecke category with
    $G$-support as described above. The point is that there is a functor of $\IZ$-categories
    \begin{equation}
      F \colon \cala[G] \to \calb
      \label{F_colon_cala[G]_to_calb}
    \end{equation}
    such that $F$ respects the supports and induces for every $G$-set $S$ an equivalence of additive categories
    \[
      \Idem(F[S]_{\oplus}) \colon \Idem((\cala[G])[S]_{\oplus}) \to \Idem(\calb[S]_{\oplus}),
     \]
     where $(\cala[G])[S]$ and $\calb[S]$ have been defined in
     Subsection~\ref{subsec:The_smooth_K-theory_Or(G)-spectrum_associated_to_a_category_with_G_suppport}.

    The construction of $F$ is as follows.  Set $F(g_0,A_0) := B(g_0,A_0)$.  Recall that a morphism
    $\varphi \colon (g_0,A_0) \to (g_1,A_1)$ in $ \cala[G]$ is of the form
    $\varphi = \sum_{g_0 \in G} \varphi_g \cdot g$, where
    $\varphi_g \colon B(gg_1,A_1) \to B(g_1,A_1)$ is a morphism in $\calb_e$.  Now we put
    $F(\varphi) = \sum_{g} F(\varphi_g \cdot g) \colon B(g_0A_0) \to B(g_1,A_1)$,
    where $F(\varphi_g \cdot g)$ is the composite
    \[B(g_0,A_0) \xrightarrow{\Psi(g_0,A_0)^{-1}} A_0 \xrightarrow{\psi(gg_0,A_0)}  B(gg_0,A_0)
      \xrightarrow{\varphi_g} B(g_1,A_1).
    \]
    One easily checks  that $F$ respects the support of objects and morphisms and the $\IZ$-structures.
    Moreover, $F$ is full and faithful by the following consideration.
    Consider two objects $(g_0,A_0)$ and $(g_1,A_1)$ in $\cala[G]$. Let 
    $\mu \colon F(g_0A_0) = B(g_0,A_0) \to F(g_1A_1) =  B(g_1,A_1)$ be any morphisms in $\calb$
    from $F(g_0A_0)$ to $F(g_1,A_1)$.
    Because of the axiom \emph{Morphism Additivity}
    and Lemma~\ref{lem:uniqueness_in_morphisms_additivity}~\ref{lem:uniqueness_in_morphisms_additivity:axiom}
    there is precisely one collection of morphisms
    $\{\mu_g \colon B(g_0,A_0) \to  B(g_1,A_1) \mid g \in \supp(\psi)\}$
    such that $\supp(\mu_g) = \{g\}$ holds for $g \in \supp(\mu)$ and we have $\mu = \sum_{g \in G} \mu_g$.
    Define a morphisms $\varphi_g$ in $ \calb_e$  by the composite
    \[\varphi_g \colon B(gg_0,A_0) \xrightarrow{\psi(gg_0,A_0)^{-1}} A_0
      \xrightarrow{\psi(g_0,A_0)} B(g_0,A_0) \xrightarrow{\varphi_g} B(g_1,A_1).
  \]
  Define a morphism in $\cala[G]$ by $\phi = \sum_{g \in \supp(\mu)} \varphi_g \colon (g_0,A_0) \to (g_1,A_1)$.
  One easily checks  that $F(\varphi) = \mu$ and any other morphism
  $\phi' \colon (g_0,A_0) \to (g_1,A_1)$ in $\cala[G]$ with $F(\varphi') = \mu$ satisfies $\varphi = \varphi'$.

  Consider any object $B$ in $\calb$. By the axiom \emph{Support Cofinality} we can find
  an object $B_0 \in \calb_e$ and morphisms $B \xrightarrow{i} B_0 \xrightarrow{r} B$ in
  $\calb$ such that $\supp(i)= \supp(r) = \supp(B)$ and $r \circ i = \id_{B}$ holds. Now
  one easily checks that
  $\Idem(F[S]_{\oplus})$ is an equivalence of additive categories for every $G$-set $S$.
  \end{remark}


  \typeout{---------- Section 6: The example coming from Hecke algebras------------------}

\section{The example coming from Hecke algebras}%
\label{sec:The_example_coming_from_Hecke_algebras}


\subsection{The basic setup for Hecke algebras}\label{subsec:The_basic_setup_for_Hecke_algebras}

We briefly recall the basic setup of~\cite[Section~2.A]{Bartels-Lueck(2022KtheoryHecke)}.

Let $R$ be a (not necessarily commutative) associative unital ring with $\IQ \subseteq R$. 
Let $G$ be a td-group with a normal (not
necessarily open or central) subgroup $N \subseteq G$. Put $Q = G/N$.  Then we obtain an
extension of td-groups $1 \to N \to G \xrightarrow{\pr} Q \to 1$.

Consider a group homomorphism $\rho \colon G \to \aut(R)$, where $\aut(R)$ is the group of
automorphism of the unital ring $R$.  We will assume that the kernel of $\rho$ is open, in
other words, $G$ acts smoothly on $R$.

A \emph{normal character} is a locally constant group homomorphism
\[
  \omega\colon N \to \cent(R)^{\times}
\]
to the multiplicative group of central units of $R$ satisfying
$\omega(gng^{-1})  =  \omega(n)$
for all $n \in N$ and $g \in G$.  We will need the following compatibility condition between the
normal character and the $G$-action $\rho$ on $R$, namely
for $n \in N$, $g \in G$, and $r \in R$ we require $\rho(g)(\omega(n))  =  \omega(n)$
and $\rho(n)(r) = r$.

Let $\mu$ be a \emph{$\IQ$-valued Haar measure on $Q$}, i.e., a Haar measure $\mu$ on $Q$
such that for every  compact open subgroup $K \subseteq Q$ we have $\mu(K) \in \IQ^{>
  0}$. Given any Haar measure $\mu$ on $Q$, we can normalize it to a $\IQ$-valued Haar
measure by choosing a compact open subgroup $L_0 \subseteq Q$ and defining
$\mu' = \frac{1}{\mu(L_0)} \cdot \mu$.


\subsection{The construction of the Hecke algebra}%
\label{subsec:The_construction_of_the_Hecke_algebra}

An element $s$ in the \emph{Hecke algebra} $\calh(G;R,\rho,\omega)$ is
given by a map $s \colon G \to R$ with the following properties

\begin{itemize}

\item The map  $s \colon G \to R$ is locally constant;
  
\item The image of its support $\supp(s) := \{g \in G \mid s(g) \not= 0\}\subseteq G$
  under $\pr \colon G \to Q$ is a compact subset of $Q$;

\item For $n \in N$ and $g \in G$ we have  $s(ng) = \omega(n) \cdot s(g)$ and
  $s(gn)  = s(g) \cdot \omega(n)$.
\end{itemize}

 Let $P_{\rho,\omega}$ the set of compact open subgroups $K \subseteq G$ satisfying
$\rho(k)(r) =  r$ for  $k \in K$, $r \in R$ and $\omega(n) = 1$ for $ n \in N \cap K$.
  We call an element $K \in P_{\rho,\omega}$ \emph{admissible}  for $s \colon G \to R$,
  if  for all $g \in G$ and   $k \in K$ we have $s(kg)   =   s(g)$ and $ s(gk)  = s(g)$.
  Note that the existence of an admissible element $K \in P_{\rho,\omega}$ is equivalent to the condition
that $s$ is locally constant.

For two elements $s,s'$ in $\calh(G;R,\rho,\omega)$, define
$(s+s')(g) = s(g) + s'(g)$ and $(-s)(g) = -s(g)$ for $g \in G$.
In order to define the product, choose $K \in P_{\rho,\omega}$ which  is admissible for $s$ and  admissible for
$s'$, and a transversal $T$ for the projection $p \colon G \to G/N\!K$, where $N\!K$ is
the subgroup of $G$ given by $\{nk \mid n\in N, k \in K\}$.  Define the product
$s \cdot s'$ by
\begin{eqnarray}
  (s \cdot s')(g) 
  & := &
         \mu(\pr(K)) \cdot \sum_{g' \in T} s(gg') \cdot \rho(gg')(s'(g'^{-1})).
         \label{composition_in_Hecke_algebra}
\end{eqnarray}

It is not hard to check that this definition is independent of the
choice of $K$ and $T$.  One may think of this as an integral
$(s \cdot s')(g) = \int_{G} s(gx) \cdot \rho(gx)(s'(x^{-1})) d\mu(x)$,
where $\mu$ is a left invariant Haar measure.  More information and
details can be found
in~\cite[Section~2.B]{Bartels-Lueck(2022KtheoryHecke)}.

If $\rho$ is trivial and $N = \{1\}$ and hence $G=Q$, then we write
\begin{equation}
  \calh(G;R) = \calh(G;R,\rho,\omega).
  \label{def_of_calh(G;R)}
\end{equation}


\subsection{The Hecke category with $Q$-support associated to Hecke algebras}%
\label{subsec:The_Hecke_category_with_Q-support_associated_to_Hecke_algebras}

Next we define  a Hecke category $\calb = \calb(G;R,\rho, \omega)$ 
with $Q$-support.

The set of objects in $\calb$ is the set $P_{\rho,\omega}$ defined in
Subsection~\ref{subsec:The_construction_of_the_Hecke_algebra}.
A morphism $s \colon K \to K'$ is a function $s \colon G \to R$ satisfying
\begin{itemize}
\item the image of $\{g \in  G \mid s(g) \not= 0\}$ under $\pr \colon G \to Q$ is compact;
\item  $s(gk) = s(g)$ for $g \in G$ and $k \in K$;
\item $s(k'g) = s(g)$ for $g \in G$ and $k' \in K'$;
\item $s(ng) = \omega(n) \cdot s(g)$ for $n \in N$ and $g \in G$;
\item $s(gn) = s(g) \cdot \omega(n)$ for $n \in N$ and $g \in G$.
\end{itemize}
Note that $s$ defines an element in $\calh(G;R,\rho,\omega)$. 
The composition in $\calb$ is given by the multiplication in
$\calh(G;R,\rho,\omega)$, see~\eqref{composition_in_Hecke_algebra}.
The identity $\id_K$ of an object  $K$ is defined by 
\[
\id_K(g)  = \begin{cases}  \frac{1}{\mu(\pr(K))} \cdot \omega(n)
      & \text{if}\; g = nk \;\text{for}\; n \in N, k \in K;
      \\
      0 & \text{otherwise.}
    \end{cases}
  \]
The support of a morphism $s$ is defined
by $\supp_{\calb}(s) = \pr(\{g \in G \mid s(g) \not= 0\})$.  In particular the support of an object $K$ is $\pr(K)$.

Next we define a $G$-action
on $\calb$. For an object $K$ and an element $g \in G$ we define $g\cdot K := gKg^{-1}$.
For a morphism $s \colon K \to K'$ define
$g \cdot s \colon gKg^{-1} \to gK'g^{-1}$ by
\[
  (g\cdot s)(g') :=  \frac{\mu(\pr(K))}{\mu(\pr(gKg^{-1}))} \cdot s(g^{-1}g'g) \quad \text{for}\;  g' \in G.
\]
Define for an object $K$ and $g \in G$ an isomorphism
$\Omega_g(K) \colon K \xrightarrow{\cong} gKg^{-1}$ by
  \[\Omega_g(K)(g') = \begin{cases}  \frac{1}{\mu(\pr(gKg^{-1}))}\omega(n)
      & \text{if}\; g' = gnk \;\text{for}\; n \in N, k \in K;
      \\
      0 & \text{otherwise.}
    \end{cases}
  \]
  It inverse is given by $\Omega_{g^{-1}}(gKg^{-1}) \colon gKg^{-1} \to K$.

  We leave the elementary proof  to the reader that $\calb$ satisfies all the axioms appearing in
  Definition~\ref{def:Hecke_categories_with_G-support}  except for the axioms \emph{Translation} and 
  \emph{Support cofinality}.
  \emph{Translation}  follows from the $G$-action and the isomorphisms
  $\Omega_{g^{-1}}(gKg^{-1})$ constructed above, since $\pr \colon G \to Q$ is surjective.
  For \emph{Support cofinality},
  consider an object $K$ of $\calb$ and a compact open subgroup $L \subseteq \supp_{\calb}(K) = \pr(K)$.
  We define the object $K|_L$ in $\calb$ to be $K \cap \pr^{-1}(L)$. We have to define morphisms
  $i \colon K \to K|_L$ and $r \colon K|_L \to K$ in $\calb$ such that $r \circ i = \id_K$ holds and
  both $\supp_{\calb}(i)$ and $\supp_{\calb}(r)$ agree with $\supp_{\calb}(K) := \pr(K)$. This is done by putting
  for $g' \in G$
  \begin{equation}
    i(g') = r(g')
    = 
    \begin{cases}
      \frac{1}{\mu(\pr(K))} \cdot 1_R & \text{if} \; g' = nk \;\text{for}\; n \in N, k \in K;
      \\
      0 & \text{otherwise,}
    \end{cases}
    \label{i_and_r_colon_K_to_K|_L}
  \end{equation}

  Let $\calh(G;R,\rho,\omega)[\IZ^d]$ be the group ring of $\IZ^d$ with coefficients in
  $\calh(G;R,\rho,\omega)$.  Denote by $\underline{\calh(G;R,\rho,\omega)[\IZ^d]}$ the
  $\IZ$-category which has precisely one object whose endomorphisms are given by elements in
  $\calh(G;R,\rho,\omega)[\IZ^d]$. The $\IZ$-structure comes from the additive structure
  of $\calh(G;R,\rho,\omega)[\IZ^d]$, while composition comes from the multiplicative
  structure of $\calh(G;R,\rho,\omega)[\IZ^d]$. Since $\calh(G;R,\rho,\omega)$ is a ring
  without unit in general, $\underline{\calh(G;R,\rho,\omega)[\IZ^d]}$ is in general
  non-unital in the sense that there may be no identity morphisms for objects.

  For a (not necessarily unital) $\IZ$-category $\calb$,  let $\calb_{\oplus}$ 
  be the $\IZ$-category
  whose objects $\underline{B}$ are $n$-tupels $(B_1, \ldots , B_n)$ consisting of
  objects $B_1$, \ldots, $B_n$ in $\calb $ for $n \ge 1$ or the object $0$, which will
  be an initial and terminal object in $\calb$.  A morphism
  $\underline{\varphi} \colon \underline{B} = (B_1, \ldots ,B_n) \to \underline{B'} =
  (B'_1, \ldots ,B'_{n'})$ is a collection of morphisms $\varphi \colon B_i \to B_{i'}$ in
  $\calb$ for $i \in \{1, \ldots, n\}$ and $i' \in \{1, \ldots, n'\}$. Composition is given
  by matrix multiplication. The direct sum is defined by concatenation. For any objects
  $\underline{B}$ and $\underline{B'}$ and $\underline{C}$ there is a natural isomorphism of
  $\IZ$-modules
  \[
    \mor_{\calb_{\oplus}}(\underline{B}, \underline{C}) \oplus
    \mor_{\calb_{\oplus}}(\underline{B}, \underline{C}) \xrightarrow{\cong}
    \mor_{\calb_{\oplus}}(\underline{B} \oplus \underline{B'}, \underline{C}).
  \]
  If $\calb$ is unital, this agrees with the earlier definition
  introduced before Definition~\ref{def:Smooth_K-theory_spectrum}.

  Note that in the idempotent  completion $\Idem(\calb_{\oplus})$ every object has an identity
  and the direct sum in $\calb_{\oplus}$ induces the structure of an additive category on
  $\Idem(\calb_{\oplus})$.

  \begin{remark}\label{rem:K_n_of_Hecke_algebras}
   The algebraic $K$-groups of the (non-unital) ring $\calh(G;R,\rho,\omega)[\IZ^d]$ are defined by
   \begin{equation}
     K_n(\calh(G;R,\rho,\omega)[\IZ^d]) := K_n\bigl(\Idem(\underline{\calh(G;R,\rho,\omega)[\IZ^d]}_{\oplus})\bigr).
     \label{K_n(calh(G;R,rho,omega)[IZ_upper_d])}
   \end{equation}
   This agrees with the usual definition $K_n(R) :=\cok(K_n(\IZ) \to K_n(R_+))$ for a
   non-unital ring $R$, where $R_+$ is the unitalization of $R$,  if $R$ has an approximate unit,
   which   is the case for $\calh(G;R,\rho,\omega)[\IZ^d]$. 
 \end{remark}
 
\begin{lemma}\label{lem_Idem(calh)_and_calb_calh}
  Consider any natural number $d$.
  Then there exists an equivalence of additive categories
    \[
    \Idem(F) \colon \Idem(\calb(G;R,\rho,\omega)_{\oplus}[\IZ^d])
    \xrightarrow{\simeq} \Idem(\underline{\calh(G;R,\rho,\omega)[\IZ^d]}_{\oplus}).
    \]
  \end{lemma}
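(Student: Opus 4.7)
\emph{Proof plan.}
The overall plan is to define $F$ as the (non-unit preserving) $\IZ$-functor
\[
F \colon \calb(G;R,\rho,\omega)_\oplus[\IZ^d] \to \underline{\calh(G;R,\rho,\omega)[\IZ^d]}_\oplus
\]
that sends every object $K$ to the unique object $\ast$ and each morphism $\sum_\nu s_\nu t^\nu \colon K \to K'$ (with each $s_\nu$ a function $G \to R$ satisfying the bi-invariance and character compatibilities) to the same Laurent series $\sum_\nu s_\nu t^\nu$ now regarded as an element of $\calh(G;R,\rho,\omega)[\IZ^d]$, extended entrywise on matrices. Compatibility with composition is exactly the Hecke-product formula~\eqref{composition_in_Hecke_algebra}. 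Crucially, $F$ carries $\id_K$ to the idempotent $e_K := F(\id_K) \in \calh(G;R,\rho,\omega)$; so while the target category has no identity morphisms, taking $\Idem$ absorbs this issue and one gets the induced functor $\Idem(F)$ on idempotent completions, which is the candidate equivalence.

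Next I would establish the identification
\[
F \colon \mor_{\calb[\IZ^d]}(K, K') \xrightarrow{\ \cong\ } e_{K'} \cdot \calh(G;R,\rho,\omega)[\IZ^d] \cdot e_K,
\]
which is immediate from unwinding the bi-invariance conditions against the definition of $e_K$, and which entrywise yields the analogous identification for $\calb_\oplus[\IZ^d]$. From this, faithfulness of $\Idem(F)$ is immediate. Fullness then follows: a morphism $((\ast,\dots,\ast),P) \to ((\ast,\dots,\ast),P')$ in $\Idem(\underline{\calh[\IZ^d]}_\oplus)$ is a matrix $\Phi$ over $\calh[\IZ^d]$ satisfying $P' \Phi = \Phi P = \Phi$; given objects $X = ((K_1,\dots,K_m),P)$ and $Y = ((K'_1,\dots,K'_{m'}),P')$ of $\Idem(\calb_\oplus[\IZ^d])$ lifting them, the absorption conditions force each $\Phi_{ij} \in e_{K'_i} \calh[\IZ^d] e_{K_j}$, so $\Phi$ lies in the image of $F$ and hence lifts to a morphism $X \to Y$ in $\Idem(\calb_\oplus[\IZ^d])$ (faithfulness of $F$ guarantees that the idempotent absorption is preserved under the lift).

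The remaining and main step is essential surjectivity: given an idempotent matrix $P \in M_m(\calh(G;R,\rho,\omega)[\IZ^d])$, one must exhibit a compact open subgroup $K \in P_{\rho,\omega}$ and an idempotent endomorphism $\widetilde{P}$ of $(K,\dots,K) \in \calb_\oplus[\IZ^d]$ with $F(\widetilde{P}) = P$. The key input is that $\{e_K\}_{K \in P_{\rho,\omega}}$ is an approximate identity in $\calh(G;R,\rho,\omega)$: a direct computation from~\eqref{composition_in_Hecke_algebra}, unraveling the normalization $\id_K(nk) = \mu(\pr(K))^{-1} \omega(n)$ and using the defining properties of $P_{\rho,\omega}$ (namely $\rho(k)|_R = \id_R$ for $k \in K$ and $\omega|_{N \cap K} = 1$) together with admissibility of $K$ for $s$, yields $e_K \cdot s = s \cdot e_K = s$. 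Since $P$ has only finitely many nonzero Hecke-algebra coefficients, each of which is locally constant with compact support modulo $N$ and hence admissible under some element of $P_{\rho,\omega}$, and since $P_{\rho,\omega}$ is closed under finite intersection, a single common $K$ exists with $P \in M_m(e_K \calh[\IZ^d] e_K)$. Under the identification above this $P$ is exactly a (necessarily idempotent) endomorphism of $(K,\dots,K) \in \calb_\oplus[\IZ^d]$, giving the required lift. The main obstacle is precisely this approximate-identity calculation --- once it is in hand, the three components of the equivalence all follow cleanly from the identification $\mor_{\calb} \cong e \calh e$.
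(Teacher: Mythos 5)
Your approach matches the paper's up to minor repackaging, so the substance is correct and complete. The paper's $F$ has target $\Idem(\underline{\calh(G;R,\rho,\omega)[\IZ^d]}_{\oplus})$ from the start, sending $\underline K = (K_1,\dots,K_n)$ to the object $\id_{K_1}\oplus\cdots\oplus\id_{K_n}$; it then invokes the canonical equivalence $\Idem(\cala) \simeq \Idem(\Idem(\cala))$ (citing \cite[Lemma~5.6]{Bartels-Lueck(2022KtheoryHecke)}) to pass from $\Idem(F)$ with target $\Idem(\Idem(\cdots))$ to the stated target. You instead let $F$ land in the non-unital category $\underline{\calh[\IZ^d]}_{\oplus}$ and apply $\Idem$ once; this is the same functor up to that shuffle. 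The other genuine difference: the paper establishes that the image of $F$ is cofinal — exhibiting each idempotent $\underline p$ as a retract of some $\id_K^n$ via $\underline{i},\underline{r}$ both given by $\underline p$ — and concludes the equivalence from fullness, faithfulness and cofinality; you prove direct essential surjectivity of $\Idem(F)$, namely that $(\ast^m,P)$ literally lies in the image once you choose a common $K\in P_{\rho,\omega}$ with $e_K P_{ij} e_K = P_{ij}$ for all entries. Both routes rely on the same mechanism (finite intersection in $P_{\rho,\omega}$ gives a common admissible $K$ and $e_K$ acts as a unit on everything in sight), and both the paper's and your argument implicitly rest on the identification $\mor_{\calb[\IZ^d]}(K,K') \cong e_{K'}\calh[\IZ^d]e_K$, i.e.\ the approximate-identity computation. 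Your version is marginally more direct but buys nothing materially different.
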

  \begin{proof} We begin with defining a functor
  \[
      F \colon \calb(G;R,\rho,\omega)_{\oplus}[\IZ^d]
      \to \Idem(\underline{\calh(G;R,\rho,\omega)[\IZ^d]}_{\oplus}),
  \]
  It  assigns to  an object $\underline{K} = (K_1, \ldots, K_n)$ in
  $\calb(G;R,\rho,\omega)_{\oplus}[\IZ^d]$ the object in
  $\Idem(\underline{\calh(G;R,\rho,\omega)[\IZ^d]}_{\oplus})$ given by
  $\id_{K_1} \oplus \cdots \oplus \id_{K_n}$.  Consider a morphism
  $\underline{s} = (s_{i,i'}) \colon (K_1, \ldots, K_n) \to (K'_1, \ldots, K'_{n'})$ in
  $\calb(G;R,\rho,\omega)_{\oplus}$. It is sent to the morphism
  $\underline{s'} = (s'_{i,i'})\colon \id_{K_1} \oplus \cdots \oplus \id_{K_n}$ to
  $\id_{K'_1} \oplus \cdots \oplus \id_{K'_{n'}}$ given by the same collection
  $(s_{i,i'})$ having in mind that each $s_{i,i'}$ is an element in
  $\calh(G;R,\rho,\omega)$ satisfying
  $\id_{K_{i'}} \circ s_{i,i'} \circ \id_{K_i} = s_{i,i'}$.

    Next we show that 
    \[
    \Idem(F) \colon \Idem(\calb(G;R,\rho,\omega)_{\oplus}[\IZ^d])
    \xrightarrow{\simeq} \Idem\bigl(\Idem(\underline{\calh(G;R,\rho,\omega[\IZ^d])}_{\oplus})\bigr)
  \]
  is an equivalence of additive categories. One easily checks that F and hence $\Idem(F)$
  is faithful and full.  Hence it suffices to show that the image of $F$ is cofinal in
  $\Idem(\underline{\calh(G;R,\rho,\omega[\IZ^d])}_{\oplus})$.  Consider an object
  $\underline{p} = (p_{i,i'}) \colon \ast^n \to \ast^n$ in
  $\Idem(\underline{\calh(G;R,\rho,\omega[\IZ^d])}_{\oplus})$, where $\ast^n$ is the
  $n$-tuple $(\ast, \ldots, \ast)$. For each $p_{i,i'}$ there exists elements $K_i$ and
  $K_i'$ in $P_{\rho,\omega}$ such that
  $\id_{K_{i'}} \circ p_{i,i'} \circ \id_{K_i} = p_{i,i'}$ holds. Put
    \[
      K = \bigcap_{i = 1}^n K_i \cap \bigcap_{i' = 1}^n K'_{i'}.
    \]
    Then $\id_{K} \circ p_{i,i'} \circ \id_{K} = p_{i,i'}$ holds for every $i$ and $i'$.
    Consider the object $\id_K^n \colon \ast^n \to \ast^n$ in
    $\Idem(\underline{\calh(G;R,\rho,\omega)}_{\oplus})$ which is given by the $n$-fold
    direct sums of copies of $\id_K \colon \ast \to \ast$.  Let
    $\underline{i} \colon \underline{p} \to \id_K^n$ and
    $\underline{r}\colon \id_K^n \to \underline{p}$ be the morphisms in
    $\Idem(\underline{\calh(G;R,\rho,\omega)}_{\oplus})$ that are in both cases given by
    the morphism $\underline{p}$ in $\underline{\calh(G;R,\rho,\omega)}_{\oplus}$. One
    easily checks $\underline{r} \circ \underline{i} = \id_{\underline{p}}$.  Since
    $\id_K^n$ is in the image of $F$, the image of $F$ is cofinal. Hence $\Idem(F)$ is an
    equivalence of additive categories.
    
    We obtain an equivalences of additive categories, 
    \[
      \Idem(\underline{\calh(G;R,\rho,\omega)[\IZ^d]}_{\oplus})
     \xrightarrow{\simeq} 
     \Idem\bigl(\Idem(\underline{\calh(G;R,\rho,\omega[\IZ^d])}_{\oplus})\bigr)
     \]
     from~\cite[Lemma~5.6]{Bartels-Lueck(2022KtheoryHecke)}.
     Since there is an obvious isomorphism 
      \[
      \underline{\calh(G;R,\rho,\omega)[\IZ^d]}_{\oplus} 
      \xrightarrow{\cong} 
       \underline{\calh(G;R,\rho,\omega)}_{\oplus}[\IZ^d],
    \]
    Lemma~\ref{lem_Idem(calh)_and_calb_calh} follows.
  \end{proof}
  
  \begin{remark}\label{lem:calb|_U}
    Let $U \subseteq Q$ be an  open subgroup of $Q$. Then we get the equality
    \[\calb(G;R,\rho,\omega)|_U = \calb(\pr^{-1}(U);R,\rho|_{\pr^{-1}(U)}, \omega),
    \]
    where the source has been defined in Notation~\ref{not:calb|_H}.
  \end{remark}


\subsection{Consequences of the $\COP$-Farrell-Jones Conjecture for Hecke algebras}%
\label{subsec:Consequences_of_the_COP-Farrell-Jones_Conjecture_for_Hecke_algebras}

Let $\bfK_{\calb(G;R,\rho,\omega)} \colon \OrsmG{G} \to \Spectra$ be the covariant
$\OrsmG{G}$-spectrum of Definition~\ref{def:Smooth_K-theory_spectrum} associated to the
Hecke category with $Q$-support $\calb(G;R,\rho, \omega)$, see
Subsection~\ref{subsec:The_Hecke_category_with_Q-support_associated_to_Hecke_algebras}.
We conclude from~\eqref{K_n(calb|_H)_cong_pi_n(bfK_calb)(G/H))} and
Remark~\ref{lem:calb|_U} that for every open subgroup $U \subseteq Q$ and $n \in \IZ$
there is an isomorphism
\begin{equation}
  \pi_n\bigl(\bfK_{\calb(G;R,\rho,\omega)}(G/U)\bigr)
  = K_n\bigl(\calh(\pr^{-1}(U);R,\rho|_{\pr^{-1}(U)},\omega)\bigr).
\label{computing_pi_nl(bfK_(R,rho,omega)(G/U))}
\end{equation}

The \emph{smooth subgroup category} $\SubsmG{G}$ has as objects the open subgroups $H$ of
$G$. For subgroups $H$ and $K$ of $G$, denote by $\conhom_G(H,K)$ the set of group
homomorphisms $f\colon H \to K$, for which there exists an element $g \in G$ with
$gHg^{-1} \subset K$ such that $f$ is given by conjugation with $g$, i.e.
$f = c(g): H \to K, \hspace{3mm} h \mapsto ghg^{-1}$.  Note that $c(g) = c(g')$ holds for
two elements $g,g' \in G$ with $gHg^{-1} \subset K$ and $g'Hg'^{-1} \subset K$, if and
only if $g^{-1}g'$ lies in the centralizer
$C_GH = \{g \in G \mid gh=hg \mbox{ for all } h \in H\}$ of $H$ in $G$. The group of inner
automorphisms $\operatorname{Inn}(K)$ of $K$ acts on $\conhom_G(H,K)$ from the left by
composition. Define the set of morphisms
\[
\mor_{\Sub_\COP(G)}(H,K) := \operatorname{Inn}(K) \backslash \conhom_G(H,K).
\]
There is an obvious bijection
\begin{multline}
  K\backslash \{g \in G \mid gHg^{-1} \subseteq K\}/C_GH \xrightarrow{\cong}
  \operatorname{Inn}(K) \backslash \conhom_G(H,K), \\
  \quad KgC_GH \mapsto [c(g)],
  \label{identification_of_K_backslash_(g_in_G_mid_gHg_upper_(-1)_subseteq_K)/C_GH}
\end{multline}
where $[c(g)] \in \operatorname{Inn}(K) \backslash \conhom_G(H,K)$
  is the class represented by the element
  $c(g) \colon H \to K, \; h \mapsto ghg^{-1}$ in $\conhom_G(H,K)$ and
  $K$ acts from the left and $C_GH$ from the right on $\{g \in G \mid gHg^{-1} \subseteq K\}$
  by the  multiplication in $G$.

  \begin{lemma}\label{lem:Assumption_(ReG)_holds_for_calb(G,R,rho,omega)}
  The (Hecke) category with $Q$-support $\calb(G;R,\rho, \omega)$
  satisfies condition (Reg), see Definition~\ref{def:(Reg)_for_calb}, if $R$ is uniformly regular.
\end{lemma}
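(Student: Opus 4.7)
The plan is to reduce the required $l(d)$-uniform regular coherence of $\calb(G;R,\rho,\omega)[Q/K]_{\oplus}[\IZ^d]$ step by step, via the chain of equivalences built up in Section~\ref{sec:The_example_coming_from_Hecke_algebras}, until one is left with a regularity statement for a single Hecke algebra that can then be quoted from prior work.

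Fix $d \ge 0$ and a compact open subgroup $K \subseteq Q$; we must exhibit a natural number $l(d)$ independent of $K$ such that $\calb(G;R,\rho,\omega)[Q/K]_{\oplus}[\IZ^d]$ is $l(d)$-uniformly regular coherent. First I would apply Lemma~\ref{lem:comparing_calb|_H_and_calb(G(H)}, giving an equivalence of $\IZ$-categories $\calb|_K \simeq \calb[Q/K]$ that passes to the additive completions and then to $(-)[\IZ^d]$, combined with the identification $\calb(G;R,\rho,\omega)|_K = \calb(\pr^{-1}(K); R, \rho|_{\pr^{-1}(K)}, \omega)$ of Remark~\ref{lem:calb|_U}. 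Lemma~\ref{lem_Idem(calh)_and_calb_calh}, applied to the td-group $\pr^{-1}(K)$ in place of $G$, then supplies an equivalence of additive categories
\[
  \Idem\bigl(\calb(\pr^{-1}(K); R, \rho, \omega)_{\oplus}[\IZ^d]\bigr)
  \simeq \Idem\bigl(\underline{\calh(\pr^{-1}(K); R, \rho, \omega)[\IZ^d]}_{\oplus}\bigr).
\]
Since $l(d)$-uniform regular coherence is invariant under idempotent completion by~\cite[Lemma~6.4~(vi)]{Bartels-Lueck(2020additive)}, it is enough to verify the property for the right-hand side.

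By Remark~\ref{rem:K_n_of_Hecke_algebras} the right-hand side is the category that, by definition, computes the algebraic $K$-theory of the (non-unital) ring $\calh(\pr^{-1}(K); R, \rho, \omega)[\IZ^d]$, and this ring is canonically isomorphic to the Hecke algebra $\calh(\pr^{-1}(K) \times \IZ^d; R, \rho \times \mathrm{triv}, \omega \times \mathrm{triv})$ of the product td-group. The final step is then to invoke a general regularity theorem for Hecke algebras of td-groups with coefficients in a uniformly regular ring $R$ with $\IQ \subseteq R$, as expected to be available from~\cite{Bartels-Lueck(2022KtheoryHecke)}: namely that $\Idem(\underline{\calh(H;R,\rho,\omega)}_{\oplus})$ is $l$-uniformly regular coherent with $l$ depending only on the regularity length of $R$. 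Applying this with $R$ replaced by $R[\IZ^d]$, which is again uniformly regular (of length $l(R)+d$ by a Hilbert-syzygy-type argument), yields the desired $l(d)$ uniformly in $K$. The main obstacle is precisely this last input: a regularity bound that is uniform across the family $\{\pr^{-1}(K) \mid K \in \COP\}$ of open subgroups; the hypothesis $\IQ \subseteq R$ is essential because it produces, via averaging over compact open subgroups, the idempotents needed to cut $\calh(H;R,\rho,\omega)$ into Morita-like pieces in which regularity of $R$ can be propagated.
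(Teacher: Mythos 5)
Your proposal is correct and follows essentially the same route as the paper's proof, which is simply the citation of~\cite[Theorem~7.2]{Bartels-Lueck(2022KtheoryHecke)} together with Lemma~\ref{lem_Idem(calh)_and_calb_calh}. You have usefully spelled out the intermediate steps that the paper leaves implicit — namely using Lemma~\ref{lem:comparing_calb|_H_and_calb(G(H)} and Remark~\ref{lem:calb|_U} to rewrite $\calb(G;R,\rho,\omega)[Q/K]$ as $\calb(\pr^{-1}(K);R,\rho|_{\pr^{-1}(K)},\omega)$ before invoking Lemma~\ref{lem_Idem(calh)_and_calb_calh} for the open subgroup $\pr^{-1}(K)$, and noting the invariance of $l$-uniform regular coherence under idempotent completion — and you correctly anticipate that the remaining input is the uniform regularity statement for Hecke algebra categories established in~\cite{Bartels-Lueck(2022KtheoryHecke)}.
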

\begin{proof}
  This follows
  from~\cite[Theorem~7.2]{Bartels-Lueck(2022KtheoryHecke)}
  and Lemma~\ref{lem_Idem(calh)_and_calb_calh}.
\end{proof}

\begin{theorem}\label{the:K_0_and_negative_K-theory_for_Hecke_algebras}
  Suppose that the td-group $Q$ satisfies the $\COP$-Farrell-Jones
  Conjecture~\ref{con:COP-Farrell-Jones_Conjecture}, e.g., $Q$ is modulo a normal compact
  subgroup a subgroup of some reductive $p$-adic group. Let $R$ be a uniformly
  regular ring with $\IQ \subseteq R$. Suppose that $N \subseteq G$ is locally central, i.e.,
  its centralizer $C_GN$ in $G$ is an open subgroup of $G$. Then:

  \begin{enumerate}

  \item\label{the:K_0_and_negative_K-theory_for_Hecke_algebras:assembly}
    The assembly  map  induced by the projection
   $\EGF{Q}{\COP} \to Q/Q$
  \[
H_n^G(\EGF{G}{\COP};\bfK_{\calb(G;R,\rho,\omega)}) \to H_n^G(G/G;\bfK_{\calb(G;R,\rho,\omega)}) = K_n(\calh(G;R,\rho,\omega))
\]
  is an isomorphism for $n \in \IZ$;

  \item\label{the:K_0_and_negative_K-theory_for_Hecke_algebras:K_0}

  The canonical map induced by the various inclusions $U \subseteq Q$
  \[
   \colimunder_{U \in \Sub_\COP(G)} \Kgroup_0 (\calh(U;R;\rho|_U,\omega)) \to \Kgroup_0 (\calh(G;R,\rho,\omega))
\]
can be identified with the assembly map~of assertion~\ref{the:K_0_and_negative_K-theory_for_Hecke_algebras:assembly}
in degree $n = 0$ and hence is bijective;

  \item\label{the:K_0_and_negative_K-theory_for_Hecke_algebras:negative}
    We have $K_{n}(\calh(G;R,\rho,\omega)) = 0$  for $n \le -1$.
  \end{enumerate}
\end{theorem}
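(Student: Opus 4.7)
For assertion~\ref{the:K_0_and_negative_K-theory_for_Hecke_algebras:assembly} the plan is to apply Conjecture~\ref{con:COP-Farrell-Jones_Conjecture} for the td-group $Q$ to the Hecke category with $Q$-support $\calb := \calb(G;R,\rho,\omega)$ constructed in Subsection~\ref{subsec:The_Hecke_category_with_Q-support_associated_to_Hecke_algebras}; the hypothesis (Reg) is supplied by Lemma~\ref{lem:Assumption_(ReG)_holds_for_calb(G,R,rho,omega)} since $R$ is uniformly regular. The right-hand side $H_n^Q(Q/Q;\bfKinfty_\calb) = \pi_n(\bfKinfty(\calb_\oplus))$ is then identified with $K_n(\calh(G;R,\rho,\omega))$ by applying Lemma~\ref{lem_Idem(calh)_and_calb_calh} with $d=0$, together with the definition~\eqref{K_n(calh(G;R,rho,omega)[IZ_upper_d])} of the $K$-theory of the non-unital Hecke algebra.

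The technical input for the remaining two assertions is the vanishing of the negative $K$-groups of $\calh(\pr^{-1}(U);R,\rho|_{\pr^{-1}(U)},\omega)$ for every compact open subgroup $U \subseteq Q$. By~\eqref{computing_pi_nl(bfK_(R,rho,omega)(G/U))} this coincides with $\pi_n(\bfKinfty_\calb(Q/U)) = K_n(\calb[Q/U]_\oplus)$. Condition (Reg) guarantees that $\calb[Q/U]_\oplus[\IZ^d]$ is uniformly regular coherent for every $d \ge 0$, so iterating the Bass--Heller--Swan decomposition yields $K_n(\calb[Q/U]_\oplus) = 0$ for $n \le -1$.

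For assertion~\ref{the:K_0_and_negative_K-theory_for_Hecke_algebras:negative}, feed this into the equivariant Atiyah--Hirzebruch spectral sequence
\[
E^2_{p,q} = H_p^Q(\EGF{Q}{\COP};\pi_q \bfKinfty_\calb) \Longrightarrow H_{p+q}^Q(\EGF{Q}{\COP};\bfKinfty_\calb)
\]
for the smooth $Q$-homology theory of Subsection~\ref{subsec:Smooth_G-homology_theories}. Bredon homology vanishes for $p<0$ and the coefficient system vanishes for $q\le -1$ by the previous paragraph, so the abutment vanishes in negative total degrees; combined with assertion~\ref{the:K_0_and_negative_K-theory_for_Hecke_algebras:assembly} this yields $K_n(\calh(G;R,\rho,\omega))=0$ for $n\le -1$. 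For assertion~\ref{the:K_0_and_negative_K-theory_for_Hecke_algebras:K_0} the same vanishing forces the spectral sequence to collapse in total degree zero onto $E^2_{0,0} = H_0^Q(\EGF{Q}{\COP};\pi_0 \bfKinfty_\calb)$. The standard identification $H_0^Q(\EGF{Q}{\COP};M) = \colim_{\Or_\COP(Q)} M$ for any coefficient system $M$, combined with the fact that inner automorphisms act trivially on $K_0$ so that $\pi_0\bfKinfty_\calb$ descends through $\Sub_\COP(Q)$, then rewrites $E^2_{0,0}$ as $\colim_{\Sub_\COP(Q)} K_0(\calh(\pr^{-1}(U);R,\rho|_{\pr^{-1}(U)},\omega))$, which is the colimit appearing in the statement. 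Naturality shows this colimit coincides with the assembly map in degree zero, hence is an isomorphism by~\ref{the:K_0_and_negative_K-theory_for_Hecke_algebras:assembly}.

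The main obstacle is to ensure that the equivariant Atiyah--Hirzebruch spectral sequence and the $H_0$-as-colimit identification really are available for the smooth $Q$-homology theory built from an $\OrsmG{Q}$-spectrum. This should not require new input: because $\OrsmG{Q}$ is treated as a plain small category, the Davis--L\"uck framework~\cite{Davis-Lueck(1998)} applies verbatim, as already recorded in Subsection~\ref{subsec:Smooth_G-homology_theories}.
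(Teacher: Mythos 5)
For assertion~\ref{the:K_0_and_negative_K-theory_for_Hecke_algebras:assembly} your argument agrees with the paper's: apply Conjecture~\ref{con:COP-Farrell-Jones_Conjecture} to $\calb = \calb(G;R,\rho,\omega)$, invoke Lemma~\ref{lem:Assumption_(ReG)_holds_for_calb(G,R,rho,omega)} for (Reg), and identify the target with $K_n(\calh(G;R,\rho,\omega))$ via Lemma~\ref{lem_Idem(calh)_and_calb_calh}. (You omit the references to Theorems~\ref{the:FJC_for_reductive_p-adic_groups} and~\ref{the:inheritance}, but those are only needed to justify the parenthetical ``e.g.''.)

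For assertions~\ref{the:K_0_and_negative_K-theory_for_Hecke_algebras:K_0} and~\ref{the:K_0_and_negative_K-theory_for_Hecke_algebras:negative} the paper does not give a proof at all: it simply cites \cite[Theorem~1.1~(iii) and~(iv)]{Bartels-Lueck(2023recipes)}. So your equivariant Atiyah--Hirzebruch spectral sequence argument is a genuinely different (more self-contained) route. The argument for~\ref{the:K_0_and_negative_K-theory_for_Hecke_algebras:negative} is sound: (Reg) together with the iterated Bass--Heller--Swan decomposition gives $\pi_q\bfKinfty_{\calb}(Q/K) = 0$ for $q \le -1$ and $K$ compact open, the $E^2$-page is concentrated in $p \ge 0$, $q \ge 0$, and the abutment vanishes in negative total degree.

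There is, however, a gap in your argument for assertion~\ref{the:K_0_and_negative_K-theory_for_Hecke_algebras:K_0}, and it is precisely at the point where you replace $\colim_{\Or_{\COP}(Q)}$ by $\colim_{\Sub_{\COP}(Q)}$. The fibres of the canonical map $\mor_{\Or_{\COP}(Q)}(Q/U, Q/K) \to \mor_{\Sub_{\COP}(Q)}(U,K)$ are orbits of the centralizer $C_Q U$, as recorded in~\eqref{identification_of_K_backslash_(g_in_G_mid_gHg_upper_(-1)_subseteq_K)/C_GH}. So for $\pi_0\bfKinfty_{\calb}$ to descend you must show that the automorphisms of $K_0\bigl(\calh(\pr^{-1}(U);R,\rho|_{\pr^{-1}(U)},\omega)\bigr)$ induced by elements of $C_Q U$ are trivial. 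These are \emph{not} a priori inner automorphisms of the Hecke algebra: an element $q \in C_Q U$ lifts to some $g \in G$ which conjugates $\pr^{-1}(U)$ but need not lie in $\pr^{-1}(U)$, nor need two lifts of the same $q$ differ by an element acting trivially, unless one controls the conjugation action of $N$. This is exactly where the hypothesis that $N$ is locally central (i.e.\ $C_G N$ open) enters, and you never invoke it. The phrase ``inner automorphisms act trivially on $K_0$'' is a correct slogan but does not by itself cover the centralizer action that actually needs to be killed, and even once descent is established one must still verify that the projection $\Or_{\COP}(Q) \to \Sub_{\COP}(Q)$ is a colimit-preserving (final) functor. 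These are the precise points handled in \cite[Theorem~1.1~(iii)]{Bartels-Lueck(2023recipes)}, to which the paper delegates the whole assertion.
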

\begin{proof}\ref{the:K_0_and_negative_K-theory_for_Hecke_algebras:assembly}
  This follows from Theorem~\ref{the:FJC_for_reductive_p-adic_groups},
  Theorem~\ref{the:inheritance}, and Lemma~\ref{lem:Assumption_(ReG)_holds_for_calb(G,R,rho,omega)}.
  \\[1mm]~\ref{the:K_0_and_negative_K-theory_for_Hecke_algebras:K_0}
  See~\cite[Theorem~1.1~(iii)]{Bartels-Lueck(2023recipes)}.
  \\[1mm]~\ref{the:K_0_and_negative_K-theory_for_Hecke_algebras:negative}
  See~\cite[Theorem~1.1~(iv)]{Bartels-Lueck(2023recipes)}.
  \end{proof}


  \typeout{------ Section 7: Some input for the proof of the Farrell-Jones Conjecture  ----------}

\section{Some input for the proof of the Farrell-Jones Conjecture}%
\label{sec:Some_input_for_the_proof_of_the_Farrell-Jones_Conjecture}

In this section we provided some technical input for the proof of the $\COP$-Farrell-Jones
Conjecture~\ref{con:COP-Farrell-Jones_Conjecture} for reductive $p$-adic groups, which we
will present in~\cite{Bartels-Lueck(2023K-theory_red_p-adic_groups)}.


\subsection{The category $\cals^G(\Omega)$}\label{subsec:S_upper_G}

Throughout this section we fix a $G$-set $\Omega$.  

\begin{definition}\label{def:S_upper_G}
  We define the additive category $\cals^G(\Omega)$ as follows.  Objects are pairs $\bfV = (\Sigma,c)$
  where $\Sigma$ is a smooth $G$-set and $c \colon \Sigma \to \Omega$ is a $G$-map.  A morphism
  $\rho \colon \bfV = (\Sigma,c) \to \bfV' = (\Sigma',c')$ is an
  $\Sigma \times \Sigma'$-matrix
  $(\rho_\sigma^{\sigma'})_{\sigma \in \Sigma, \sigma' \in \Sigma'}$ over $\IZ$ satisfying
  the following two conditions
  \begin{enumerate}[label=(\thetheorem\alph*),leftmargin=*]
  \item\label{def:S_upper_G:col-finite} for all $\sigma \in \Sigma$ the set
    $\{ \sigma' \in \Sigma' \mid \rho_\sigma^{\sigma'} \neq 0 \}$ is finite;
  \item\label{def:S_upper_G:G} for all $g \in G$, $\sigma \in \Sigma$, $\sigma' \in \Sigma'$ we have
    $\rho_{g\sigma}^{g\sigma'} = \rho_{\sigma}^{\sigma'}$.
  \end{enumerate}
  The \emph{support} of $\rho$ is
  \begin{equation*}
    \suppX (\rho) := \Big\{ \twovec{c'(\sigma')}{c(\sigma)} \,\Big|\, \rho_{\sigma}^{\sigma'}
    \neq 0 \Big\} \subseteq \Omega \times \Omega.
  \end{equation*}
  Composition is matrix multiplication
  \begin{equation*}
    (\rho' \circ \rho)_\sigma^{\sigma''} := \sum_{\sigma'} {\rho'}_{\sigma'}^{\sigma''} \circ \rho_\sigma^{\sigma'}.
  \end{equation*}
  The identity of $\bfV = (\Sigma,c)$ is given by the morphism
  $\rho$ with $\rho_\sigma^{\sigma'} = 1$ for $\sigma = \sigma'$ and
  $\rho_\sigma^{\sigma'} = 0$ for $\sigma \not=  \sigma'$.
\end{definition}


\subsection{The category $\calb_G(\Omega)$}

\begin{definition}\label{def:calb(X)}
  Let  $\calb$ be a category with $G$-support.  We define the additive
  category $\calb_G(\Omega)$ as follows.  Objects are triples $\bfB = (S,\pi,B)$, where
  \begin{enumerate}[label=(\thetheorem\alph*),leftmargin=*]
  \item $S$ is a set,
  \item $\pi \colon S \to \Omega$ is a map,
  \item $B \colon S \to \ob(\calb)$ is a map.
  \end{enumerate}
  Morphisms $\bfB = (S,\pi,B) \to \bfB' = (S',\pi',B')$ in $\calb_G(\Omega)$ are matrices
  $\varphi = (\varphi_s^{s'} \colon B(s) \to B'(s'))_{s \in S, s' \in S'}$ of morphisms in
  $\calb$.  Morphisms are required to be column finite: for each $s \in S$ there are only
  finitely many $s' \in S'$ with $\varphi_s^{s'} \neq 0$.  Composition is matrix
  multiplication (using the composition in $\calb$) 
  \begin{equation*}
    (\varphi' \circ \varphi)_s^{s''} := \sum_{s'} {\varphi'}_{s'}^{s''} \circ \varphi_s^{s'}.
  \end{equation*}
  The identity of an object $\bfB = (S,\pi,B)$ is given by the morphisms $\varphi$
  with $\varphi_{\sigma}^{\sigma'} = \id_{B(\sigma)}$ for $\sigma' = \sigma$ and
  $\varphi_{\sigma}^{\sigma'} = 0$ for $\sigma' \not= \sigma$. The direct sum in
  $\calb_G(\Omega)$ comes from disjoint unions, i.e.,
   \begin{equation*}
   	  (S,\pi,B) \oplus (S',\pi',B') \cong (S \sqcup S',\pi \sqcup \pi',B \sqcup B').  
   \end{equation*} 
\end{definition}

\begin{definition}[Support for $\calb_G(\Omega)$]\label{def:support-B(X)}
  The \emph{support} of an object $\bfB = (S,\pi,B)$ in $\calb_G(\Omega)$ is defined
  to be 
  \begin{equation*}
  	  \suppobj (\bfB) := \pi(S) \subseteq \Omega
  \end{equation*} 
  The \emph{support} of a morphism
  $\varphi \colon (S,\pi,B) \to (S',\pi',B')$ in $\calb_G(\Omega)$ is defined
  to be   
  \begin{equation*}
    \suppX (\varphi) := \Big\{ \twovec{\pi'(s')}{g\pi(s)} \,\Big|\, s \in S, s' \in S', g \in G,
  g \in \supp_G(\varphi_{s}^{s'}) \Big\} \subseteq \Omega \times \Omega.
  \end{equation*} 
  We set $\suppX (\bfB) := \suppX (\id_{B}) = \{(\pi(s), g\pi(s)) \mid s \in S, g \in \supp_B(\pi(\sigma))\}$.
  The \emph{$G$-support} of a morphism $\varphi$ in $\calb_G(\Omega)$ is
  \begin{equation*}
  	  \suppG(\varphi) = \bigcup_{s \in S,s' \in S'}  \supp_{\calb}(\varphi_s^{s'}).
  \end{equation*}
  We set $\suppG(B) := \suppG (\id_{B})$. 
\end{definition}


\subsection{The diagonal tensor product for Hecke categories with $G$-support}%
\label{subsec:The_diagonal_tensor_product_for_Hecke_categories_with_G-support}

In this subsection we  want to define a bilinear pairing
\begin{equation}
\-- \otimes \-- \; \colon \cals^G(\Omega) \times \calb \to \calb_G(\Omega),
\label{pairing_cals(X)_otimes_calb_to_calb(X)}
\end{equation}
where $\calb$ is a Hecke category with $G$-support 
in the sense of Definition~\ref{def:Hecke_categories_with_G-support}.

Given $\bfV = (\Sigma,c) \in \cals^G(\Omega)$ and $B\in \calb$,  we define $\bfV \otimes_0 B$
in $\calb_G(\Omega)$ by
\begin{equation}
  \bfV \otimes_0 B := \big(\Sigma, \,c,\, \sigma \mapsto  B|_{\supp(B)_\sigma} \big),
  \label{bfV_otimes_0_B}
\end{equation}
where $\supp(B)_{\sigma} = \supp(B) \cap G_\sigma$ denotes the isotropy group of $\sigma \in \Sigma$ for
the action of $\supp(B) \subseteq G$ on $\Sigma$.  \\[1mm]
For morphisms $\rho \colon \bfV = (\Sigma,c) \to \bfV'= (\Sigma',c')$ in $\cals^G(\Omega)$ and
$\varphi \colon B\to B' $ in $\calb$ we define 
\begin{equation}
  \rho \otimes_0 \varphi \colon  \bfV \otimes_0 B \to  \bfV' \otimes_0 B'
  \label{rho_otimes_0_varphi}
\end{equation}
as follows. Thanks to the axiom \emph{Support cofinality},
we can define a morphism $\varphi_{\sigma}^{\sigma'} \colon B|_{\supp(B)_{\sigma}} \to B'|_{\supp(B')_{\sigma'}}$
by the composite
\begin{equation}
\varphi_{\sigma}^{\sigma'} \colon B|_{\supp(B)_{\sigma}}  \xrightarrow{r_{B,\supp(B)_{\sigma}}}
B \xrightarrow{\varphi} B' \xrightarrow{i_{B',\supp(B')_{\sigma'}}} B'|_{\supp(B')_{\sigma'}}.
\label{def:of_varphi_sigma_upper_sigma'}
\end{equation}
By the property \emph{Morphism Additivity} and
Lemma~\ref{lem:uniqueness_in_morphisms_additivity}~\ref{lem:uniqueness_in_morphisms_additivity:axiom}
one can write
\[
\varphi_{\sigma}^{\sigma'}   = \sum_{x \in \supp(B'_{\sigma'})\backslash G / \supp(B)_{\sigma}} \varphi_{\sigma}^{\sigma'}[x]
\]
for morphisms
$\varphi_{\sigma}^{\sigma'}[x] \colon B|_{\supp(B)_{\sigma}} \to
B'|_{\supp(B')_{\sigma'}}$ that are uniquely determined by
$\supp(\varphi_{\sigma}^{\sigma'}[x]) = \supp(\varphi_{\sigma}^{\sigma'}) \cap x$.  For an
element $x$ in $G_{\sigma'}\backslash G /G_{\sigma}$ define the integer
\begin{equation}
  \rho_{x\sigma}^{\sigma'} := \rho_{g\sigma}^{\sigma'}
  \label{rho_(xsigma)_upper_sigma'}
\end{equation}
for any $g \in x$. This definition is indeed independent of the choice of $g$, since any
other choice is of the form $g_1gg_0$ for $g_0 \in  G_{\sigma}$
and $g_1 \in G_{\sigma'}$ and we get
$\rho_{g_1gg_0\sigma}^{\sigma'} = \rho_{g_1g\sigma}^{g_1\sigma'} =  \rho_{g\sigma}^{\sigma'}$.
For an element $x \in \supp(B')_{\sigma'} \backslash G /\supp(B)_{\sigma}$
we abuse the notation and put $\rho_{x\sigma}^{\sigma'} := \rho_{g\sigma}^{\sigma'}$
the integer $\rho_{\pr(x)\sigma}^{\sigma'} := \rho_{g\sigma}^{\sigma'}$ for the projection
$\pr \colon \supp(B')_{\sigma'} \backslash G /\supp(B)_{\sigma} \to G_{\sigma'}\backslash G /G_{\sigma}$.

We  define
\begin{equation}
(\rho \otimes_0 \varphi)_{\sigma}^{\sigma'}
  = \sum_{x \in \supp(B')_{\sigma'}\backslash G / \supp(B)_{\sigma}}
  \rho_{x\sigma}^{\sigma'} \cdot \varphi_{\sigma}^{\sigma'}[x].
  \label{def_of_(rho_otimes_varphi)_sigma_upper_sigmaprime}
\end{equation}
This definition makes sense, since 
$\{x \in \supp(B')_{\sigma'}\backslash G / \supp(B)_{\sigma} \mid 
  \varphi_{\sigma}^{\sigma'}[x] \not= 0\}$
  is  the finite set $ \supp(B')_{\sigma'}\backslash \supp(\varphi_{\sigma}^{\sigma'}) / \supp(B)_{\sigma}$.
 
\begin{lemma}\label{lem:the_diagonal_tensor_product_is_compatible_with_composition}
  Let $\rho \colon \bfV = (\Sigma,c) \to \bfV'= (\Sigma',c')$ and
  $\rho' \colon \bfV '= (\Sigma',c') \to \bfV''= (\Sigma'',c'')$ be composable morphisms
  in $\cals^G(\Omega)$ and $\varphi \colon B\to B'$ and $\varphi' \colon B'\to B''$ be
  composable morphisms in $\calb$.

  Then we get in $\calb_G(\Omega)$
  \[(\rho' \circ \rho) \otimes_0 (\varphi' \circ \varphi) = (\rho' \otimes_0 \varphi') \circ
    (\rho \otimes_0 \varphi).
  \]
\end{lemma}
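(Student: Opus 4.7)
The plan is to verify the identity of morphisms in $\calb_G(\Omega)$ by comparing matrix entries at each fixed pair $(\sigma,\sigma'')\in\Sigma\times\Sigma''$. Writing $\bfV''=(\Sigma'',c'')$, the right-hand side at this entry unfolds by the matrix multiplication in $\calb_G(\Omega)$ and the definition~\eqref{def_of_(rho_otimes_varphi)_sigma_upper_sigmaprime} of $\otimes_0$ as the triple sum
\[
  \sum_{\sigma'\in\Sigma'}\,\sum_{y,z}\,
  \rho'^{\sigma''}_{z\sigma'}\,\rho_{y\sigma}^{\sigma'}\cdot
  \bigl(\varphi'^{\sigma''}_{\sigma'}[z]\circ\varphi_\sigma^{\sigma'}[y]\bigr),
\]
where $y$ ranges over $\supp(B')_{\sigma'}\backslash G/\supp(B)_\sigma$ and $z$ ranges over $\supp(B'')_{\sigma''}\backslash G/\supp(B')_{\sigma'}$. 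The left-hand side at the same entry is
\[
  \sum_{x}\bigl(\rho'\circ\rho\bigr)_{x\sigma}^{\sigma''}\cdot
  \bigl(\varphi'\circ\varphi\bigr)_\sigma^{\sigma''}[x]
  =\sum_{x}\sum_{\sigma'}\rho'^{\sigma''}_{\sigma'}\rho_{x\sigma}^{\sigma'}\cdot
  \bigl(\varphi'\circ\varphi\bigr)_\sigma^{\sigma''}[x],
\]
with $x$ ranging over $\supp(B'')_{\sigma''}\backslash G/\supp(B)_\sigma$.

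The first key step is to observe that for each fixed $\sigma'\in\Sigma'$, the axiom \emph{Support cofinality} (namely $r_{B',\supp(B')_{\sigma'}}\circ i_{B',\supp(B')_{\sigma'}}=\id_{B'}$) collapses the intermediate projection/inclusion at $B'$, so that by the very definition~\eqref{def:of_varphi_sigma_upper_sigma'}
\[
  \varphi'^{\sigma''}_{\sigma'}\circ\varphi_\sigma^{\sigma'}
  =\bigl(\varphi'\circ\varphi\bigr)_\sigma^{\sigma''}.
\]
Expanding the left side as $\sum_{y,z}\varphi'^{\sigma''}_{\sigma'}[z]\circ\varphi_\sigma^{\sigma'}[y]$ via \emph{Morphism Additivity}, and using the support estimate $\supp\bigl(\varphi'^{\sigma''}_{\sigma'}[z]\circ\varphi_\sigma^{\sigma'}[y]\bigr)\subseteq z\cdot y$ together with the fact that $z\cdot y$ is bi-invariant under $\supp(B'')_{\sigma''}$ on the left and $\supp(B)_\sigma$ on the right (hence a union of double cosets $x$), one may re-decompose and apply Lemma~\ref{lem:uniqueness_in_morphisms_additivity}\ref{lem:uniqueness_in_morphisms_additivity:axiom} to conclude, for each fixed $\sigma'$ and each $x$,
\[
  \bigl(\varphi'\circ\varphi\bigr)_\sigma^{\sigma''}[x]
  =\sum_{y,z\colon x\subseteq z\cdot y}\bigl(\varphi'^{\sigma''}_{\sigma'}[z]\circ\varphi_\sigma^{\sigma'}[y]\bigr)[x].
\]

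It then remains to match scalar coefficients. The conceptual cleanup is that for each fixed $\sigma'$ the two inner sums should already agree: one shows
\[
  \sum_{x}\rho'^{\sigma''}_{\sigma'}\rho_{x\sigma}^{\sigma'}\cdot\bigl(\varphi'\circ\varphi\bigr)_\sigma^{\sigma''}[x]
  =\sum_{y,z}\rho'^{\sigma''}_{z\sigma'}\rho_{y\sigma}^{\sigma'}\cdot\bigl(\varphi'^{\sigma''}_{\sigma'}[z]\circ\varphi_\sigma^{\sigma'}[y]\bigr),
\]
by collecting on each side the contributions with fixed support class $x$ and invoking the displayed decomposition above; summing over $\sigma'$ then gives the lemma. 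The coefficients are handled using definition~\eqref{rho_(xsigma)_upper_sigma'} together with axiom~\ref{def:S_upper_G:G} for $\rho,\rho'$ in Definition~\ref{def:S_upper_G}, which guarantees that $\rho_{y\sigma}^{\sigma'}$, $\rho'^{\sigma''}_{z\sigma'}$, and $\rho_{x\sigma}^{\sigma'}$ depend only on the relevant double coset, so that the pieces produced by the two sides indeed pair with the correct integer weights.

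The main obstacle is the bookkeeping in this final coefficient matching: the double-coset levels $\supp(B)_\sigma$, $\supp(B')_{\sigma'}$, $\supp(B'')_{\sigma''}$ are three different levels of granularity, and the products $z\cdot y$ may split as unions of several cosets $x$. The rescue is that the uniqueness part of Lemma~\ref{lem:uniqueness_in_morphisms_additivity} allows us to identify each $x$-piece independently, so the column-finiteness assumption~\ref{def:S_upper_G:col-finite} makes all sums finite and the scalar identity reduces to the trivial fact that $\rho_{y\sigma}^{\sigma'}$ and $\rho'^{\sigma''}_{z\sigma'}$ are constant on the relevant bi-invariant subsets of $G$.
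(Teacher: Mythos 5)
Your proposal correctly identifies the first clean observation — that $\varphi'^{\sigma''}_{\sigma'}\circ\varphi_\sigma^{\sigma'} = (\varphi'\circ\varphi)_\sigma^{\sigma''}$ follows from the \emph{Support cofinality} identity $r_{B',L}\circ i_{B',L} = \id_{B'}$ — and it correctly names \emph{Morphism Additivity} and the uniqueness lemma as the tools to decompose composites into double-coset pieces. However, the "coefficient matching" step is where the proposal has a genuine gap.

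The proposed per-$\sigma'$ scalar identity does not make sense as written, because the term $\rho_{x\sigma}^{\sigma'}$ is only well-defined for $x$ a double coset in $\supp(B')_{\sigma'}\backslash G/\supp(B)_\sigma$: the constancy of $g\mapsto\rho_{g\sigma}^{\sigma'}$ on such $x$ uses precisely that $\supp(B')_{\sigma'}\subseteq G_{\sigma'}$. On the left-hand side you need $x$ to range over $\supp(B'')_{\sigma''}\backslash G/\supp(B)_\sigma$, and the individual term $\rho'^{\sigma''}_{\sigma'}\rho_{x\sigma}^{\sigma'}$ is then \emph{not} well-defined, because conjugating the representative by an element of $\supp(B'')_{\sigma''}$ changes $\rho_{g\sigma}^{\sigma'}$ unless the re-indexing is absorbed by the sum over $\sigma'$. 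In other words, the full sum $\sum_{\sigma'}\rho'^{\sigma''}_{\sigma'}\rho_{g\sigma}^{\sigma'}$ is independent of the choice of $g\in x$, but its $\sigma'$-by-$\sigma'$ decomposition is not. Your plan of fixing $\sigma'$ and matching $x$-pieces therefore cannot go through: the putative per-$\sigma'$, per-$x$ identity $\rho'^{\sigma''}_{\sigma'}\rho_{x\sigma}^{\sigma'} = \rho'^{\sigma''}_{z\sigma'}\rho_{y\sigma}^{\sigma'}$ (for $x\subseteq zy$) is false in general, as a short calculation with a representative $g = u''g_z u' g_y u$ shows: the element $u''g_z$ shifts $\sigma'$ and is not killed unless $\supp(B')_{\sigma'}$ swallows it.

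This is exactly the difficulty the paper's proof addresses by a dedicated reduction step that you have omitted: via the identities~\eqref{lem:the_diagonal_tensor_product_is_compatible_with_composition:(a)} and~\eqref{lem:the_diagonal_tensor_product_is_compatible_with_composition:(b)}, one replaces $\varphi$ by $i_{B',K'}\circ\varphi$ and $\varphi'$ by $\varphi'\circ r_{B',K'}$ for a suitable compact open $K'$, and then chooses $K' = \supp(B')\cap\bigcap_{\sigma'\in\widehat{\Sigma'}}G_{\sigma'}$ so that Assumption~\ref{ass:lem:the_diagonal_tensor_product_is_compatible_with_composition} holds. Under that assumption all relevant $\supp(B')_{\sigma'}$ collapse to $\supp(B')$, the coset levels become uniform in $\sigma'$, the morphisms $\varphi_\sigma^{\sigma'}$ and $\varphi'^{\sigma''}_{\sigma'}$ become $\sigma'$-independent, and the scalar identity~\eqref{lem:the_diagonal_tensor_product_is_compatible_with_composition:(3)} can finally be proved by choosing compatible representatives and re-indexing the $\sigma'$-sum. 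Without this reduction the bookkeeping does not "reduce to a trivial fact" — it is where the actual content of the proof lies, and your proposal leaves it unaddressed.
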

\begin{proof} For the remainder  of the proof we fix $\sigma \in \Sigma$ and $\sigma'' \in \Sigma''$. 
  We have to show
  \begin{equation}
    \bigl((\rho' \circ \rho) \otimes_0 (\varphi' \circ \varphi)\bigr)_{\sigma}^{\sigma''}
    = \sum_{\sigma' \in \Sigma'} (\rho' \otimes_0 \varphi')_{\sigma'}^{\sigma''} \circ
    (\rho \otimes_0 \varphi)_{\sigma}^{\sigma'}.
  \label{lem:to_do_for_fixed_sigma_andsigmaprimeprime}
\end{equation}

We introduce the following abbreviations
$S  =  \supp(B)$, $S'  =  \supp(B')$, and $S''  =  \supp(B'')$. 
For a compact subgroup $K \subseteq G$ and $\sigma \in \Sigma$,
$\sigma' \in \Sigma'$, $\sigma'' \in \Sigma''$, we write
$K_{\sigma}  =  K \cap G_{\sigma}$, $K_{\sigma'}  =  K \cap G_{\sigma'}$, and $K_{\sigma''}  =  K \cap G_{\sigma''}$.
Put
\begin{multline}
  \widehat{\Sigma'} = \{\sigma' \in \Sigma' \mid
  \rho_{x\sigma}^{\sigma'}  \not= 0, \varphi_{\sigma}^{\sigma'}[x] \not= 0\;
  \text{for some} \; x \in S'_{\sigma'}\backslash G /S_{\sigma} 
  \\
  \text{and} \; {\rho'}_{x'\sigma'}^{\sigma''} \not= 0, \; {\varphi'}_{\sigma'}^{\sigma''}[x']  \not= 0\;
   \text{for some} \;x' \in S_{\sigma''}\backslash G /S'_{\sigma'}\}.
  \label{widehat(Sigmaprime)}
\end{multline}
The set $\{\sigma' \in \Sigma' \mid \varphi_{\sigma}^{\sigma'} \not= 0\}$
is finite  and for $\sigma' \in \Sigma'$ and
$x \in S'_{\sigma'}\backslash G /S_{\sigma}$ we have the implication
$\varphi_{\sigma}^{\sigma'}[x] \not= 0 \implies \varphi_{\sigma}^{\sigma'} \not= 0$. This implies that
  $\widehat{\Sigma'}$ is finite.

We get from the definitions as $\{\sigma' \in \Sigma' \mid (\rho' \otimes_0 \varphi')_{\sigma'}^{\sigma''} \not= 0 \; \text{and}\;
    (\rho \otimes_0 \varphi)_{\sigma}^{\sigma'} \not= 0\}$ is contained in $\widehat{\Sigma'}$
\begin{equation}
  \sum_{\sigma' \in \Sigma'} (\rho' \otimes_0 \varphi')_{\sigma'}^{\sigma''} \circ
    (\rho \otimes_0 \varphi)_{\sigma}^{\sigma'}
    = \sum_{\sigma' \in \widehat{\Sigma'}} (\rho' \otimes_0 \varphi')_{\sigma'}^{\sigma''} \circ
    (\rho \otimes_0 \varphi)_{\sigma}^{\sigma'}.
  \label{replace_Sigma_upper_prime_by_widehat(Sigma_upper_prime)}
\end{equation}

In the first step of the proof we show that we can assume without loss of generality
\begin{assumption}\label{ass:lem:the_diagonal_tensor_product_is_compatible_with_composition}
 For every $\sigma' \in \widehat{\Sigma'}$, 
we have $\supp(B') \subseteq G_{\sigma'}$.
\end{assumption}
  
  Consider any compact open subgroup $K'  \subseteq S'$. Next we show for
  every  $\sigma' \in \Sigma'$
  \begin{eqnarray}
    i_{B'|_{S'_{\sigma'}},K'_{\sigma'}}  \circ (\rho \otimes_0 \varphi)_{\sigma}^{\sigma'}
    \label{lem:the_diagonal_tensor_product_is_compatible_with_composition:(a)}
    & = &
    \bigl(\rho \otimes_0 (i_{B',K'} \circ \varphi)\bigr)_{\sigma}^{\sigma'};
    \\
     (\rho' \otimes_0 \varphi')_{\sigma'}^{\sigma''} \circ r_{B'|_{S_{\sigma'}},K'_{\sigma'} } 
    & = &
    \bigl(\rho' \otimes_0 (\varphi' \circ r_{B'|_{K'},K'_{\sigma'}})\bigr)_{\sigma'}^{\sigma''}.
    \label{lem:the_diagonal_tensor_product_is_compatible_with_composition:(b)}
  \end{eqnarray}
  We begin
  with~\eqref{lem:the_diagonal_tensor_product_is_compatible_with_composition:(a)}.  Let
  $\pr \colon S_{\sigma'}\backslash G /S_{\sigma} \to K'_{\sigma'}\backslash G /S_{\sigma}$
  be the canonical projection.  By \emph{Morphism Additivity} we get
  \begin{eqnarray}
    \label{lem:the_diagonal_tensor_product_is_compatible_with_composition:(c)}
    (i_{B',K'} \circ \varphi)_{\sigma}^{\sigma'}
    & = &
    \sum_{y \in K_{\sigma'}\backslash G /S_{\sigma}} (i_{B',K'} \circ \varphi)_{\sigma}^{\sigma'}[y]
  \end{eqnarray}
  for morphisms  $(i_{B',K'} \circ \varphi)_{\sigma}^{\sigma'}[y] \colon B|_{S_{\sigma}} \to B'|_{K'_{\sigma'}}$
  with $\supp((i_{B',K'} \circ \varphi)_{\sigma}^{\sigma'}[y]) = \supp((i_{B',K'} \circ \varphi)_{\sigma}^{\sigma'}) \cap y$.
    Analogously we get
  \begin{eqnarray}
    \label{lem:the_diagonal_tensor_product_is_compatible_with_composition:(d)}
    \varphi_{\sigma}^{\sigma'}
    & = &
    \sum_{x \in S_{\sigma'}\backslash G /S_{\sigma}} \varphi_{\sigma}^{\sigma'}[x]
  \end{eqnarray}
  for morphisms
  $\varphi_{\sigma}^{\sigma'}[x] \colon B|_{S_{\sigma}} \to B'|_{S'_{\sigma'}}$ with
  $\supp(\varphi_{\sigma}^{\sigma'}[x]) = \supp(\varphi_{\sigma}^{\sigma'}) \cap x$.
  We have
  \begin{eqnarray*}
    \supp\bigl(i_{B'|_{S'_{\sigma'},K'_{\sigma'}}} \circ \varphi_{\sigma}^{\sigma'}[x]\bigr)
    &   \subseteq &
   \supp(i_{B'|_{S'_{\sigma'},K'_{\sigma'}}}) \cdot \supp(\varphi_{\sigma}^{\sigma'}[x])
    \\
    & = &
    S'_{\sigma'}  \cdot \supp(\varphi_{\sigma}^{\sigma'}[x])
    \\
    & = &
    \supp(\varphi_{\sigma}^{\sigma'}[x])
    \\
    & = &
    \supp(\varphi_{\sigma}^{\sigma'}) \cap x.
  \end{eqnarray*}      
By \emph{Morphism Additivity}   we get a decomposition
\begin{eqnarray}
  \label{lem:the_diagonal_tensor_product_is_compatible_with_composition:(e)}
  i_{B'|_{S'_{\sigma'},K'_{\sigma'}}} \circ \varphi_{\sigma}^{\sigma'}[x]
  & = &
  \sum_{\substack{y \in K_{\sigma'}\backslash G /S_{\sigma} \\ \pr(y) = x}}
  \bigl(i_{B'|_{S'_{\sigma'},K'_{\sigma'}}} \circ \varphi_{\sigma}^{\sigma'}[x]\bigr)[y]
\end{eqnarray}
for morphisms $\bigl(i_{B'|_{S'_{\sigma'},K'_{\sigma'}}} \circ \varphi_{\sigma}^{\sigma'}[x]\bigr)[y] \colon B|_{S_{\sigma}} \to B'_{K'_{\sigma'}}$
with
\[
  \supp\bigl((i_{B'|_{S'_{\sigma'},K'_{\sigma'}}} \circ \varphi_{\sigma}^{\sigma'}[x])[y]\bigr)
  =
\supp\bigl(i_{B'|_{S'_{\sigma'},K'_{\sigma'}}} \circ \varphi_{\sigma}^{\sigma'}[x]\bigr) \cap y \subseteq y.
\]
Hence we get
\begin{eqnarray}
  \label{lem:the_diagonal_tensor_product_is_compatible_with_composition:(f)}
  &&
   \\
  i_{B'|_{S'_{\sigma'},K'_{\sigma'}}} \circ \varphi_{\sigma}^{\sigma'}
  & \stackrel{\eqref{lem:the_diagonal_tensor_product_is_compatible_with_composition:(d)}}{=} &
  i_{B'|_{S'_{\sigma'},K'_{\sigma'}}} \circ \left(\sum_{x \in S_{\sigma'}\backslash G /S_{\sigma}} \varphi_{\sigma}^{\sigma'}[x]\right)
   \nonumber
  \\
  & = &
  \sum_{x \in S_{\sigma'}\backslash G /S_{\sigma}}  i_{B'|_{S'_{\sigma'},K'_{\sigma'}}} \circ \varphi_{\sigma}^{\sigma'}[x]
  \nonumber
  \\
  & \stackrel{\eqref{lem:the_diagonal_tensor_product_is_compatible_with_composition:(e)}}{=} &
  \sum_{x \in S_{\sigma'}\backslash G /S_{\sigma}} 
  \sum_{\substack{y \in K_{\sigma'}\backslash G /S_{\sigma} \\ \pr(y) = x}}
  \bigl(i_{B'|_{S'_{\sigma'},K'_{\sigma'}}} \circ \varphi_{\sigma}^{\sigma'}[x]\bigr)[y].
  \nonumber
\end{eqnarray}
We have
\begin{multline}
  \label{lem:the_diagonal_tensor_product_is_compatible_with_composition:(g)}
  i_{B'|_{S'_{\sigma'},K'_{\sigma'}}} \circ \varphi_{\sigma}^{\sigma'}
  \stackrel{\eqref{def:of_varphi_sigma_upper_sigma'}}{=}
  i_{B'|_{S'_{\sigma'},K'_{\sigma'}}} \circ i_{B',S'_{\sigma'}} \circ \varphi \circ r_{B,S_{\sigma}}
  = i_{B',K'_{\sigma'}} \circ \varphi \circ r_{B,S_{\sigma}}
  \\
  = i_{B'|_{K'},K'_{\sigma'}} \circ i_{B',K'} \circ \varphi \circ r_{B,S_{\sigma}}         
 \stackrel{\eqref{def:of_varphi_sigma_upper_sigma'}}{=}   (i_{B',K'} \circ \varphi)_{\sigma}^{\sigma'}.
\end{multline}

Hence we get from~\eqref{lem:the_diagonal_tensor_product_is_compatible_with_composition:(f)}
and~\eqref{lem:the_diagonal_tensor_product_is_compatible_with_composition:(g)}
\begin{equation}
 (i_{B',K'} \circ \varphi)_{\sigma}^{\sigma'}
  =
  \sum_{x \in S_{\sigma'}\backslash G /S_{\sigma}}  \; \sum_{\substack{y \in K_{\sigma'}\backslash G /S_{\sigma} \\ \pr(y) = x}}
  \bigl(i_{B'|_{S'_{\sigma'},K'_{\sigma'}}} \circ \varphi_{\sigma}^{\sigma'}[x]\bigr)[y]
  \label{lem:the_diagonal_tensor_product_is_compatible_with_composition:(h)}
\end{equation}
for morphisms $\bigl(i_{B'|_{S'_{\sigma'},K'_{\sigma'}}} \circ \varphi_{\sigma}^{\sigma'}[x]\bigr)[y] \colon B_{\sigma}
\to (B|_{S'_{\sigma'}})|_{K'^{\sigma'}} = B|_{K'_{\sigma'}}$ such that 
$\supp\bigl((i_{B'|_{S'_{\sigma'},K'_{\sigma'}}} \circ \varphi_{\sigma}^{\sigma'}[x])[y]\bigr)  \subseteq y$ holds.
By \emph{Morphisms additivity} we get
\begin{equation}
  (i_{B',K'} \circ \varphi)_{\sigma}^{\sigma'}
  =
  \sum_{y \in K_{\sigma'}\backslash G /S_{\sigma}}
  (i_{B',K'} \circ \varphi)_{\sigma}^{\sigma'}[y]
  \label{lem:the_diagonal_tensor_product_is_compatible_with_composition:(i)}
\end{equation}
for morphisms $(i_{B',K'} \circ \varphi)_{\sigma}^{\sigma'}[y] \colon B|_{\sigma} \to B'|_{K_{\sigma'}}$
with $\supp\bigl((i_{B',K'} \circ \varphi)_{\sigma}^{\sigma'}[y]\bigr)
= \supp\bigl((i_{B',K'} \circ \varphi)_{\sigma}^{\sigma'}\bigr) \cap y \subseteq y$.
We conclude from
Lemma~\ref{lem:uniqueness_in_morphisms_additivity}~\ref{lem:uniqueness_in_morphisms_additivity:general}
that for all $x \in S_{\sigma'}\backslash G /S_{\sigma}$ and
$y \in K_{\sigma'}\backslash G /S_{\sigma}$ with $\pr(y) = x$ we have
\begin{equation}
  \bigl(i_{B'|_{S'_{\sigma'},K'_{\sigma'}}} \circ \varphi_{\sigma}^{\sigma'}[x]\bigr)[y]
  = (i_{B',K'} \circ \varphi)_{\sigma}^{\sigma'}[y].
  \label{lem:the_diagonal_tensor_product_is_compatible_with_composition:(j)}
\end{equation}
Now we compute
\begin{eqnarray*}
  \lefteqn{\bigl(\rho \otimes_0 (i_{B',K'} \circ \varphi)\bigr)_{\sigma}^{\sigma'}}
  & &
  \\
  & \stackrel{\eqref{def_of_(rho_otimes_varphi)_sigma_upper_sigmaprime}}{=} &
  \sum_{y \in K_{\sigma'}\backslash G /S_{\sigma}} \rho_{y\sigma}^{\sigma'} \cdot (i_{B',K'} \circ \varphi)_{\sigma}^{\sigma'}[y]
  \\
  & = &
   \sum_{x \in S_{\sigma'}\backslash G /S_{\sigma}}  \sum_{\substack{y \in K_{\sigma'}\backslash G /S_{\sigma} \\ \pr(y) = x}}
  \rho_{y\sigma}^{\sigma'}\cdot (i_{B',K'} \circ \varphi)_{\sigma}^{\sigma'}[y]
\\
  & = &
   \sum_{x \in S_{\sigma'}\backslash G /S_{\sigma}}  \sum_{\substack{y \in K_{\sigma'}\backslash G /S_{\sigma} \\ \pr(y) = x}}
  \rho_{x\sigma}^{\sigma'} \cdot (i_{B',K'} \circ \varphi)_{\sigma}^{\sigma'}[y]
\\
  & = &
        \sum_{x \in S_{\sigma'}\backslash G /S_{\sigma}}   \rho_{x\sigma}^{\sigma'} \cdot
        \sum_{\substack{y \in K_{\sigma'}\backslash G /S_{\sigma} \\ \pr(y) = x}}
  (i_{B',K'} \circ \varphi)_{\sigma}^{\sigma'}[y]
  \\
  & \stackrel{\eqref{lem:the_diagonal_tensor_product_is_compatible_with_composition:(j)}}{=} &
        \sum_{x \in S_{\sigma'}\backslash G /S_{\sigma}}   \rho_{x\sigma}^{\sigma'} \cdot
        \sum_{\substack{y \in K_{\sigma'}\backslash G /S_{\sigma} \\ \pr(y) = x}}
  \bigl(i_{B'|_{S'_{\sigma'},K'_{\sigma'}}} \circ \varphi_{\sigma}^{\sigma'}[x]\bigr)[y]
  \\
  & \stackrel{\eqref{lem:the_diagonal_tensor_product_is_compatible_with_composition:(e)}}{=} &
        \sum_{x \in S_{\sigma'}\backslash G /S_{\sigma}}   \rho_{x\sigma}^{\sigma'} \cdot        
        \bigl(i_{B'|_{S'_{\sigma'},K'_{\sigma'}}} \circ \varphi_{\sigma}^{\sigma'}[x]\bigr)
  \\
  & = &
  i_{B'|_{S'_{\sigma'},K'_{\sigma'}}}  \circ \sum_{x \in S_{\sigma'}\backslash G /S_{\sigma}}   \rho_{x\sigma}^{\sigma'} \cdot        
        \varphi_{\sigma}^{\sigma'}[x]
  \\
  & \stackrel{\eqref{def_of_(rho_otimes_varphi)_sigma_upper_sigmaprime}}{=} &
  i_{B'|_{S'_{\sigma'},K'_{\sigma'}}} \circ  (\rho \otimes_0 \varphi)_{\sigma}^{\sigma'}.
\end{eqnarray*}

This finishes the proof
of~\eqref{lem:the_diagonal_tensor_product_is_compatible_with_composition:(a)}.
The one
of~\eqref{lem:the_diagonal_tensor_product_is_compatible_with_composition:(b)}
is analogous.

Now we conclude
\begin{eqnarray*}
  \lefteqn{\bigl((\rho' \otimes_0 (\varphi' \circ r_{B',K'}))
  \circ (\rho \otimes_0 (i_{B',K'} \circ \varphi))\bigr)_{\sigma}^{\sigma''}}
  & &
  \\
  & = &
  \sum_{\sigma'} \bigl(\rho' \otimes_0 (\varphi' \circ r_{B',K'})\bigr)_{\sigma'}^{\sigma''}
  \circ \bigl(\rho \otimes_0 (i_{B',K'} \circ \varphi)\bigr)_{\sigma}^{\sigma'}
  \\
  & \stackrel{\eqref{lem:the_diagonal_tensor_product_is_compatible_with_composition:(a)},%
~\eqref{lem:the_diagonal_tensor_product_is_compatible_with_composition:(b)}}{=} &
   \sum_{\sigma'}  (\rho' \otimes_0 \varphi')_{\sigma'}^{\sigma''} \circ r_{B'|_{S_{\sigma'}},K'_{\sigma'} }  \circ
   i_{B'|_{S_{\sigma'}},K'_{\sigma'}}  \circ (\rho \otimes_0 \varphi)_{\sigma}^{\sigma'}
    \\
  & = &
   \sum_{\sigma'}
   (\rho' \otimes_0 \varphi')_{\sigma'}^{\sigma''}  \circ (\rho \otimes_0 \varphi)_{\sigma}^{\sigma'}
  \\
  & = &
    \bigl((\rho' \otimes_0 \varphi')
  \circ (\rho \otimes_0 \varphi)\bigr)_{\sigma}^{\sigma''}.
\end{eqnarray*}
We also have
\begin{eqnarray*}
 \bigl((\rho' \circ \rho) \otimes_0 (\varphi' \circ \varphi)\bigr)_{\sigma}^{\sigma''}
  & = &
 \bigl((\rho' \circ \rho) \otimes_0 ((\varphi'  \circ r_{B',K'}) \circ (i_{B',K'} \circ \varphi))\bigr)_{\sigma}^{\sigma''}.
\end{eqnarray*}
Hence
\[\bigl((\rho' \circ \rho) \otimes_0 (\varphi' \circ \varphi)\bigr)_{\sigma}^{\sigma''}
    =
 \bigl((\rho' \otimes_0 \varphi') \circ (\rho \otimes_0 \varphi)\bigr)_{\sigma}^{\sigma''}
  \]
  is true, if 
  \[ \bigl((\rho' \circ \rho) \otimes_0 ((\varphi'  \circ r_{B'K'}) \circ (i_{B',K'} \circ \varphi))\bigr)_{\sigma}^{\sigma''}
    = \bigl((\rho' \otimes_0 (\varphi' \circ r_{B',K'}))
    \circ (\rho \otimes_0 (i_{B',K'} \circ \varphi))\bigr)_{\sigma}^{\sigma''}
  \]
  holds.  Now specify $K'$ to be
  \[
  K' = \supp(B') \cap \bigcap_{\sigma' \in \widehat{\Sigma'}}  G_{\sigma'}.
  \]
  Since the set $ \widehat{\Sigma'}$ defined in~\eqref{widehat(Sigmaprime)}  is finite and $\Sigma'$ is smooth,
  $K'$ is a compact open subgroup of $\supp(B')$ and $K' \subseteq G_{\sigma'}$ holds for every
  $\sigma'  \in  \widehat{\Sigma'}$. Hence
 $\varphi'  \circ r_{B',K'}$ and $i_{B',K'} \circ \varphi$ satisfy 
  Assumption~\ref{ass:lem:the_diagonal_tensor_product_is_compatible_with_composition}.
  We conclude from~\eqref{replace_Sigma_upper_prime_by_widehat(Sigma_upper_prime)}
  that we can make  without loss of generality the
  Assumption~\ref{ass:lem:the_diagonal_tensor_product_is_compatible_with_composition},
  when proving~\eqref{lem:to_do_for_fixed_sigma_andsigmaprimeprime}.

  By \emph{Morphism Additivity} we can write
  \begin{equation}
    {\varphi'}_{\sigma'}^{\sigma''}[x']
        \circ    \varphi_{\sigma}^{\sigma'}[x]
    =
    \sum_{x'' \in S''_{\sigma''}\backslash G / S_{\sigma}} \bigl({\varphi'}_{\sigma'}^{\sigma''}[x']
        \circ    \varphi_{\sigma}^{\sigma'}[x]\bigr)[x'']
    \label{lem:the_diagonal_tensor_product_is_compatible_with_composition:(0)}
  \end{equation}    
  for morphisms
  $\bigl({\varphi'}_{\sigma'}^{\sigma''}[x']  \circ \varphi_{\sigma}^{\sigma'}[x]\bigr)[x''] \colon  B|_{S_{\sigma}} \to B''|_{S''_{\sigma''}}$
  with $\supp\bigl(({\varphi'}_{\sigma'}^{\sigma''}[x']  \circ \varphi_{\sigma}^{\sigma'}[x])[x'']\bigr) =
 \supp\bigl( {\varphi'}_{\sigma'}^{\sigma''}[x']
 \circ    \varphi_{\sigma}^{\sigma'}[x]\bigr) \cap x'' \subseteq x''$.
 
  We compute
  \begin{eqnarray}
    \label{lem:the_diagonal_tensor_product_is_compatible_with_composition:(1)}
    & &
    \\
  \lefteqn{\bigl((\rho' \otimes_0 \varphi') \circ
    (\rho \otimes_0 \varphi)\bigr)_{\sigma}^{\sigma''}}
    & &
    \nonumber
    \\
    & = &
    \sum_{\sigma' \in \Sigma'}
    ({\rho'} \otimes_0 \varphi')_{\sigma'}^{\sigma''}\circ 
    (\rho \otimes_0 \varphi)_{\sigma}^{\sigma'}
    \nonumber
    \\
    & \stackrel{\eqref{def_of_(rho_otimes_varphi)_sigma_upper_sigmaprime}}{=} &
    \sum_{\sigma' \in \Sigma'}
   \left(\sum_{x' \in S''_{\sigma''}\backslash G / S'_{\sigma'}}
  {\rho'}_{x'\sigma'}^{\sigma''} \cdot {\varphi'}_{\sigma'}^{\sigma''}[x']\right)
     \nonumber
    \\
    & &
        \hspace{50mm} \circ
          \left(\sum_{x \in S'_{\sigma'}\backslash G / S_{\sigma}}
          \rho_{x\sigma}^{\sigma'} \cdot \varphi_{\sigma}^{\sigma'}[x] \right)
     \nonumber
    \\
    & = &
          \sum_{\sigma' \in \Sigma'} \;\sum_{x' \in S''_{\sigma''}\backslash G / S'_{\sigma'}} \;
          \sum_{x \in S'_{\sigma'}\backslash G / S_{\sigma}}
         {\rho'}_{x'\sigma'}^{\sigma''} \cdot \rho_{x\sigma}^{\sigma'} \cdot {\varphi'}_{\sigma'}^{\sigma''}[x']
        \circ    \varphi_{\sigma}^{\sigma'}[x]
          \nonumber
    \\
    & \stackrel{\eqref{widehat(Sigmaprime)}}{=} &
          \sum_{\sigma' \in \widehat{\Sigma'}} \;\sum_{x' \in S''_{\sigma''}\backslash G / S'_{\sigma'}} \;
          \sum_{x \in S'_{\sigma'}\backslash G / S_{\sigma}}
         {\rho'}_{x'\sigma'}^{\sigma''} \cdot \rho_{x\sigma}^{\sigma'} \cdot {\varphi'}_{\sigma'}^{\sigma''}[x']
        \circ    \varphi_{\sigma}^{\sigma'}[x]
        \nonumber
    \\
    & \stackrel{\eqref{lem:the_diagonal_tensor_product_is_compatible_with_composition:(0)}}{=} &
          \sum_{\sigma' \in \widehat{\Sigma'}} \;\sum_{x' \in S''_{\sigma''}\backslash G / S'_{\sigma'}} \;
          \sum_{x \in S'_{\sigma'}\backslash G / S_{\sigma}} 
          \nonumber
    \\
    & &
         \hspace{20mm}
        {\rho'}_{x'\sigma'}^{\sigma''} \cdot  \rho_{x\sigma}^{\sigma'} \cdot
        \left(\sum_{x'' \in S''_{\sigma''}\backslash G / S_{\sigma}} \bigl({\varphi'}_{\sigma'}^{\sigma''}[x']
        \circ    \varphi_{\sigma}^{\sigma'}[x]\bigr)[x'']\right)
        \nonumber
    \\
    & = &
         \sum_{\sigma' \in \widehat{\Sigma'}} \;
          \sum_{x' \in S''_{\sigma''}\backslash G / S'_{\sigma'}} \;
          \sum_{x \in S'_{\sigma'}\backslash G / S_{\sigma}} \; \sum_{x'' \in S''_{\sigma''}\backslash G / S_{\sigma}} \;
          \nonumber
    \\
    & &
         \hspace{40mm}
          {\rho'}_{x'\sigma'}^{\sigma''} \cdot \rho_{x\sigma}^{\sigma'} \cdot \bigl({\varphi'}_{\sigma'}^{\sigma''}[x']
          \circ    \varphi_{\sigma}^{\sigma'}[x]\bigr)[x''].
        \nonumber
  \end{eqnarray}
  Since
  \[
    \supp\bigl({\varphi'}_{\sigma'}^{\sigma''}[x']  \circ    \varphi_{\sigma}^{\sigma'}[x]\bigr)
    \subseteq \supp\bigl({\varphi'}_{\sigma'}^{\sigma''}[x']\bigr)  \cdot    \supp\bigl(\varphi_{\sigma}^{\sigma'}[x]\bigr)
    \subseteq x'x
  \]
  holds, we have
  \begin{equation}
   \left({\varphi'}_{\sigma'}^{\sigma''}[x']
     \circ    \varphi_{\sigma}^{\sigma'}[x]\right)[x''] \not= 0 \implies x'' \subseteq x'x.
   \label{condition_xprimeprime_subset_xprimex}
 \end{equation}
 We have $S'_{\sigma'} = S'$ for $\sigma' \in \widehat{\Sigma'}$
 by Assumption~\ref{ass:lem:the_diagonal_tensor_product_is_compatible_with_composition}.
 Moreover we get from~\eqref{def:of_varphi_sigma_upper_sigma'} for $\sigma' \in \widehat{\Sigma'}$
 \begin{eqnarray}
   {\varphi'}_{\sigma'}^{\sigma''} & =  & {\varphi'}^{\sigma''};
   \label{varphiprime_independent_of_sigmaprime}                                      
   \\
   \varphi_{\sigma}^{\sigma'}  & = &  \varphi_{\sigma},
   \label{varphi_independent_of_sigmaprime}                
 \end{eqnarray}
 if we put ${\varphi'}^{\sigma''} = i_{B'',S_{\sigma''}} \circ \varphi''$ and
 $\varphi_{\sigma} = \varphi \circ r_{B,S_{\sigma}}$. Note that
 ${\varphi'}^{\sigma''}$ and $\varphi_{\sigma}$ and the index sets
$ S''_{\sigma''}\backslash G / \supp(B')$ and $\supp(B') \backslash G / S_{\sigma}$ and 
are independent of $\sigma'$.
 Hence we conclude from~\eqref{lem:the_diagonal_tensor_product_is_compatible_with_composition:(1)},~%
 \eqref{condition_xprimeprime_subset_xprimex},~\eqref{varphiprime_independent_of_sigmaprime},
 and~\eqref{varphi_independent_of_sigmaprime}
 \begin{eqnarray}
    \label{lem:the_diagonal_tensor_product_is_compatible_with_composition:(2)}
    & &
    \\
  \lefteqn{\bigl((\rho' \otimes_0 \varphi') \circ
    (\rho \otimes_0 \varphi)\bigr)_{\sigma}^{\sigma''}}
    & &
    \nonumber
    \\
    & = &
    \sum_{\sigma' \in \widehat{\Sigma'}} \;
          \sum_{x' \in S''_{\sigma''}\backslash G / \supp(B')} \;
          \sum_{x \in \supp(B') \backslash G / S_{\sigma}} \;
          \sum_{\substack{x'' \in S''_{\sigma''}\backslash G / S_{\sigma}\\x'' \subseteq x'x}}
          \nonumber
    \\
    & &
         \hspace{40mm}
          {\rho'}_{x'\sigma'}^{\sigma''} \cdot \rho_{x\sigma}^{\sigma'} \cdot \bigl({\varphi'}^{\sigma''}[x']
          \circ    \varphi_{\sigma}[x]\bigr)[x''].
        \nonumber
   \\
    & = &    
          \sum_{x' \in S''_{\sigma''}\backslash G / \supp(B')} \;
          \sum_{x \in \supp(B') \backslash G / S_{\sigma}}\;
           \sum_{\substack{x'' \in S''_{\sigma''}\backslash G / S_{\sigma}\\x'' \subseteq x'x}}
          \nonumber
    \\
    & &
         \hspace{40mm}
        \left(\sum_{\sigma' \in \widehat{\Sigma'}} {\rho'}_{x'\sigma'}^{\sigma''} \cdot \rho_{x\sigma}^{\sigma'} \right) \cdot
        \bigl({\varphi'}^{\sigma''}[x'] \circ    \varphi_{\sigma}[x]\bigr)[x''].
        \nonumber
 \end{eqnarray}

  Next we show that for any $x''  \in S''_{\sigma''}\backslash G / S_{\sigma}$
  we get for any choice of $x \in \supp(B')\backslash G / S_{\sigma}$
  and $x' \in S''_{\sigma''}\backslash G / \supp(B')$ with $x'' \subseteq x'x$
  \begin{eqnarray}
   (\rho' \circ \rho)_{x''\sigma}^{\sigma''}
   & = &
    \sum_{\sigma' \in \Sigma'} {\rho'}_{x'\sigma'}^{\sigma''} \cdot \rho_{x\sigma}^{\sigma'}.
    \label{lem:the_diagonal_tensor_product_is_compatible_with_composition:(3)}     
  \end{eqnarray}
Choose elements $g,g',g'' \in G$ with $g \in x$, $g' \in x'$ and $g'' \in x''$. The condition
$x'' \subseteq x'x$ says  that we can find $u \in S_{\sigma}$, $u' \in S'_{\sigma'}$, and $u'' \in S''_{\sigma''}$
 such that $u''g'u'gu = g''$ holds. We compute
 \begin{multline*}
   (\rho' \circ \rho)_{x''\sigma}^{\sigma''}
   = (\rho' \circ \rho)_{g''\sigma}^{\sigma''}
   = \sum_{\sigma' \in \Sigma'}  {\rho'}_{\sigma'}^{\sigma''} \circ \rho_{g''\sigma}^{\sigma'}
   = \sum_{\sigma' \in \Sigma'}  {\rho'}_{\sigma'}^{\sigma''} \circ \rho_{u''g'u'gu\sigma}^{\sigma'}
  \\
    = \sum_{\sigma' \in \Sigma'}  {\rho'}_{\sigma'}^{\sigma''} \circ \rho_{u''g'u'g\sigma}^{\sigma'}
    =  \sum_{\sigma' \in \Sigma'}  {\rho'}_{u''g'\sigma'}^{\sigma''} \circ \rho_{u''g'u'g\sigma}^{u''g'\sigma'}.    
  \end{multline*}
  Since we have
 \[
   {\rho'}_{x'\sigma'}^{\sigma''} \cdot \rho_{x\sigma}^{\sigma'}
   = 
   {\rho'}_{g'\sigma'}^{\sigma''}\cdot \rho_{g\sigma}^{\sigma'}
   \\
   = 
   {\rho'}_{u''g'\sigma'}^{u''\sigma''}\cdot \rho_{u''g'u'g\sigma}^{u''g'u'\sigma'}
   \\
   = 
    {\rho'}_{u''g'\sigma'}^{\sigma''}\cdot \rho_{u''g'u'g\sigma}^{u''g'\sigma'}  
 \]
 equation~\ref{lem:the_diagonal_tensor_product_is_compatible_with_composition:(3)}  follows.
 We conclude from~\eqref{lem:the_diagonal_tensor_product_is_compatible_with_composition:(2)}
 and~\eqref{lem:the_diagonal_tensor_product_is_compatible_with_composition:(3)}
 \begin{eqnarray}
    \label{lem:the_diagonal_tensor_product_is_compatible_with_composition:(4)}
    & &
    \\
  \bigl((\rho' \otimes_0 \varphi') \circ
    (\rho \otimes_0 \varphi)\bigr)_{\sigma}^{\sigma''}
    & = &    
          \sum_{x' \in S''_{\sigma''}\backslash G / \supp(B')} \;
          \sum_{x \in \supp(B') \backslash G / S_{\sigma}}\;
           \sum_{\substack{x'' \in S''_{\sigma''}\backslash G / S_{\sigma}\\x'' \subseteq x'x}}
          \nonumber
    \\
    & &
         \hspace{30mm}
        (\rho' \circ \rho)_{x''\sigma}^{\sigma''} \cdot
        \bigl({\varphi'}^{\sigma''}[x'] \circ    \varphi_{\sigma}[x]\bigr)[x''].
        \nonumber
 \end{eqnarray}

 Next we compute 
 \begin{eqnarray}
   \label{lem:the_diagonal_tensor_product_is_compatible_with_composition:(5)}
   & &
   \\
   \lefteqn{\bigl((\rho' \circ \rho) \otimes_0 (\varphi' \circ \varphi)\bigr)_{\sigma}^{\sigma''}}
    & &
   \nonumber
   \\
    & \stackrel{\eqref{def_of_(rho_otimes_varphi)_sigma_upper_sigmaprime}}{=}&
    \sum_{x'' \in S''_{\sigma''}\backslash G / S_{\sigma}}
          (\rho' \circ \rho)_{x''\sigma}^{\sigma''} \cdot (\varphi' \circ \varphi)_{\sigma}^{\sigma''}[x'']
    \nonumber
   \\
    & \stackrel{\eqref{def:of_varphi_sigma_upper_sigma'} }{=} &
    \sum_{x'' \in S''_{\sigma''}\backslash G / S_{\sigma}}
     (\rho' \circ \rho)_{x''\sigma}^{\sigma''} \cdot ({\varphi'}^{\sigma''}  \circ \varphi_{\sigma})[x'']
    \nonumber
   \\
    & = &
    \sum_{x'' \in S''_{\sigma''}\backslash G / S_{\sigma}}
          (\rho' \circ \rho)_{x''\sigma}^{\sigma''}
    \nonumber
   \\
    & & \hspace{5mm} 
        \cdot \left(\left(\sum_{x' \in S''_{\sigma''} \backslash G /\supp(B')} {\varphi'}^{\sigma''} [x'] \right) \circ
      \left(\sum_{x \in \supp(B') \backslash H /S_{\sigma}} \varphi_{\sigma}[x] \right)\right)[x'']
   \nonumber
   \\
    & = &
          \sum_{x'' \in S''_{\sigma''}\backslash G / S_{\sigma}} \;\sum_{x' \in S''_{\sigma''} \backslash G /\supp(B')}
          \; \sum_{x \in \supp(B') \backslash H /S_{\sigma}} 
   \nonumber
    \\
  & & \hspace {50mm}
(\rho' \circ \rho)_{x''\sigma}^{\sigma''} \cdot
          \bigl({\varphi'}^{\sigma''} [x'] \circ\varphi_{\sigma}[x]\bigr)[x'']
          \nonumber
   \\
    & \stackrel{\eqref{condition_xprimeprime_subset_xprimex}}{=}&
  \sum_{x' \in S''_{\sigma''}\backslash G / \supp(B')} \;
          \sum_{x \in \supp(B') \backslash G / S_{\sigma}}\;
          \sum_{\substack{x'' \in S''_{\sigma''}\backslash G / S_{\sigma}\\x'' \subseteq x'x}}
   \nonumber
    \\
    & & \hspace {50mm}
        (\rho' \circ \rho)_{x''\sigma}^{\sigma''} \cdot
\bigl({\varphi'}^{\sigma''} [x'] \circ\varphi_{\sigma}[x]\bigr)[x''].
        \nonumber
 \end{eqnarray}

 Now Lemma~\ref{lem:the_diagonal_tensor_product_is_compatible_with_composition}
 follows from~\eqref{lem:the_diagonal_tensor_product_is_compatible_with_composition:(4)}
and~\eqref{lem:the_diagonal_tensor_product_is_compatible_with_composition:(5)}.
\end{proof}

In general $\id_{\bfV} \otimes_0 \id_B$ is not the identity on $V \otimes_0 B$ which will
force as later to pass to idempotent completions.  However there is a favourite situation,
where this is not necessary, which we will describe next.

\begin{lemma}\label{lem:id-is-id_special}
 Let $\bfV = (\Sigma,c)$ be an object of $\cals^G(\Omega)$ and let $B$ be an object of
  $\calb$. Suppose that $\Sigma$ is fixed pointwise by $\supp(B)$.

  Then $\id_{\bfV} \otimes_0 \id_B = \id_{\bfV \otimes_0 B}$.
\end{lemma}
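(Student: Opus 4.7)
The plan is to unwind the definitions using the hypothesis that $\supp(B)$ fixes $\Sigma$ pointwise, which collapses all the isotropy-dependent data in the construction. First, I would observe that for every $\sigma \in \Sigma$ we have $\supp(B)_{\sigma} = \supp(B) \cap G_{\sigma} = \supp(B)$, and hence by the final clause of \emph{Support cofinality} the object $B|_{\supp(B)_{\sigma}}$ is just $B$ itself, with $i_{B,\supp(B)_{\sigma}} = r_{B,\supp(B)_{\sigma}} = \id_B$. In particular, the object $\bfV \otimes_0 B$ is $(\Sigma,c,\sigma \mapsto B)$, so its identity morphism in $\calb_G(\Omega)$ is the matrix $\delta_{\sigma}^{\sigma'} \cdot \id_B$ by the definition of the identity in $\calb_G(\Omega)$.

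Next I would compute $(\id_{\bfV} \otimes_0 \id_B)_{\sigma}^{\sigma'}$ using the formula~\eqref{def_of_(rho_otimes_varphi)_sigma_upper_sigmaprime}. The morphism $(\id_B)_{\sigma}^{\sigma'}$ defined in~\eqref{def:of_varphi_sigma_upper_sigma'} becomes the triple composite $\id_B \circ \id_B \circ \id_B = \id_B$ by the previous step. The decomposition $\id_B = \sum_{x \in \supp(B)\backslash G / \supp(B)} (\id_B)_{\sigma}^{\sigma'}[x]$ guaranteed by \emph{Morphism Additivity} has pieces indexed by double cosets, with $\supp((\id_B)_{\sigma}^{\sigma'}[x]) = \supp(B) \cap x$. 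Since $\supp(B)$ is itself the trivial double coset and is disjoint from every other double coset, the uniqueness statement in Lemma~\ref{lem:uniqueness_in_morphisms_additivity}~\ref{lem:uniqueness_in_morphisms_additivity:axiom} forces $(\id_B)_{\sigma}^{\sigma'}[x] = \id_B$ when $x$ is the trivial double coset and $(\id_B)_{\sigma}^{\sigma'}[x] = 0$ otherwise.

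Finally I would evaluate the scalar $(\id_{\bfV})_{x\sigma}^{\sigma'}$ at the trivial double coset $x = \supp(B)$. Since $\Sigma$ is fixed pointwise by $\supp(B)$, for any $g \in \supp(B)$ we have $g\sigma = \sigma$, and the definition~\eqref{rho_(xsigma)_upper_sigma'} gives $(\id_{\bfV})_{x\sigma}^{\sigma'} = (\id_{\bfV})_{\sigma}^{\sigma'} = \delta_{\sigma}^{\sigma'}$. Substituting everything back into~\eqref{def_of_(rho_otimes_varphi)_sigma_upper_sigmaprime} yields $(\id_{\bfV} \otimes_0 \id_B)_{\sigma}^{\sigma'} = \delta_{\sigma}^{\sigma'} \cdot \id_B$, which is exactly the identity of $\bfV \otimes_0 B$.

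There is no real obstacle here; the only subtlety is to keep straight that the fixed-point hypothesis is used twice, once to collapse $B|_{\supp(B)_\sigma}$ to $B$ via \emph{Support cofinality} (making the composite defining $(\id_B)_\sigma^{\sigma'}$ trivial) and once to ensure $(\id_\bfV)_{x\sigma}^{\sigma'} = (\id_\bfV)_\sigma^{\sigma'}$ for $x$ the trivial double coset. The remainder is a direct unraveling of definitions.
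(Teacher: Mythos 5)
Your proof is correct and follows essentially the same route as the paper's: both collapse $\supp(B)_\sigma$ to $\supp(B)$ via the fixed-point hypothesis (so $B|_{\supp(B)_\sigma}=B$ and $(\id_B)_\sigma^{\sigma'}=\id_B$), isolate the trivial double coset as the only nonzero term in the Morphism-Additivity decomposition of $\id_B$, and then use the fixed-point hypothesis a second time to identify $\rho_{x\sigma}^{\sigma'}$ with the Kronecker delta. The only cosmetic difference is that you invoke the uniqueness lemma explicitly where the paper filters the sum step by step; the substance is identical.
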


\begin{proof}
  Since $g\sigma=\sigma$ for $g \in \supp(B)$, we have $\supp(B) = \supp(B)_{\sigma}$ and
  $(\id_B)_{\sigma}^{\sigma'} = \id_B$ for every $\sigma \in \Sigma$ and the object $\bfV \otimes_0 B$ in $\calb_G(\Omega)$
  is given by $(B,\Sigma, c_B)$ for the constant function $c_B \colon  \Sigma \to \ob(\calb)$ with value $B$.
  Recall that the   identity of $\bfV = (\Sigma,c)$ is given by the morphism
  $\rho =(\rho_{\sigma}^{\sigma'})_{\sigma,\sigma' \in \Sigma}$ with
  $\rho_\sigma^{\sigma'}= 1$ for $\sigma = \sigma'$ and $\rho_\sigma^{\sigma'} = 0$ for
  $\sigma \not= \sigma'$.  Now we compute
  \begin{eqnarray*}
    (\id_{\bfV} \otimes_0 \id_{B})_{\sigma}^{\sigma'}
    & \stackrel{\eqref{def_of_(rho_otimes_varphi)_sigma_upper_sigmaprime}}{=} &
   \sum_{x \in \supp(B)_{\sigma'}\backslash G / \supp(B)_{\sigma}}
    \rho_{x\sigma}^{\sigma'} \cdot (\id_{B})_{\sigma}^{\sigma'}[x].
    \\
    & = &
   \sum_{\substack{x \in \supp(B)\backslash G / \supp(B)\\ \sigma' = x\sigma}}
    (\id_{B})_{\sigma}^{\sigma'}[x].
    \\
    & =&
   \sum_{\substack{x \in \supp(B)\backslash G / \supp(B)\\ \sigma' = x\sigma, \id_B[x] \not = 0}}
    (\id_{B})_{\sigma}^{\sigma'}[x].
    \\
    & =&
   \sum_{\substack{x \in \supp(B)\backslash G / \supp(B)\\ \sigma' = x\sigma, x \cap \supp(B) \not= \emptyset}}
    (\id_{B})_{\sigma}^{\sigma'}[x].
    \\
    & =&
   \sum_{\substack{x \in \supp(B)\backslash G / \supp(B)\\ \sigma' = x\sigma, x  = \supp(B)}}
    (\id_{B})_{\sigma}^{\sigma'}[x].
    \\
    & =&
   \sum_{\substack{x \in \supp(B)\backslash G / \supp(B)\\ \sigma' = \sigma, x  = \supp(B)}}
    (\id_{B})_{\sigma}^{\sigma'}[x].
    \\
    & = &
    \begin{cases}
      \id_B & \sigma' = \sigma;
      \\
      0 & \sigma' \not= \sigma.
    \end{cases}
  \end{eqnarray*}
  This shows $\id_{\bfV} \otimes_0 \id_B = \id_{\bfV \otimes_0 B}$.
\end{proof}


\subsection{The diagonal tensor product in the case of a Hecke algebra}%
\label{subsec:The_diagonal_tensor_product_in_the_case_of_a_Hecke_algebra}

It is not needed for our purposes but illuminating to figure out what the diagonal tensor
product~\eqref{pairing_cals(X)_otimes_calb_to_calb(X)} becomes for the Hecke category with
$Q$-support $\calb = \calb(G;R,\rho,\omega)$ of
Subsection~\ref{subsec:The_Hecke_category_with_Q-support_associated_to_Hecke_algebras}.
Given an object $(\bfV,\Sigma)$ in $\cals^Q(\Omega)$ and an object $K$ of
$\calb(Q;R,\rho,\omega)$, which is by definition just a compact open subgroup of $G$, we
get
\[
\bfV \otimes_0 K  = (\Sigma,c,  \sigma \mapsto K \cap \alpha^{-1}(Q_{\sigma}))
\]
It is not hard to check that for morphisms $\rho \colon \bfV = (\Sigma,c) \to \bfV'= (\Sigma',c')$ in $\cals^G(\Omega)$ and
$s \colon K\to K' $ in $\calb(Q;R,\rho,\omega)$ the morphism
$\rho \otimes s \colon \bfV \otimes_0 K \to \bfV' \otimes_0K'$ is given by the formula
\begin{equation}
\bigl((\rho \otimes_0 \varphi)_\sigma^{\sigma'}\bigr)(g) = \rho_{g\sigma}^{\sigma'}  \cdot s(g).
\label{diagonal_product_for_calb(G;R,rho;omega)}
\end{equation}

The following example explains the original root of the diagonal tensor
product~\eqref{pairing_cals(X)_otimes_calb_to_calb(X)}.

\begin{example}\label{exa:diagonal_tensor_product_over_group_rings}
  Suppose that $Q$ is discrete, $\rho$ is trivial and no normal character is present,
  i.e., $N = \{1\}$ and $G = Q$.  Then the Hecke algebra $\calh(G;R,\rho,\omega)$ is just
  the group ring $RG$.  The category $\cals^G(G/G)$ can be viewed as a
  subcategory of the category $\MODcat{RG}_{\operatorname{f},R}$ of $RG$-modules whose
  underlying $R$-module is free  by sending an object $(\Sigma,c)$ to the permutation
  $RG$-module $R\Sigma$.  Up to equivalence  $\Idem(\calb^G(G/G))$ is
  the category of finitely generated projective $RG$-modules
  for $\calb = \calb(G;R)$.
  
  The diagonal tensor product~\eqref{pairing_cals(X)_otimes_calb_to_calb(X)} for $\Omega = G/G$  comes
  from the pairing
  \[
    \MODcat{RG}_{\operatorname{f},R} \times \MODcat{RG}_{\operatorname{f}} \to
    \MODcat{RG}_{\operatorname{f}}
  \]
  sending $(M,P)$ to $M \otimes_R P$ equipped with the diagonal $G$-action.
\end{example}


\subsection{Construction of $\cals^G(\Omega) \times \calb_G(\Lambda) \to \calb_G(\Omega \times \Lambda)$}%
\label{subsec:The_diagonal_tensor_product_for_Hecke_categories_with_G-support_extended}

Let $\Omega$ and $\Lambda$ be $G$-sets. In this subsection we want
to extend the pairing~\eqref{pairing_cals(X)_otimes_calb_to_calb(X)} to a bilinear pairing.
\begin{equation}
\-- \otimes_0 \-- \; \colon \cals^G(\Omega) \times \calb_G(\Lambda) \to \calb_G(\Omega \times \Lambda).
\label{pairing_cals(X)_otimes_calb(Lambda)_to_calb(X_times_Lambda)}
\end{equation}

Let $\bfV = (\Sigma,c) \in \cals^G(\Omega)$ and
$\bfB = (S,\pi,B) \in \calb_G(\Lambda)$.  We define
\begin{equation*}
  \bfV \otimes_0 \bfB := \big(\Sigma \times S,\; c \times \pi,\; (\sigma,s)
  \mapsto B(s)|_{\supp_{\calb}(B(s))_{\sigma}}\bigr) \in \calb_G(\Omega \times \Lambda),
\end{equation*}
where $\supp(B(s))_{\sigma}= G_{\sigma} \cap \supp_{\calb}(B(s))$.  For morphisms
$\rho \colon \bfV = (\Sigma,c) \to \bfV' = (\Sigma',c')$ in $\cals^G(\Omega)$ and
$\varphi \colon \bfB = (S,\pi,B) \to \bfB' = (S',\pi',B')$ in $\calb(\Lambda)$ we
define $\rho \otimes \varphi \colon \bfV \otimes_0 \bfA \to \bfV' \otimes_0 \bfA'$ by
\begin{equation}
  (\rho \otimes \varphi)_{(\sigma,s)}^{(\sigma',s')} := \rho \otimes \varphi_{s}^{s'}
  \label{rho_otimes_varphi)_(sigma,s)_upper_(sigma',s'}
\end{equation}
using the pairing of Subsection~\ref{subsec:The_diagonal_tensor_product_for_Hecke_categories_with_G-support}.

The proof of Lemma~\ref{lem:the_diagonal_tensor_product_is_compatible_with_composition}
can easily be extended to the pairing~\ref{pairing_cals(X)_otimes_calb(Lambda)_to_calb(X_times_Lambda)}

\begin{lemma}\label{lem:the_diagonal_tensor_product_is_compatible_with_composition_Lambda}
  Let $\rho \colon \bfV = (\Sigma,c) \to \bfV'= (\Sigma',c')$ and
  $\rho' \colon \bfV '= (\Sigma',c') \to \bfV''= (\Sigma'',c'')$ be composable morphisms
  in $\cals^G(\Omega)$ and $\varphi \colon \bfB\to \bfB'$ and $\varphi' \colon \bfB'\to \bfB''$ be
  composable morphisms in $\calb_G(\Lambda)$.

  Then we get in $\calb_G(\Omega \times \Lambda)$
  \[(\rho' \circ \rho) \otimes_0 (\varphi' \circ \varphi) = (\rho' \otimes_0 \varphi') \circ
    (\rho \otimes_0 \varphi).
  \]
\end{lemma}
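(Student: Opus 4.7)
The plan is to reduce this statement to the previous lemma by unwinding the matrix-entry definition of the composition in $\calb_G(\Omega \times \Lambda)$ and then applying Lemma~\ref{lem:the_diagonal_tensor_product_is_compatible_with_composition} for each fixed pair of indices $s, s''$. Fix $(\sigma,s) \in \Sigma \times S$ and $(\sigma'',s'') \in \Sigma'' \times S''$; it suffices to compare the $\bigl((\sigma,s),(\sigma'',s'')\bigr)$-entries of both sides.

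First I would compute the right hand side by expanding the matrix multiplication in $\calb_G(\Omega \times \Lambda)$:
\[
\bigl((\rho' \otimes_0 \varphi') \circ (\rho \otimes_0 \varphi)\bigr)_{(\sigma,s)}^{(\sigma'',s'')}
= \sum_{(\sigma',s')} (\rho' \otimes_0 \varphi')_{(\sigma',s')}^{(\sigma'',s'')} \circ (\rho \otimes_0 \varphi)_{(\sigma,s)}^{(\sigma',s')},
\]
and then apply the defining formula~\eqref{rho_otimes_varphi)_(sigma,s)_upper_(sigma',s'} to rewrite each factor as a $\sigma,\sigma'$-entry of a morphism in $\calb_G(\Omega)$:
\[
= \sum_{s'} \sum_{\sigma'} \bigl(\rho' \otimes_0 {\varphi'}_{s'}^{s''}\bigr)_{\sigma'}^{\sigma''} \circ \bigl(\rho \otimes_0 \varphi_s^{s'}\bigr)_{\sigma}^{\sigma'}.
\]
Here the sums over $s'$ and $\sigma'$ are each finite by column-finiteness and by the fact that the morphisms in $\cals^G(\Omega)$ and the Morphism Additivity decomposition of ${\varphi'}_{s'}^{s''} \circ \varphi_s^{s'}$ contribute only finitely many nonzero terms.

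Next I would recognize the inner sum over $\sigma'$ as the $(\sigma,\sigma'')$-entry of $(\rho' \otimes_0 {\varphi'}_{s'}^{s''}) \circ (\rho \otimes_0 \varphi_s^{s'})$ in $\calb_G(\Omega)$. Applying Lemma~\ref{lem:the_diagonal_tensor_product_is_compatible_with_composition} to the pair of morphisms $\rho, \rho'$ in $\cals^G(\Omega)$ and $\varphi_s^{s'}, {\varphi'}_{s'}^{s''}$ in $\calb$ transforms this into $\bigl((\rho' \circ \rho) \otimes_0 ({\varphi'}_{s'}^{s''} \circ \varphi_s^{s'})\bigr)_{\sigma}^{\sigma''}$. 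After swapping the order of summation one arrives at
\[
\sum_{s'} \bigl((\rho' \circ \rho) \otimes_0 ({\varphi'}_{s'}^{s''} \circ \varphi_s^{s'})\bigr)_{\sigma}^{\sigma''}.
\]

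Finally, I would invoke bilinearity of the pairing in Subsection~\ref{subsec:The_diagonal_tensor_product_for_Hecke_categories_with_G-support}, namely $\rho \otimes_0 (\psi_1 + \psi_2) = \rho \otimes_0 \psi_1 + \rho \otimes_0 \psi_2$ whenever the summands are all morphisms between the same pair of objects of $\calb$. This is visible from~\eqref{def_of_(rho_otimes_varphi)_sigma_upper_sigmaprime} together with the uniqueness statement in Lemma~\ref{lem:uniqueness_in_morphisms_additivity}, which forces the Morphism-Additivity decomposition of $\psi_1 + \psi_2$ to be the sum of those of $\psi_1$ and $\psi_2$. Moving the sum over $s'$ inside the tensor product and using $(\varphi' \circ \varphi)_s^{s''} = \sum_{s'} {\varphi'}_{s'}^{s''} \circ \varphi_s^{s'}$ yields $\bigl((\rho' \circ \rho) \otimes_0 (\varphi' \circ \varphi)_s^{s''}\bigr)_\sigma^{\sigma''}$, which is by definition the $\bigl((\sigma,s),(\sigma'',s'')\bigr)$-entry of $(\rho' \circ \rho) \otimes_0 (\varphi' \circ \varphi)$. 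The main conceptual point — and the only non-bookkeeping step — is the application of Lemma~\ref{lem:the_diagonal_tensor_product_is_compatible_with_composition}; all remaining difficulty is in verifying bilinearity of $\otimes_0$ and keeping track that the rearrangements of sums are legitimate, which follows from finiteness of the nonzero contributions.
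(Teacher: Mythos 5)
Your proposal is correct, and it actually supplies a cleaner argument than the paper itself, whose ``proof'' of Lemma~\ref{lem:the_diagonal_tensor_product_is_compatible_with_composition_Lambda} is literally the one-line remark that ``the proof of Lemma~\ref{lem:the_diagonal_tensor_product_is_compatible_with_composition} can easily be extended.'' That phrasing suggests re-running the long computation of the earlier proof with extra indices $s,s',s''$ carried along; your version instead packages Lemma~\ref{lem:the_diagonal_tensor_product_is_compatible_with_composition} as a black box, applies it entry-wise via the defining formula~\eqref{rho_otimes_varphi)_(sigma,s)_upper_(sigma',s'}, and finishes by additivity. This is a genuine economy.

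The one step that deserves to be spelled out a bit more, and that you correctly flag as the nontrivial part, is the bilinearity $\rho \otimes_0 (\psi_1+\psi_2)=\rho\otimes_0\psi_1+\rho\otimes_0\psi_2$ for morphisms $\psi_1,\psi_2\colon B\to B'$ in $\calb$. Looking at~\eqref{def_of_(rho_otimes_varphi)_sigma_upper_sigmaprime}, what needs checking is that the Morphism-Additivity decomposition $\varphi\mapsto \varphi_{\sigma}^{\sigma'}[x]$ is $\IZ$-linear. The map $\varphi\mapsto\varphi_{\sigma}^{\sigma'}$ of~\eqref{def:of_varphi_sigma_upper_sigma'} is visibly linear, and then for $\varphi=\psi_1+\psi_2$ both $\sum_x\varphi_{\sigma}^{\sigma'}[x]$ and $\sum_x\bigl((\psi_1)_{\sigma}^{\sigma'}[x]+(\psi_2)_{\sigma}^{\sigma'}[x]\bigr)$ are decompositions of $\varphi_{\sigma}^{\sigma'}$ into summands supported in pairwise disjoint double cosets $x$, so the difference terms are killed by Lemma~\ref{lem:uniqueness_in_morphisms_additivity}~\ref{lem:uniqueness_in_morphisms_additivity:general}. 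This is exactly the argument you indicate, and it is correct; it would be worth making it this explicit if this were to be written up, since the paper nowhere records bilinearity of $\otimes_0$ in the second slot. The finiteness justifications for rearranging the double sum over $(\sigma',s')$ are also as you say: column-finiteness of $\varphi$ bounds $s'$, while for fixed $s'$ only finitely many $\sigma'$ contribute because $\supp(\varphi_s^{s'})$ is compact and $\Sigma$ is smooth.
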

Note  that $(\id_{\bfV} \otimes \id_{\bfA}) = \id_{\bfV \otimes_0 \bfA}$ can fail,
but this can be fixed in the idempotent completion.
Lemma~\ref{lem:the_diagonal_tensor_product_is_compatible_with_composition_Lambda} implies
that $(\id_{\bfV} \otimes \id_{\bfA}) $ is an
idempotent endomorphism of $\bfV \otimes_0 \bfA$, and we define
\begin{equation*}
  \bfV \otimes \bfA := \big(\bfV \otimes_0 \bfA,
  \id_{\bfV} \otimes_0 \id_{\bfA} \big) \in \Idem(\calb_G(\Omega \times \Lambda)). 
\end{equation*}
Then $(\id_{\bfV} \otimes_0 \id_{\EA}) \colon \bfV \otimes \bfA \to \bfV \otimes \bfA$
is $\id_{\bfV \otimes \EA}$, and we obtain a bilinear functor
\begin{equation}
  \-- \otimes \-- \; \colon \cals^G(\Omega) \times \calb_G(\Lambda)
  \to \Idem(\calb_G(\Omega \times \Lambda)).
  \label{final_diagonal_pairing}
\end{equation}

The following observation will often allow us to get rid of idempotent completions. 
\begin{lemma}\label{lem:id-is-id}
  Let $\bfV = (\Sigma,c) \in \cals^G(\Omega)$ and $\bfB = (S,\pi,B) \in \contrCatUcoef{G}{\Lambda}{\calb}$.  If
  $\Sigma$ is fixed pointwise by all $B(s)$, then $\bfV \otimes \bfB = \bfV \otimes_0 \bfB$.
\end{lemma}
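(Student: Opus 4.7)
The plan is to reduce the statement entrywise to the already-established Lemma~\ref{lem:id-is-id_special}, which handles the case of a single object $B$ in $\calb$. The pairing~\eqref{final_diagonal_pairing} differs from~\eqref{pairing_cals(X)_otimes_calb_to_calb(X)} only by being indexed by the set $S$ appearing in $\bfB=(S,\pi,B)$, and the formula~\eqref{rho_otimes_varphi)_(sigma,s)_upper_(sigma',s'} assembles a morphism in $\calb_G(\Omega\times\Lambda)$ out of the pieces $\rho\otimes_0\varphi_s^{s'}$ from Subsection~\ref{subsec:The_diagonal_tensor_product_for_Hecke_categories_with_G-support}. So the matrix entries of $\id_{\bfV}\otimes_0\id_{\bfB}$ can be computed one $(s,s')$-pair at a time.

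First I would write out the $(\sigma,s),(\sigma',s')$-matrix entry of $\id_{\bfV}\otimes_0\id_{\bfB}$ using~\eqref{rho_otimes_varphi)_(sigma,s)_upper_(sigma',s'}. By definition of the identity morphism in $\calb_G(\Lambda)$, $(\id_{\bfB})_s^{s'}=\id_{B(s)}$ when $s'=s$ and $0$ otherwise. Plugging this in, one gets
\[
(\id_{\bfV}\otimes_0\id_{\bfB})_{(\sigma,s)}^{(\sigma',s')} \;=\; \id_{\bfV}\otimes_0 (\id_{\bfB})_s^{s'}
\;=\; \begin{cases} \id_{\bfV}\otimes_0 \id_{B(s)} & s'=s,\\ 0 & s'\neq s.\end{cases}
\]
The vanishing in the off-diagonal case follows directly from~\eqref{def_of_(rho_otimes_varphi)_sigma_upper_sigmaprime}, since every term in that sum is a scalar multiple of a decomposition summand of the zero morphism and hence vanishes.

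In the diagonal case $s'=s$, the hypothesis says $\Sigma$ is fixed pointwise by $\supp_\calb(B(s))$, so Lemma~\ref{lem:id-is-id_special} (applied to the object $B(s)$ of $\calb$) gives $\id_{\bfV}\otimes_0\id_{B(s)}=\id_{\bfV\otimes_0 B(s)}$. Unwinding the description of the object $\bfV\otimes_0\bfB=(\Sigma\times S,\,c\times\pi,\,(\sigma,s)\mapsto B(s)|_{\supp_\calb(B(s))_\sigma})$ from the construction of the extended pairing, the identity morphism $\id_{\bfV\otimes_0\bfB}$ has as its $(\sigma,s),(\sigma',s')$-entry exactly $\id_{B(s)|_{\supp_\calb(B(s))_\sigma}}=\id_{\bfV\otimes_0 B(s)}$ when $(\sigma',s')=(\sigma,s)$ and $0$ otherwise. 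Comparing with the computation above, the two morphisms coincide entry by entry, and the equality $\bfV\otimes\bfB=(\bfV\otimes_0\bfB,\id_{\bfV}\otimes_0\id_{\bfB})=\bfV\otimes_0\bfB$ in the idempotent completion follows immediately.

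The only mildly delicate point, and what I would treat carefully, is bookkeeping in the diagonal case: verifying that the $\calb$-level entry $\id_{\bfV}\otimes_0 \id_{B(s)}$ really lands in the correct summand $B(s)|_{\supp_\calb(B(s))_\sigma}$ matching the definition of $\bfV\otimes_0\bfB$. This is not an obstacle but requires unwinding; once it is checked, Lemma~\ref{lem:id-is-id_special} closes the argument without further work.
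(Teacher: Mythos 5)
Your proposal is correct and takes the same route as the paper: the paper's proof is the one-line remark that ``the proof of Lemma~\ref{lem:id-is-id_special} carries directly over to this more general case,'' and your entrywise reduction via~\eqref{rho_otimes_varphi)_(sigma,s)_upper_(sigma',s'} is precisely how that carryover works. The off-diagonal vanishing and the diagonal application of Lemma~\ref{lem:id-is-id_special} to each $B(s)$ are exactly the right bookkeeping.
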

\begin{proof}
One needs to show  that $\id_{\bfV} \otimes_0 \id_{\bfB}$ is the identity of
$\bfV \otimes_0 \bfB$, not just an idempotent. The proof of Lemma~\ref{lem:id-is-id_special}
carries directly over this more general case. 
\end{proof}

For $E \subseteq \Omega \times \Omega$ and $E' \subseteq \Lambda \times \Lambda$ we use the following convention
\begin{equation} \label{eq:cheated-times}
  E \times E' := \Big\{ \twovec{x',\lambda'}{x,\lambda} \; \Big| \; \twovec{x'}{x} \in E, \twovec{\lambda'}{\lambda} \in E' \Big\}
  \subseteq (\Omega \times \Lambda)^{\times 2}.
\end{equation}

\begin{lemma}\label{lem:properties-diag-tensor}
  \begin{enumerate}
  \item\label{lem:properties-diag-tensor:objects}
   Let $\bfV = (\Sigma,c) \in \cals^G(\Omega)$ and $\bfB = (S,\pi,B) \in \contrCatUcoef{G}{\Lambda}{\calb}$. Then we have
   \begin{enumerate}
   \item\label{lem:lem:properties-diag-tensor:objects:finite} If $\bfV$ and $\bfB$ are finite, i.e.,
     $\Sigma$ and $S$ are finite, then $\bfV \otimes_0 \bfB$ is finite as well;
   	\item\label{lem:properties-diag-tensor:objects:suppobj}
   	      $\suppobj (\bfV \otimes_0 \bfB) = \suppobj \bfV \times \suppobj \bfB$. 
   \end{enumerate}
  \item\label{lem:properties-diag-tensor:morphisms}
   Let $\rho \colon \bfV = (\Sigma,c) \to \bfV' = (\Sigma',c')$ in $\cals^G(\Omega)$,
   $\varphi \colon \bfB = (S,\pi,B) \to \bfB' = (S',\pi',B')$ in $\contrCatUcoef{G}{\Lambda}{\calb}$.  
  for $\rho \otimes_0 \varphi$ in $\contrCatUcoef{G}{\Omega \times \Lambda}{\calb}$ we have
  \begin{enumerate}
  	\item\label{em:properties-diag-tensor:morphisms:suppX} $\suppX (\rho \otimes \varphi) \; \subseteq \; \suppX \rho \times \suppX \varphi$;
  	\item\label{em:properties-diag-tensor:morphismsi:suppG} $\suppG (\rho \otimes \varphi) \; \subseteq \; \suppG \varphi$.
        \end{enumerate}
      \end{enumerate}
    \end{lemma}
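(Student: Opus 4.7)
The plan is to verify each of the four claims by unwinding the explicit definitions, using no deeper input than the axioms of a Hecke category with $G$-support that were already needed to define the pairing~\eqref{rho_otimes_varphi)_(sigma,s)_upper_(sigma',s'}.

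For part~(i), both statements are immediate from the construction. The underlying set of $\bfV \otimes_0 \bfB$ is $\Sigma \times S$ by definition, so (a) is clear. The map used in forming $\suppobj(\bfV \otimes_0 \bfB)$ is the product $c \times \pi \colon \Sigma \times S \to \Omega \times \Lambda$, whose image is $c(\Sigma) \times \pi(S) = \suppobj \bfV \times \suppobj \bfB$, giving (b).

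For part~(ii), the central task is to control each entry $(\rho \otimes \varphi)_{(\sigma,s)}^{(\sigma',s')}$, which equals $(\rho \otimes_0 \varphi_s^{s'})_{\sigma}^{\sigma'}$ by~\eqref{rho_otimes_varphi)_(sigma,s)_upper_(sigma',s'}. By the formula~\eqref{def_of_(rho_otimes_varphi)_sigma_upper_sigmaprime}, this is a sum $\sum_x \rho_{x\sigma}^{\sigma'} \cdot (\varphi_s^{s'})_{\sigma}^{\sigma'}[x]$ indexed over double cosets $x \in \supp(B'(s'))_{\sigma'} \backslash G / \supp(B(s))_{\sigma}$, in which each summand has $\calb$-support contained both in the coset $x$ and in $\supp_\calb(\varphi_s^{s'})$; the latter follows from the factorisation~\eqref{def:of_varphi_sigma_upper_sigma'} and the \emph{Subgroups} axiom, which together give $\supp((\varphi_s^{s'})_{\sigma}^{\sigma'}) \subseteq \supp B'(s') \cdot \supp \varphi_s^{s'} \cdot \supp B(s) = \supp \varphi_s^{s'}$. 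Hence any $g \in \supp_\calb((\rho \otimes \varphi)_{(\sigma,s)}^{(\sigma',s')})$ lies in $\supp_\calb(\varphi_s^{s'})$ and in some double coset $x$ with $\rho_{x\sigma}^{\sigma'} \neq 0$, which by the abuse of notation of~\eqref{rho_(xsigma)_upper_sigma'} means $\rho_{g\sigma}^{\sigma'} \neq 0$.

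The first observation immediately gives (ii)(b), since $\suppG(\rho \otimes \varphi)$ is by definition the union of these $\calb$-supports over all indices $(\sigma,s),(\sigma',s')$. For (ii)(a), a typical element of $\suppX(\rho \otimes \varphi)$ has the form $((c'(\sigma'), \pi'(s')), g(c(\sigma), \pi(s)))$; using $G$-equivariance of $c$ to rewrite $g c(\sigma) = c(g\sigma)$, the two conditions on $g$ just derived show that $(c'(\sigma'), c(g\sigma)) \in \suppX \rho$ and $(\pi'(s'), g\pi(s)) \in \suppX \varphi$, which places the element in $\suppX \rho \times \suppX \varphi$ in the sense of~\eqref{eq:cheated-times}. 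The only obstacle is the bookkeeping switching between the double-coset indexing $\supp(B')_{\sigma'} \backslash G / \supp(B)_{\sigma}$ used by the tensor-product formula and the orbit indexing $G_{\sigma'} \backslash G / G_{\sigma}$ used in the definition of $\rho$; this is cleanly resolved by~\eqref{rho_(xsigma)_upper_sigma'}, after which the lemma is essentially formal.
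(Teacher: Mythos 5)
Your proof is correct and follows essentially the same route as the paper: the paper only spells out (ii)(a) via the same chain of identifications (entry-by-entry, using the double-coset formula for $\rho\otimes_0\varphi_s^{s'}$ and then relating $\supp_\calb$ of the entries to $\supp_2(\rho)$ and $\supp_2(\varphi)$), leaving the other three parts to the reader. Your extra remark spelling out, via \emph{Support cofinality} and the \emph{Subgroups} axiom, why $\supp_\calb\bigl((\varphi_s^{s'})_\sigma^{\sigma'}\bigr)\subseteq\supp_\calb(\varphi_s^{s'})$ is the step the paper uses implicitly in passing from $g\in\supp_\calb(\varphi_\sigma^{\sigma'})$ to $\twovec{\pi'(s')}{g\pi(s)}\in\supp_2(\varphi)$; making it explicit is a small improvement but not a different argument.
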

    \begin{proof} We give the proof only for assertion~\eqref{em:properties-diag-tensor:morphisms:suppX}.
      The elementary proof for the other assertions is left to the reader. 
   By definition we have
  \begin{eqnarray*}
    \suppX(\rho)
    & = &
    \Big\{ \twovec{c'(\sigma')}{c(\sigma)} \,\Big|\, \rho_{\sigma}^{\sigma'}
    \neq 0 \Big\} \subseteq \Omega \times \Omega;
    \\
    \suppX(\varphi)
    &  = &
    \Big\{ \twovec{\pi'(s')}{g\pi(s)} \,\Big|\, s \in S, s' \in S', g \in \supp_G(\varphi_{s}^{s'})\Big\}
           \subseteq \Lambda \times \Lambda;
     \\
    \suppX (\rho \otimes \varphi)
    &  = &
           \Big\{ \twovec{(c'(\sigma'),\pi'(s'))}{g(c(\sigma),\pi(s))} \,\Big| \, \sigma \in \Sigma, \sigma' \in \Sigma', s \in S, s' \in S',
           g \in \supp_{\calb}\bigl((\rho \otimes \varphi)_{s}^{s'}\bigr)\Big\}
    \\
    & & \hspace{60mm}
        \subseteq (\Omega \times \Lambda)^{\times 2}.
  \end{eqnarray*}
 We conclude from~\ref{def_of_(rho_otimes_varphi)_sigma_upper_sigmaprime}
  \begin{eqnarray*}
  \supp_{\calb}(\rho \otimes_0 \varphi_{\sigma}^{\sigma'})
    & \subseteq  &
    \{g \in G \mid \rho_{g\sigma}^{\sigma'} \not= 0,  g \in \supp_{\calb}(\varphi_{\sigma}^{\sigma'})\} 
    \\
    & = &
    \{g \in G \mid \twovec{c'(\sigma')}{c(g\sigma)} \in\supp_2(\rho),  g \in \supp_{\calb}(\varphi_{\sigma}^{\sigma'})\}
    \\
    & = &
          \{g \in G \mid \twovec{c'(\sigma')}{c(g\sigma)} \in\supp_2(\rho),  \twovec{\pi'(s')}{g\pi(s)} \in \supp_2(\varphi)\}.
  \end{eqnarray*}

  Hence we get  
  \begin{eqnarray*}
    \lefteqn{\suppX (\rho \otimes_0 \varphi)}
    & &
    \\
    &  = &
           \Big\{ \twovec{(c'(\sigma'),\pi'(s'))}{(g(c(\sigma),\pi(s)))} \,\Big|\, \sigma \in \Sigma, \sigma' \in \Sigma', s \in S, s' \in S',
           g \in \supp_{\calb}\bigl((\rho \otimes_0 \varphi)_{(\sigma,s)}^{(\sigma',s')}\bigr)\Big\}
    \\
    &  \stackrel{\eqref{rho_otimes_varphi)_(sigma,s)_upper_(sigma',s'}}{=}  &
           \Big\{ \twovec{(c'(\sigma'),\pi'(s'))}{(g(c(\sigma),\pi(s)))} \,\Big|\, \sigma \in \Sigma, \sigma' \in \Sigma', s \in S, s' \in S',
           g \in \supp_{\calb}\bigl(\rho \otimes_0 \varphi_{s}^{s'}\bigr)\Big\}
    \\
    & = &
               \Big\{ \twovec{(c'(\sigma'),\pi'(s'))}{(c(g\sigma),g\pi(s))} \,\Big|\, \sigma \in \Sigma, \sigma' \in \Sigma', s \in S, s' \in S',
             g \in \supp_{\calb}\bigl((\rho \otimes_0 \varphi)_{s}^{s'}\bigr)\Big\}
    \\
    & \subseteq &
    \Big\{ \twovec{(c'(\sigma'),\pi'(s'))}{(c(g\sigma),g\pi(s))} \,\Big|\, \sigma \in \Sigma, \sigma' \in \Sigma', s \in S, s' \in S',
    \\
    & & \hspace{30mm}
             \twovec{c'(\sigma')}{c(g\sigma)} \in\supp_2(\rho),  \twovec{\pi'(s')}{g\pi(s)} \in \supp_2(\varphi) \Big\}.   
  \end{eqnarray*}
 This finishes the proof of Lemma~\ref{lem:properties-diag-tensor}.
\end{proof}


\subsection{Compatibility of the diagonal tensor product with induction and restriction}%
\label{subsec:Compatibility_of_the_diagonal_tensor_product_with_induction_and_restriction}

Let $U$ be an open subgroup of $G$.  Write
$\res_G^U \colon \SSETS{G} \to \SSETS{U}$ for the restriction functor.  Given a smooth  $G$-set $\Omega$, it 
induces a \emph{restriction functor}
\[
  \cals^G(\Omega)  \to  \cals^U(\res_G^U \Omega), \quad 
  (\Sigma,c)  \mapsto  (\res_G^U \Sigma, \res_G^U c),
\]
that we will also denote by $\res_G^U$.

Let $\calb$ be a Hecke category with $G$-suppport in the sense of
Definition~\ref{def:Hecke_categories_with_G-support}. We have defined the full
$\IZ$-linear subcategory $\calb|_U$ in Notation~\ref{not:calb|_H}. Note that $\calb|_U$
inherits from $\calb$ the structure of a Hecke category with $U$-support if we define
$\supp_{\calb|_U}(\varphi) = \supp_{\calb}(\varphi )$ for any morphism $\varphi$ in
$\calb|_U$ and in particular $\supp_{\calb|_U}(B) = \supp_{\calb}(B)$ for any object $B$
in $\calb|_U$. Let $\ind_U^G \colon \calb|_U \to\calb$ be the
canonical inclusion.  It induces an inclusion
$\ind_U^G \colon \calb_U(\res_G^U \Lambda) \to \calb_G(\Lambda)$\footnote{Strictly speaking the should be $(\calb|_U)_U(\res_G^U \Lambda)$} for any smooth $G$-set $\Omega$.  We
write
\begin{eqnarray*}
  \otimes^G  \colon  \cals^G(\Omega) \times \calb_G(\Lambda)
  & \to & \Idem(\calb_G(\Omega \times \Lambda)); \\
  \otimes^U  \colon  \cals^U(\res_G^U \Omega) \times \calb_U(\res_G^U \Lambda)
  & \to & \Idem\bigl(\calb_U(\res_G^U (\Omega \times \Lambda))\bigr),
\end{eqnarray*}
for the diagonal tensor products introduced in~\eqref{final_diagonal_pairing}.

\begin{lemma}\label{lem:diagonal_tensor_product_and_induction_and_restriction}
We get for any $\bfV \in \ob(\cals^G(\Omega))$ and $\bfB \in \ob(\calb_U(\res_G^U \Lambda))$
\[\ind_U^G(\res_G^U \bfV \otimes^U \bfB)
  = \bfV   \otimes^G \ind_U^G \bfB.
\]
Similarly, for morphisms $\rho \colon \bfV \to \bfV'$ in $\cals^G(\Omega)$ and
$\varphi \colon \bfB \to \bfB'$ in $\calb_U(\res_G^U \Lambda)$ we get
\[
  \ind_U^G(\res_G^U \rho \otimes^U \varphi)
  = \rho \otimes^G \ind_U^G \varphi.
\]
\end{lemma}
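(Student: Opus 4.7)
The plan is to verify the two equalities directly from the definitions, exploiting that for objects and morphisms living in $\calb|_U$ all the relevant supports lie in $U$, so every bit of data arising in the $G$-level construction already coincides with the corresponding $U$-level data.

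First I would handle the object equality. Unfolding $\otimes_0$, the right-hand side $\bfV \otimes^G_0 \ind_U^G \bfB$ is $(\Sigma \times S,\, c \times \pi,\, (\sigma,s) \mapsto B(s)|_{\supp_\calb(B(s))_\sigma})$, while the left-hand side $\ind_U^G(\res_G^U \bfV \otimes^U_0 \bfB)$ is $(\Sigma \times S,\, c \times \pi,\, (\sigma,s) \mapsto B(s)|_{\supp_{\calb|_U}(B(s))_\sigma})$, where in the latter the isotropy is taken inside $U$. The key observation is that for $B(s) \in \calb|_U$ we have $\supp_{\calb|_U}(B(s)) = \supp_\calb(B(s)) \subseteq U$, so
\[
\supp_\calb(B(s)) \cap G_\sigma = \supp_\calb(B(s)) \cap (U \cap G_\sigma) = \supp_{\calb|_U}(B(s)) \cap U_\sigma,
\]
and hence the two objects agree on the nose. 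The induction of $\otimes^U_0$ afterwards does not alter the triple.

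Next I would establish the morphism equality by comparing the matrix entries at each $((\sigma,s),(\sigma',s'))$. By \eqref{rho_otimes_varphi)_(sigma,s)_upper_(sigma',s'} both sides reduce to showing that $(\rho \otimes_0 \varphi_s^{s'})_\sigma^{\sigma'}$ computed in the $G$-setting with $\ind_U^G \varphi_s^{s'}$ coincides with the one computed in the $U$-setting using $\res_G^U \rho$ and $\varphi_s^{s'}$. Unfolding \eqref{def_of_(rho_otimes_varphi)_sigma_upper_sigmaprime}, the $G$-sum runs over $\supp_\calb(B'(s'))_{\sigma'} \backslash G / \supp_\calb(B(s))_\sigma$, and the $U$-sum over $\supp_{\calb|_U}(B'(s'))_{\sigma'} \backslash U / \supp_{\calb|_U}(B(s))_\sigma$. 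Because $\varphi_s^{s'}$ is a morphism in $\calb|_U$, the axiom \emph{Subgroups} gives $\supp_\calb(\varphi_s^{s'}) = \supp_\calb(B'(s')) \cdot \supp_\calb(\varphi_s^{s'}) \cdot \supp_\calb(B(s)) \subseteq U$; so the decomposition of $\varphi_s^{s'}$ via \emph{Morphism Additivity} only has non-zero summands $\varphi_s^{s'}[x]$ for double cosets $x$ intersecting $U$, i.e.\ contained in $U$ (as both flanking subgroups lie in $U$). Thus the $G$-sum and the $U$-sum are indexed by the same set and share the same summands $\varphi_s^{s'}[x]$, with identical multiplicities $\rho_{x\sigma}^{\sigma'}$ since these integers are defined from the matrix entries of $\rho$, which are unchanged by restriction.

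The only point requiring genuine care is the invariance of $\varphi_s^{s'}[x]$ under the two computations, but this follows from the uniqueness assertion of Lemma~\ref{lem:uniqueness_in_morphisms_additivity}\ref{lem:uniqueness_in_morphisms_additivity:axiom}, applied in $\calb$: the $U$-decomposition of $\varphi_s^{s'}$, viewed in $\calb$ via $\ind_U^G$, satisfies all the support conditions of the $G$-decomposition, hence must coincide with it. With this in hand both identities are simply an identification of index sets and summands. I expect no serious obstacle; the whole proof is essentially a bookkeeping check that $\supp \subseteq U$ reduces the $G$-formulas to their $U$-versions, whence the word \emph{elementary} in the paper's framing is apt.
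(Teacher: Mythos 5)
Your proof is correct and follows essentially the same argument as the paper: reduce to the object and morphism statements for $\otimes_0$, use $\supp_\calb(B)\subseteq U$ to identify $\supp(B)\cap U_\sigma=\supp(B)\cap G_\sigma$, and observe via the uniqueness in Lemma~\ref{lem:uniqueness_in_morphisms_additivity} that the nonzero summands of the $G$-double-coset decomposition are exactly those coming from $U$-double cosets. The only presentational difference is that the paper reduces explicitly to the special case $\Lambda=G/G$ before carrying out this computation, calling the general case an obvious generalization, whereas you keep $\Lambda$ general throughout.
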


\begin{proof}
  We give the proof only in the special case where $\Lambda$ is $G/G$, in other words,
  $\calb_G(\Lambda) = \calb$ and   $\calb_U(\res_G^U \Lambda) = \calb|_U$. The proof for the general case
  is then an obvious generalization.

  Consider the tensor products
  \begin{eqnarray*}
  \otimes_0^G  \colon  \cals^G(\Omega) \times \calb
  & \to & \calb_G(\Omega); \\
  \otimes_0^U  \colon  \cals^U(\res_G^U \Omega) \times \calb|_U
  & \to & \calb_U(\res_G^U \Omega),
  \end{eqnarray*}
  introduced in~\eqref{bfV_otimes_0_B}  and~\eqref{rho_otimes_0_varphi}. We obtain
  for  $\bfV \in \ob(\cals^G(\Omega))$ and $B \in \ob(\calb|_U)$  
  \begin{equation}
  \ind_U^G(\res_G^U \bfV \otimes_0^U B)
  = \bfV   \otimes_0^G \ind_U^G B,
  \label{lem:diagonal_tensor_product_and_induction_and_restriction:(1)}
  \end{equation}
as  we have by definition
$\res_G \bfV \otimes_0^U B = (\Sigma,c,\sigma \mapsto B|_{\supp(B) \cap U_{\sigma}})$
and  $\bfV   \otimes_0^G \ind_U^G B = (\Sigma,c,\sigma \mapsto B|_{\supp(B) \cap G_{\sigma}})$
and $\supp_{\calb}(B) \subseteq U$ implies $\supp(B) \cap U_{\sigma} = \supp(B) \cap G_{\sigma}$.
Given morphisms $\rho \colon \bfV \to \bfV'$ in $\cals^G(\Omega)$ 
and $\varphi \colon B \to B'$ in $\calb|_U$, we next show
\begin{equation}
  \ind_U^G(\res_G^U \rho \otimes_0^U \varphi)
  = \rho    \otimes_0^G \ind_U^G\varphi.
  \label{lem:diagonal_tensor_product_and_induction_and_restriction:(2)}
\end{equation}
We have by~\eqref{def_of_(rho_otimes_varphi)_sigma_upper_sigmaprime}
\begin{eqnarray}
\quad (\res_G^U \rho \otimes_0 \varphi)_{\sigma}^{\sigma'}
& = &
\sum_{x \in (\supp(B') \cap U_{\sigma'})\backslash U / \supp(B) \cap U_{\sigma}}
  \rho_{x\sigma}^{\sigma'} \cdot \varphi_{\sigma}^{\sigma'}[x];
  \label{lem:diagonal_tensor_product_and_induction_and_restriction:(3a)}
  \\
  (\rho \otimes_0 \id_U^G\varphi)_{\sigma}^{\sigma'}
  & = &
  \sum_{y \in (\supp(B')\cap G_{\sigma'})\backslash G / (\supp(B)\cap G_{\sigma})}
  \rho_{y\sigma}^{\sigma'} \cdot (\ind_U^G\varphi)_{\sigma}^{\sigma'}[y],
  \label{lem:diagonal_tensor_product_and_induction_and_restriction:(3b)}
\end{eqnarray}
where the morphisms $\varphi_{\sigma}^{\sigma'}$ in $\calb_U(\res_G^U \Omega)$ and
$(\ind_U^G \varphi)_{\sigma}^{\sigma'}$ in $\calb_G(\Omega)$ are defined by
\begin{equation}
  \varphi_{\sigma}^{\sigma'} \colon B|_{\supp(B) \cap U_{\sigma}}
  \xrightarrow{r_{B,\supp(B)\cap U_{\sigma}}} B
  \xrightarrow{\varphi} B' \xrightarrow{i_{B',\supp(B')\cap U_{\sigma'}}} B'|_{\supp(B') \cap U_{\sigma'}}
\label{lem:diagonal_tensor_product_and_induction_and_restriction:(4a)}
\end{equation}
and
\begin{equation}
  (\ind_U^G\varphi)_{\sigma}^{\sigma'} \colon B|_{\supp(B) \cap G_{\sigma}}
  \xrightarrow{r_{B,\supp(B)\cap G_{\sigma}}} B
  \xrightarrow{\ind_U^G\varphi} B' \xrightarrow{i_{B',\supp(B')\cap U_{\sigma'}}} B'|_{\supp(B') \cap U_{\sigma'}}
\label{lem:diagonal_tensor_product_and_induction_and_restriction:(4b)}
\end{equation}
and  the morphisms
$\varphi_{\sigma}^{\sigma'}[x]\colon B|_{\supp(B) \cap U_{\sigma}} \to B'|_{\supp(B') \cap U_{\sigma'}}$
in  $\calb_U(\res_G^U \Omega)$ and
$(\ind_U^G\varphi)_{\sigma}^{\sigma'}[y] \colon B|_{\supp(B) \cap G_{\sigma}} \to B'|_{\supp(B') \cap U_{\sigma'}}$
in $\calb_G(\Omega)$ are uniquely determined  by 
\begin{eqnarray*}
  \varphi_{\sigma}^{\sigma'}
 & = &
 \sum_{x \in (\supp(B') \cap U_{\sigma'})\backslash U / \supp(B) \cap U_{\sigma}} \varphi_{\sigma}^{\sigma'}[x];
  \\
  \supp(\varphi_{\sigma}^{\sigma'}[x])
  & = &
  \supp(\varphi_{\sigma}^{\sigma'}) \cap x;
  \\
  (\ind_U^G\varphi)_{\sigma}^{\sigma'}
  & = &
 \sum_{y \in (\supp(B')\cap G_{\sigma'})\backslash G / (\supp(B)\cap G_{\sigma})}
(\ind_U^G\varphi)_{\sigma}^{\sigma'}[y];
  \\
  \supp\bigl((\ind_U^G\varphi)_{\sigma}^{\sigma'}[y] \bigr)
  & = &
 \supp\bigl((\ind_U^G\varphi)_{\sigma}^{\sigma'}\bigr) \cap y.
\end{eqnarray*}
As $\supp(B)$ and $\supp(B')$ are contained in $U$, we get
$\supp(B) \cap U_{\sigma} = \supp(B) \cap G_{\sigma}$ and
$\supp(B') \cap U_{\sigma'} = \supp(B') \cap G_{\sigma'}$.  We conclude
from~\eqref{lem:diagonal_tensor_product_and_induction_and_restriction:(3a)}
and~\eqref{lem:diagonal_tensor_product_and_induction_and_restriction:(3b)} that the
morphism $\varphi_{\sigma}^{\sigma'}$ and $(\ind_U^G\varphi)_{\sigma}^{\sigma'}$ in
$\calb$ agree.  Note that the inclusion of $U \subseteq G$ induces an inclusion
\[
(\supp(B') \cap U_{\sigma'})\backslash U / \supp(B) \cap U_{\sigma} \subseteq 
(\supp(B') \cap G_{\sigma'})\backslash G / (\supp(B)\cap G_{\sigma}).
\]
Since the support of
$\varphi$ is contained in $U$ and hence the support of
$\varphi_{\sigma}^{\sigma'} = (\ind_U^G\varphi)_{\sigma}^{\sigma'}$ is contained in $U$, we
conclude
\[
  (\ind_U^G\varphi)_{\sigma}^{\sigma'}[y]
  = 
  \begin{cases}
    \varphi_{\sigma}^{\sigma'}[x] & \text{if}\; y \in (\supp(B') \cap U_{\sigma'})\backslash U / \supp(B) \cap U_{\sigma};
    \\
    0 & \text{otherwise.}
  \end{cases}
\]
Now~\eqref{lem:diagonal_tensor_product_and_induction_and_restriction:(2)}
  follows from~\eqref{lem:diagonal_tensor_product_and_induction_and_restriction:(3a)}
  and~\eqref{lem:diagonal_tensor_product_and_induction_and_restriction:(3b)}.
  Hence Lemma~\ref{lem:diagonal_tensor_product_and_induction_and_restriction}
  follows from~\eqref{lem:diagonal_tensor_product_and_induction_and_restriction:(1)}
and~\eqref{lem:diagonal_tensor_product_and_induction_and_restriction:(2)} in the special case $\Lambda = G/G$.
\end{proof}

\subsection{Flatness}%
\label{subsec:Flatness}

Consider a Hecke category $\calb$ with $G$-support in the sense of
Definition~\ref{def:Hecke_categories_with_G-support}.

  We extend the notion of the support for $\calb$ to $\calb_{\oplus}$ as follows.
  The support $\supp_{\calb_{\oplus}}(\underline{B})$ of an object $\underline{B} = (B_1, B_2, \ldots, B_n)$
  is defined to be $\bigcup_{i = 1}^n \supp_{\calb}(B_i)$ and for $g \in G$ we put
  $g\underline{B} = (gB_1, gB_2, \ldots, gB_n)$.
  For a morphism
  $\underline{\varphi} = (\varphi_{i,j}) \colon \underline{B} \to \underline{B'}$ its support
  $\supp_{\calb_{\oplus}}(\underline{\varphi})$
  is defined to be $\bigcup_{i,j} \supp_G(\varphi_{i,j})$.
  Note that $\supp_{\calb_{\oplus}}(\underline{B})$ is not a subgroup anymore.
  One easily checks  that the conditions appearing in
  Definition~\ref{def:Hecke_categories_with_G-support}  are satisfied
  except the conditions~\ref{def:Hecke_categories_with_G-support:subgr} 
and~\ref{def:Hecke_categories_with_G-support:Morphisms-Addivity}.
So $\supp_{\calb_{\oplus}}(\underline{B})$ is not a  Hecke category with $G$-support.
  We have for any two objects $\underline{B}$ and $\underline{B'}$ and any two morphisms
  $\varphi$ and $\varphi'$
  \begin{eqnarray*}
    \supp(\underline{B} \oplus \underline{B'})
    & = & \supp(\underline{B}) \cup  \supp(\underline{B'});
    \\
    \supp(\underline{\varphi} \oplus \underline{\varphi'})
    & = & \supp(\underline{\varphi}) \cup  \supp(\underline{\varphi'}).
  \end{eqnarray*}

  Recall that a sequence $A_0 \xrightarrow{u} A_1 \xrightarrow{v} A_2$ in an additive
  category $\cala$ is \emph{exact} if for any object $A$ in $\cala$ the induced sequence of
  abelian groups is
  $\mor_{\cala}(A,A_0) \xrightarrow{u_*} \mor_{\cala}(A,A_1) \xrightarrow{v_*}
  \mor_{\cala}(A,A_2)$ is exact.

  \begin{lemma}\label{lem:flatness_on_the_calb_level}
    For any two open subgroups $U \subseteq V$ of $G$ and any $n \in \IZ$ the inclusion
    $\calb|_U \to \calb|_V$ induces a functor of additive categories
    \[
    (\calb_U)_\oplus[\IZ^d] \to (\calb|_V)_\oplus[\IZ^d]
  \]
  that is exact.
\end{lemma}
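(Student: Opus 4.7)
The plan is to prove exactness by reducing to a component-wise statement and then using a decomposition by $U$-$U$ double cosets.

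First, I would reduce to showing that the inclusion $\calb|_U \hookrightarrow \calb|_V$ preserves exactness of $3$-term sequences. The functor $\cala \mapsto \cala_\oplus$ preserves exactness because Hom groups in the additive hull split as direct sums of those in $\cala$, so exactness for tuple test objects decomposes into exactness at each component. The functor $\cala \mapsto \cala[\IZ^d]$ likewise preserves exactness because its Hom groups are obtained from those in $\cala$ by tensoring (as abelian groups) with the flat $\IZ$-module $\IZ[\IZ^d]$, and composition is given by Laurent-polynomial multiplication that respects this structure.

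Given an exact sequence $A_0 \xrightarrow{u} A_1 \xrightarrow{v} A_2$ in $(\calb|_U)_\oplus$, consider $X \in (\calb|_V)_\oplus$ and $\varphi \colon X \to A_1$ in $(\calb|_V)_\oplus$ satisfying $v\varphi = 0$. I would produce a lift $\psi \colon X \to A_0$ as follows. First, using \emph{Support cofinality} componentwise on $X = (X_1, \ldots, X_m)$, observe that $\supp(X_i) \cap U$ is open in the compact group $\supp(X_i)$ and therefore has finite index; so each $X_i$ is a retract of $X_i|_{\supp(X_i) \cap U}$ in $\calb|_V$, and hence $X$ is a retract in $(\calb|_V)_\oplus$ of some $X' \in (\calb|_U)_\oplus$. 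Lifting $\varphi \circ r_{X,X'} \colon X' \to A_1$ and composing with $i_{X,X'}$ yields a lift of $\varphi$, so one may assume $X \in (\calb|_U)_\oplus$.

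Next I would decompose $\varphi$ by $U$-$U$ double cosets in $V$. The support $\supp(\varphi) \subseteq V$ is compact and satisfies $\supp(A_1) \cdot \supp(\varphi) \cdot \supp(X) = \supp(\varphi)$, so it meets only finitely many double cosets $UhU$, each of which is invariant under the relevant left and right multiplications (since $\supp(A_1), \supp(X) \subseteq U$). By \emph{Morphism Additivity}, $\varphi = \sum_d \varphi_d$ with $\supp(\varphi_d) \subseteq d = UhU$. The support of $v \varphi_d$ lies in $\supp(v) \cdot d \subseteq U \cdot d = d$, and the disjointness of distinct $U$-$U$ double cosets together with Lemma~\ref{lem:uniqueness_in_morphisms_additivity} forces $v \varphi_d = 0$ for each $d$. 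It thus suffices to construct, for each $d$, a morphism $\psi_d \colon X \to A_0$ in $\calb|_V$ with $\supp(\psi_d) \subseteq d$ and $u \psi_d = \varphi_d$; then $\psi = \sum_d \psi_d$ is well-defined in $(\calb|_V)_\oplus$ and lifts $\varphi$.

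For the trivial double coset $d = U$, the morphism $\varphi_U$ already has support in $U$, is a morphism in $(\calb|_U)_\oplus$, and is lifted by the exactness hypothesis. The hard part, which I expect to be the main obstacle, is the lifting of $\varphi_d$ for non-trivial double cosets $d = UhU$ with $h \in V \setminus U$. Here the plan is to apply \emph{Translation} to $X$ and to $A_0, A_1, A_2$ with elements related to $h$, and then use \emph{Support cofinality} to restrict the translated objects to intersections with $U$, thereby reformulating the lifting of $\varphi_d$ as a lifting problem between objects of $(\calb|_U)_\oplus$ to be solved by the exactness hypothesis; the solution is then transferred back through the inverses of the translation isomorphisms and the restriction retracts. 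The subtlety is that a translation by $h \in V \setminus U$ does not in general carry $\calb|_U$ into itself (since $hUh^{-1} \not\subseteq U$), so one must carefully interleave translation and restriction and perform a delicate bookkeeping of supports to guarantee that the reconstructed $\psi_d$ indeed has support contained in $UhU$. This support bookkeeping is the principal technical hurdle.
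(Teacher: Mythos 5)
Your plan has the right skeleton (reduce to a $3$-term lifting problem, decompose the test morphism by double cosets using Morphism Additivity, then use Translation to move the remaining piece into $\calb|_U$), but you stop short of the one idea that makes the final step work — and you explicitly flag that step as an unsolved "principal technical hurdle." That hurdle is precisely where the paper's proof has its content, so the proposal is genuinely incomplete.

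Two concrete issues. First, you propose to apply Translation to $A_0,A_1,A_2$ as well as to $X$. That cannot work as stated: the exactness hypothesis holds only in $(\calb|_U)_\oplus[\IZ^d]$, and a translation by $h\in V\setminus U$ carries the sequence out of $\calb|_U$, so the hypothesis no longer applies to the translated sequence. The paper leaves the exact sequence untouched and translates only the test object. Second, and more fundamentally, your initial reduction — replacing the test object by one with support in $U$ via Support cofinality — is not strong enough. After that reduction you decompose by $U$-$U$ double cosets $UhU$ and then want to translate the test object by $h$; but $h\supp(X')h^{-1}$ need not lie in $U$, so the translated test object is still not in $\calb|_U$, which is exactly the obstruction you identified without resolving. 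The paper's trick is to postpone and sharpen the shrinking: set $M:=\supp(\underline{\psi})\cdot\supp(\underline{B})$ (a compact subset of $V$), let $U_0:=\bigcap_{g\in M}g^{-1}Ug$ (an open subgroup of $U$ with $gU_0g^{-1}\subseteq U$ for every $g\in M$), and only then apply Support cofinality to replace the test object by one with support contained in $U_0$. One then decomposes by $U$-$U_0$ double cosets, which for $g\in M$ satisfy $UgU_0=Ug$, and Translation by $g$ produces a test object with support $g\supp(\underline{B'})g^{-1}\subseteq gU_0g^{-1}\subseteq U$; the whole lifting problem now lies inside $(\calb|_U)_\oplus[\IZ^d]$ and one composes the resulting lift with the inverse of the translation isomorphism (a morphism in $\calb|_V$, which is all that is needed). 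Without choosing $U_0$ in terms of $M$ before invoking Support cofinality, the support bookkeeping you were worried about really does break down, so the gap is not merely technical housekeeping but the absence of the key construction.
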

\begin{proof}
 Recall that the objects of $(\calb|_U)_\oplus[\IZ^d]$
are the objects of $(\calb|_U)_\oplus$ and hence the support of an object $\underline{B}$ of
$(\calb|_U)_\oplus[\IZ^d]$ is defined. For a morphism
  $\varphi = \sum_{x \in \IZ^d} \varphi_x \cdot x$ in
  $(\calb|_U)_\oplus[\IZ^d]$ we set
  $\supp(\varphi) = \bigcup_{x \in \IZ^n} \supp(\varphi_x)$. 
    
  Let $\underline{A} \xrightarrow{\underline{\varphi}} \underline{A'} \xrightarrow{\underline{\varphi'}} \underline{A''}$
  be a sequence in
  $(\calb|_U)_\oplus[\IZ^d]$ that is exact at $\underline{A'} $.  Let
  $\underline{\psi} \colon \underline{B}  \to \underline{A'} $ be a morphism in 
  $(\calb|_V)_\oplus[\IZ^d]$ with $\underline{\varphi'} \circ \underline{\psi} = 0$.
  We need to find a lift $\widehat{\underline{\psi}} \colon \underline{B}  \to \underline{A}$ in
   $(\calb|_V)_\oplus[\IZ^d]$ with
  $\underline{\psi} = \underline{\varphi} \circ \widehat{\underline{\psi}}$.
  \begin{equation*}
    \xymatrix{\underline{A} \ar[r]^{\underline{\varphi}} & \underline{A'} \ar[r]^{\underline{\varphi'}} & \underline{A''} \\
      & \underline{B} \ar@{-->}[ul]^{\widehat{\underline{\psi}}} \ar[u]_{\underline{\psi}} \ar[ur]_{0}
    }
  \end{equation*}
  Let $M := \supp(\underline{\psi}) \cdot \supp(\underline{B})$.  This is a compact subset
  of $V$.  Then $U_0 = \bigcap_{g \in M} g^{-1}Ug$ is an open subgroup  of $U$ such that $gU_0g^{-1} \subseteq U$
  holds for all $g \in M$.  Because of the property \emph{Support cofinality}
  we find
  $\underline{B} \xrightarrow{\underline{i}} \underline{B'} \xrightarrow{\underline{r}}
  \underline{B}$ in $(\calb|_V)_\oplus[\IZ^d]$ such that
$\underline{r} \circ \underline{i} = \id_{\underline{B}}$,
$\supp(\underline{r}) = \supp(\underline{i}) = \supp(\underline{B})$ and
$\supp(\underline{B'}) \subseteq U_0$ hold.  Put
$\underline{\psi'} := \underline{\psi} \circ \underline{r}$.  Then
$\supp(\underline{\psi'}) = \supp(\underline{\psi} \circ \underline{r}) \subseteq \supp(\underline{\psi}) \cdot
\supp(\underline{B}) = M$.  It will suffice to find
$\widehat{\underline{\psi'}} \colon \underline{B'} \to \underline{A}$ such that
$\varphi \circ \widehat \psi' = \psi'$ because then we can set
$\widehat{\underline{\psi}} := \widehat{\underline{\psi'}} \circ \underline{i}$.

We can write 
\begin{eqnarray*}
  \underline{\psi'}
  & = &
        \sum_{UgU_0 \in U \backslash V / U_0} \underline{\psi'}_{UgU_0};
  \\
  \underline{\varphi'} \circ \underline{\psi'}
  & = &
        \sum_{UgU_0 \in U \backslash V / U_0} (\varphi \circ \underline{\psi'})_{UgU_0},
\end{eqnarray*}
where $\underline{\psi'}_{UgU_0}$ is a morphism $\underline{B'} \to \underline{A}$ with
$\supp(\underline{\psi'}_{UgU_0}) \subseteq UgU_0$ and
$(\underline{\varphi'} \circ \underline{\psi'})_{UgU_0}$ is a morphism
$\underline{B'} \to \underline{A}'$ with
$\supp(\underline{\varphi'} \circ \underline{\psi'})_{UgU_0} \subseteq UgU_0$.
We give the argument only for $\underline{\psi'}$, the one for $\underline{\varphi'} \circ \underline{\psi'}$
is analogous.

Write  $\supp(\underline{B'})= (B_1', \ldots, B_m')$ and $\underline{A'} = (A_1', \ldots A'_{n})$. Fix
$UgU_0 \in U \backslash V / U_0$, $i \in \{1, \ldots, m\}$, and $j \in \{1, \ldots n\}$.
Since $\supp(\underline{B'}) \subseteq U_0$  and $\supp(\underline{A'}) \subseteq U$ holds,
we get $ \supp(A'_i)UgU_0\supp(B'_j) = UgU_0$. 
As $\supp(A'_j) \supp(\psi'_{i,j}) \supp(B'_i) = \supp(\psi'_{i,j})$ holds, we conclude
$\supp(A'_j) \bigl(\supp(\psi'_{i,j})  \cap UgU_0\bigr) \supp(B'_i) = \supp(\psi'_{i,j})  \cap  UgU_0$.
Obviously $\supp(\psi'_{i,j})  = \coprod_{UgU_0 \in U \backslash V / U_0} \bigl(\supp(\psi'_{i,j})  \cap UgU_0\bigr)$.
\emph{Morphism Additivity} implies  that we can write
$\psi'_{i,j} = \sum_{UgU_0 \in U \backslash V / U_0} (\psi'_{i,j})_{UgU_0}$
for morphisms $(\psi'_{i,j})_{UgU_0} \colon B'_i \to A_j'$ with $\supp(\psi'_{i,j})_{UgU_0} \subseteq UgU_0$.
Now define $\psi'_{UgU_0}  \colon \underline{B'} \to \underline{A'}$ by the collection of the morphisms
$(\psi'_{i,j})_{UgU_0}$.

Since $\underline{\varphi'} \circ \underline{\psi'} = 0$, we conclude from
Lemma~\ref{lem:uniqueness_in_morphisms_additivity}~\ref{lem:uniqueness_in_morphisms_additivity:general}
that $(\varphi \circ \underline{\psi'})_{UgU_0} = 0$ holds for all
$UgU_0 \in U \backslash V / U_0$.
Lemma~\ref{lem:uniqueness_in_morphisms_additivity}~\ref{lem:uniqueness_in_morphisms_additivity:general}
implies that
$(\underline{\varphi'} \circ \underline{\psi'})_{UgU_0} = \underline{\varphi'} \circ
\underline{\psi'}_{UgU_0}$, since $\supp(\underline{\varphi'}) \subseteq U$. Hence we get
$\underline{\varphi'} \circ \underline{\psi'}_{UgU_0} = 0$ for all $g$.  This allows us to
assume without loss of generality that $\supp_G(\underline{\psi'} )\subseteq UgU_0$ for
some $g \in V$.

As $gU_0g^{-1} \subseteq U$ we have $UgU_0 = Ug$.  From \emph{Translation} we obtain an
object $\underline{B''}$ and an isomorphism
$\underline{f} \colon \underline{B''} \xrightarrow{\cong} \underline{B'}$ satisfying
$\supp(\underline{B''}) = g\supp(\underline{B'})g^{-1}$ and
$\supp(\underline{f}) \subseteq g^{-1}\supp(\underline{B''})$.  Since
$\supp(\underline{B'}) \subseteq U_0$, we have
\[
\supp(\underline{B''}) = g\supp(\underline{B'})g^{-1} \subseteq g U_0g  = U.
\]
We have
\[
  \supp(\underline{\psi'} \circ \underline{f})
  \subseteq \supp(\underline{\psi'}) \cdot \supp(\underline{f})
  \subseteq UgU_0g^{-1}\supp(\underline{B''})
  \subseteq Ugg^{-1}U
   = U.
\]
This implies
$(\underline{\psi'} \circ \underline{f}) \in (\calb|_U)_\oplus[\IZ^d] $ and we can
  apply the exactness in $(\calb|_U)_{\oplus}[\IZ^d] $ to
  $\underline{\psi'} \circ \underline{f}$ to obtain $\widetilde{\underline{\psi}} \colon \underline{B''} \to \underline{A}$ with
  $\underline{\psi'} \circ \underline{f} = \varphi \circ \widetilde{\underline{\psi}} $.  Now with
  $\widehat{\underline{\psi'}} := \widetilde{\underline{\psi}}  \circ \underline{f}^{-1}$ we get
  \[\varphi \circ \widehat{\underline{\psi'}} = \varphi \circ \widetilde{\underline{\psi}}\circ \underline{f}^{-1}
    = \underline{\psi'} \circ \underline{f}  \circ \underline{f}^{-1} = \underline{\psi'}
  \]
  as required.
\end{proof}


\addcontentsline{toc<<}{section}{References}



\end{document}